\theoremstyle{definition}
\newtheorem{thm}{Theorem}[section]
\newtheorem{exm}[thm]{Example}
\newtheorem{defi}[thm]{Definition}
\newtheorem{lemm}[thm]{Lemma}
\newtheorem{rem}[thm]{Remark}
\newtheorem{prop}[thm]{Proposition}
\newtheorem{ques}[thm]{Question}
\theoremstyle{remark}
\DeclareMathOperator{\dist}{dist}
\DeclareMathOperator{\lip}{lip}
\newcommand{\apmd}[2][]{															%Approximate Metric Derivative with options
	\ifthenelse{\equal{#1}{}}%
					{ \operatorname{N}_{#2}	}%
					{ \operatorname{N}_{#1}(#2) 	}}
	\title{Bi-Lipschitz embedding metric triangles in the plane}
    \author{Xinyuan Luo}
    \address{Department of Mathematical Sciences, Stevens Institute of Technology, Hoboken, NJ 07030, USA}
    \email{xluo15@stevens.edu}
    \author{Matthew Romney}
    \address{Department of Mathematics, University of Hawaii at Manoa, Honolulu, Hawaii 96822, USA.}
    \email{mromney@hawaii.edu} 
    \author{Alexandria L. Tao} 
    \address{Department of Mathematical Sciences, Stevens Institute of Technology, Hoboken, NJ 07030, USA.}
    \email{atao@stevens.edu} 
\begin{document}
 	\maketitle

    \begin{abstract}
        A \textit{metric polygon} is a metric space comprised of a finite number of closed intervals joined cyclically. The second-named author and Ntalampekos recently found a method to bi-Lipschitz embed an arbitrary metric triangle in the Euclidean plane with uniformly bounded distortion, which we call here the \textit{tripodal embedding}. In this paper, we prove the sharp distortion bound $4\sqrt{7/3}$ for the tripodal embedding. We also give a detailed analysis of four representative examples of metric triangles: the intrinsic circle, the three-petal rose, tripods and the twisted heart. In particular, our examples show the sharpness of the tripodal embedding distortion bound and give a lower bound for the optimal distortion bound in general. Finally, we show the triangle embedding theorem does not generalize to metric quadrilaterals by giving a family of examples of metric quadrilaterals that are not bi-Lipschitz embeddable in the plane with uniform distortion.
    \end{abstract}
	
    \let\thefootnote\relax\footnotetext{
    \textit{2020 Mathematics Subject Classification.} 51F30, 30L05.}

    \section{Introduction}

    The \textit{bi-Lipschitz embedding problem} asks one to give an intrinsic characterization of those metric spaces that can be embedded in some Euclidean space under a bi-Lipschitz map. Moreover, if a metric space does embed in some Euclidean space, one may then attempt to find or estimate the optimal target dimension and bi-Lipschitz distortion. The bi-Lipschitz embedding problem is wide open in full generality and has produced a rich body of research from a variety of mathematical points of view. We refer the reader to \cite{Hei:03}, \cite{LP:01}, \cite[Chapter 15]{Mat:02}, \cite{NN:12}, \cite{Ost:13} and \cite{Sem:99} for general background.

    In this article, we study a specific class of metric spaces, namely, \textit{metric polygons}, and their embeddings in Euclidean spaces, with a focus on \textit{metric triangles}. We say that a metric polygon is a metric space comprised of finitely many closed intervals (i.e., metric spaces each isometric to $[0,a]$ for some $a>0$), called edges, joined in cyclic fashion. For two points lying on different edges, there is no constraint on the distance between them beyond what follows from the triangle inequality. A metric polygon with three edges is called a metric triangle, with four edges is called a metric quadrilateral, and so forth. See \Cref{defi:polygon} below for a formal definition. 

    To our knowledge, metric polygons first appear explicitly in the recent paper \cite{NR:23}, in which it is shown that any metric triangle embeds in $\mathbb{R}^2$ (equipped with the Euclidean metric) under a bi-Lipschitz map with distortion at most $12$.  The fact is then applied to the problem of finding good polyhedral approximations of metric surfaces under minimal geometric assumptions. We refer to the embedding found in \cite{NR:23} as the \textit{tripodal embedding}; it will be reviewed in \Cref{sec:embedding} below. Our goal is to explore the topic of embedding metric polygons more systematically and extend the embedding result from \cite{NR:23} in several directions. 

    As our main objective, we find the sharp bound on the distortion of the tripodal embedding in the general case. 

    \begin{thm} \label{thm:embedding}
        Every metric triangle embeds in $\mathbb{R}^2$ under the tripodal embedding with distortion at most $D = 4\sqrt{7/3} \approx 6.11$.
    \end{thm}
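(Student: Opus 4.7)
The plan is to first recall from \Cref{sec:embedding} the explicit construction of the tripodal embedding $\Phi : T \to \mathbb{R}^2$, which maps a metric triangle $T$ by identifying a distinguished ``tripod point'' that splits $T$ into three geodesic legs and then sending each of the resulting pieces isometrically onto a segment in the plane emanating from a common base point at prescribed angles.

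For the upper Lipschitz bound, I would verify that for any two points on the \emph{same} tripod leg the map preserves distance exactly, so any expansion of $\Phi$ comes only from pairs on distinct legs; these are controlled by comparing their Euclidean distance to the corresponding path through the base point and then invoking the triangle inequality in $T$. After normalizing the embedding appropriately, one obtains $|\Phi(x) - \Phi(y)| \leq d_T(x,y)$ directly, so the entire distortion bound $D$ will then have to be absorbed into the lower Lipschitz constant.

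The main effort lies in the lower bound. The case where $x, y$ lie on a common leg is trivial. Otherwise, after relabeling, we may write
\[
\Phi(x) = o + s u_i, \qquad \Phi(y) = o + t u_j,
\]
where $o$ is the image of the tripod point, $u_i, u_j$ are unit vectors making an angle $\theta_{ij}$, and $s, t \ge 0$. The Euclidean distance is $\sqrt{s^2 + t^2 - 2st\cos\theta_{ij}}$, while $d_T(x,y)$ is the minimum of candidate path lengths through the base point or around via the third leg and the vertices of $T$. The ratio $d_T(x,y) / |\Phi(x) - \Phi(y)|$ must then be maximized over all admissible $s, t$, all angle triples $(\theta_{12}, \theta_{23}, \theta_{31})$ compatible with the tripodal construction, and all cross-vertex distances in $T$.

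The main obstacle is carrying out this optimization sharply. I expect the extremizer to be a nearly degenerate configuration --- with one leg short, one of the angles $\theta_{ij}$ close to $\pi$, and $x, y$ chosen at optimal depth along adjacent legs --- and the critical point analysis should reduce, after a careful change of variables that normalizes the leg lengths, to a single algebraic relation from which the constant $D = 4\sqrt{7/3}$ emerges. To control the competing path through the third leg, one key inequality will be a quantitative estimate ensuring that if this rival path is shorter than the direct path through $o$, then the Euclidean distance is already of order at least $d_T(x,y)/D$; otherwise $d_T(x,y) = s + t$ and the problem reduces to planar trigonometry. The sharpness of $D$ would be deferred to the later examples section, where a specific sequence of metric triangles realizing distortion arbitrarily close to $D$ is exhibited.
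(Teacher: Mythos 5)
Your proposal rests on a misunderstanding of what the tripodal embedding actually is, and that gap invalidates both halves of the argument.

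You describe the map as ``sending each of the resulting pieces isometrically onto a segment in the plane emanating from a common base point.'' That is not a bi-Lipschitz embedding: the canonical projection $P\colon \triangle \to T$ of \Cref{lemm:tripod} is generically two-to-one (it folds the two sub-arcs $I_1^p$ and $I_3^p$, say, onto the same tripod leg), so composing with an isometry of the tripod $T$ onto a planar tripod $Y$ is not even injective. The point of the tripodal embedding is precisely the correction term: with $\bar x = (\varphi\circ P)(x)$ the tripod image and $v_j$ a fixed unit vector transverse to leg $j$, one sets
\[
F(x) \;=\; \bar x + \dist(x,\widehat{I}_j)\,v_j \qquad (x\in I_j),
\]
pushing each point off the tripod skeleton by an amount equal to its distance in $\triangle$ to the union $\widehat{I}_j$ of the other two edges. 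This ``fattening'' displacement is what separates the two folded halves of each leg, and it is the whole source of the work in the proof. Your proposal never mentions it.

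As a consequence, your claim that the upper Lipschitz bound holds with constant $1$ --- so that ``the entire distortion bound $D$ will then have to be absorbed into the lower Lipschitz constant'' --- is false for the actual map $F$. The perpendicular displacement means $F$ is genuinely expanding for some pairs; the paper shows $\|F(x)-F(y)\|^2 \le 7\, d(x,y)^2$ (\Cref{lemm:distortion_case4}, \Cref{lemm:distortion_case5}), while the reverse inequality gives $d(x,y)^2 \le \tfrac{16}{3}\|F(x)-F(y)\|^2$ (\Cref{lemm:distortion_case3}), and the constant $4\sqrt{7/3}=\sqrt{7}\cdot\frac{4}{\sqrt{3}}$ is exactly the product of these two contributions. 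Your lower-bound setup, with $\Phi(x)=o+su_i$, $\Phi(y)=o+tu_j$ lying on a planar tripod and an optimization over ``all angle triples $(\theta_{12},\theta_{23},\theta_{31})$,'' is therefore not the right picture either: the legs are at fixed $120^\circ$ angles, the image points do not lie on the tripod, and the case analysis in the paper is driven not by angles but by where in $\triangle$ the nearest points realizing $\dist(x,\widehat I_1)$ and $\dist(y,\widehat I_3)$ sit (on $I_2$ versus on the edge containing the other point). Without the displacement term and the resulting five-case decomposition, there is no route from your outline to the stated constant.

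Deferring the sharpness to the examples section is fine --- the paper does exactly that with the twisted heart in \Cref{sec:heart} --- but the body of the proof as you have sketched it does not compute the distortion of the map the theorem is about.
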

    In \Cref{sec:heart}, we use an example that we call the \textit{twisted heart} to show that the constant $4\sqrt{7/3}$ cannot be improved. To our knowledge, this example has not been considered in the literature previously. This leaves open the question of the best possible value for the distortion $D$ with an arbitrary embedding. Let $\lip(\triangle)$ denote the minimum value for which any metric triangle embeds in $\mathbb{R}^2$ with distortion at most $\lip(\triangle)$. A simple example, called the \textit{three-petal rose}, shows that $\lip(\triangle) \geq 2$; see \Cref{sec:rose}. The example of the twisted heart suggests an even larger value for $\lip(\triangle)$, perhaps roughly the value $4$.  However, we have not been able to find any general embedding method that improves upon the tripodal embedding.
    
    \begin{ques}
        What is the value of $\lip(\triangle)$?
    \end{ques} 

    In \Cref{sec:examples}, we study the previously mentioned examples, along with two other representative examples: tripods and the intrinsic circle. Specifically, we consider the problem of finding distortion-minimizing bi-Lipschitz embeddings for these triangles. We can think of such a distortion-minimizing embedding as an optimal ``Euclidean realization'' of the original triangle. Note, however, that such an embedding is typically not unique; since the distortion of an embedding depends only on the worst pair of points in the space, one can typically perturb a given embedding elsewhere without affecting the distortion.

    In the final main section of this paper, \Cref{sec:quadrilaterals}, we consider the situation for arbitrary metric polygons. Note that it follows immediately from general properties of bi-Lipschitz embeddings (see Theorem 3.2 in \cite{LP:01}) that every metric $n$-gon bi-Lipschitz embeds in $\mathbb{R}^{2n+1}$ with uniformly bounded distortion depending only on $n$. Thus our main question is, for a given $n$, to determine the smallest dimension for which one obtains an embedding of every $n$-gon with uniformly bounded distortion. We show that the analogue of \Cref{thm:embedding} is false for metric quadrilaterals, even with simple topology. 

    \begin{prop} \label{prop:bad_quadrilaterals}
        There is a sequence of metric quadrilaterals $X_n$, each homeomorphic to the circle, such that $\lip(X_n,\mathbb{R}^2)$ is unbounded as $n \to \infty$. 
    \end{prop}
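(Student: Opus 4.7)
My plan is to construct a sequence $X_n$ of metric quadrilaterals with the shape of a thin, twisted rectangle, where the cross-distances between the two long opposite edges encode $n$ near-identification constraints that cannot all be realized in $\mathbb{R}^2$ with bounded distortion. Concretely, let $X_n$ have four edges of lengths $n, \epsilon, n, \epsilon$ joined cyclically for some small $\epsilon > 0$. Parameterize the long edges as $p \colon [0,n] \to e_1$ and $q \colon [0,n] \to e_3$, and define cross-distances so that, in addition to the base ``thin rectangle'' pattern $d(p(t), q(n-t)) = \epsilon$, there are $n$ additional marked pairs $(\alpha_k, \beta_k)$ with $d(\alpha_k, \beta_k) = \epsilon$, chosen at parameter shifts that create a multi-layered coincidence structure between $e_1$ and $e_3$.

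The first main step is to verify that the resulting cross-distance function satisfies the triangle inequality jointly with the intrinsic arc-length metric on each edge. The critical condition is that no cross-edge path yields a shortcut between two points on the same edge: for every $t_1, t_2 \in [0,n]$ and every $s \in [0,n]$ one must have $|t_1 - t_2| \leq d(p(t_1), q(s)) + d(q(s), p(t_2))$. This translates into a $1$-Lipschitz constraint on the correspondence between the families of marked pairs, meaning the ``twisting'' can only be done within this rigidity.

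The second main step is the distortion lower bound. Given any bi-Lipschitz embedding $f \colon X_n \to \mathbb{R}^2$ with constant $L$, the images of $e_1$ and $e_3$ are bi-Lipschitz arcs of length at least $n/L$, and the cross-distance conditions force them to come within $L\epsilon$ of each other at many specified parameter values. A combined packing and topological argument then forces $L$ to grow with $n$. One concrete realization: the images of the two arcs together with the two short edges form a Jordan-like configuration in $\mathbb{R}^2$ whose geometry must accommodate all $n$ coincidence constraints; the isoperimetric inequality applied to sub-loops obtained between consecutive coincidences then forces the distortion to grow, plausibly at a rate like $L \gtrsim n^{1/3}$.

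The main obstacle is the construction itself: engineering cross-distances that simultaneously satisfy the triangle inequality (a strong restriction when edges are long and cross-distances are small) and encode the $n$ additional rigidity constraints. Because each edge is rigidly isometric to an interval, the permissible correspondences must be essentially monotone, so the growth of distortion must be obtained through finer mechanisms — for instance, by exploiting the interaction between the long edges and the short edges $e_2, e_4$, or by using multiple families of coincidence constraints at different length scales so that the obstruction accumulates as $n$ grows.
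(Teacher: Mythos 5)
Your high-level idea---build a quadrilateral whose cross-edge near-identifications create a topological obstruction to planar embedding---is the right kind of idea, but the concrete plan has a gap that you yourself half-notice and don't resolve, and a second gap in the lower-bound argument.

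The first and more serious gap is in the construction. You take the base pattern $d(p(t),q(n-t))=\epsilon$ and then propose to add ``$n$ additional marked pairs $(\alpha_k,\beta_k)$ with $d(\alpha_k,\beta_k)=\epsilon$.'' But the triangle inequality already forces $d(p(a),q(b))\geq |a+b-n|-\epsilon$ once the base pattern is in place: indeed, $d(p(a),q(b))\geq d(p(a),p(n-b))-d(p(n-b),q(b))=|a+b-n|-\epsilon$. So any additional pair at distance $\epsilon$ must satisfy $|a_k+b_k-n|\leq 2\epsilon$, i.e.\ it is forced to approximately coincide with the base correspondence and carries essentially no new information. This is precisely the monotone rigidity you flag as ``the main obstacle,'' and nothing in your sketch explains how to get around it. In fact, the quadrilateral with only the base twist embeds in the plane with distortion $O(1)$ independent of $n$ and $\epsilon$: map it to the boundary of the thin rectangle $[0,n]\times[0,\epsilon]$ via $p(t)\mapsto(t,0)$, $q(t)\mapsto(n-t,\epsilon)$, and the short edges to the vertical sides. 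So without the extra constraints being genuinely independent, there is no obstruction at all. The second gap is the lower bound itself: the packing/isoperimetric argument is not carried out and your own phrasing (``plausibly at a rate like $L\gtrsim n^{1/3}$'') signals that it is a guess.

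For comparison, the paper's proof does not need $n$ constraints at all. It fixes a circle of length $4$ with four unit edges, and adds exactly three length-$\epsilon$ shortcuts---between the two pairs of opposite vertices $p_1p_3$, $p_2p_4$, and between the midpoints $m_1,m_3$ of two opposite edges. These three shortcuts are not constrained to agree with any single monotone correspondence, so the triangle-inequality objection above does not bite, and the resulting graph is topologically non-planar. The lower bound then comes from the Jordan curve theorem rather than isoperimetry: the near-identifications $p_1'\approx p_3'$ and $p_2'\approx p_4'$ yield two short disjoint arcs $K_1,K_2$ crossing between the complementary components of the image curve, and $K_1\cup K_2\cup f(I_2)\cup f(I_4)$ separates $m_1'$ from $m_3'$; but any separating simple closed curve must pass near both $m_1'$ and $m_3'$ while staying far (order $1/L$) from them, a contradiction once $\epsilon<1/(6L^4)$. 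The sequence in the proposition is obtained by letting $\epsilon\to 0$, not by increasing a combinatorial complexity parameter. If you want to rescue your plan, you should replace ``many constraints of the same type'' with a small number of independent constraints that make the abstract graph non-planar, and replace the isoperimetric argument with a separation argument.
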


    This proposition is related to the fact that a simple closed curve separates the plane. On the other hand, we expect that there is no obstruction to embedding arbitrary metric $n$-gons in $\mathbb{R}^3$ with distortion bound depending only on $n$.

    \begin{ques}
        For each $n \geq 4$, is there a value $D_n$ such that every metric $n$-gon embeds in $\mathbb{R}^3$ with distortion $D_n$?
    \end{ques}

    We also ask whether every metric quadrilateral is individually bi-Lipschitz embeddable in $\mathbb{R}^2$, i.e., without a uniform bound on the bi-Lipschitz constant. 

    \begin{ques}
        Is every metric quadrilateral bi-Lipschitz embeddable in $\mathbb{R}^2$? 
    \end{ques}
    On the other hand, $K_5$, the complete graph on five vertices, can be realized as a metric pentagon. That is, a metric pentagon exists that is homeomorphic to $K_5$.  See \Cref{exm:pentagon} below. We recall Kuratowsk's Theorem in graph theory, which states that a graph can be topologically embedded in the plane if and only if it does not contain a subgraph that is a subdivision of $K_5$ or $K_{3,3}$ (the complete bipartite graph on two sets of three vertices). Thus a metric pentagon need not be even topologically embeddable in the plane.

    In addition to our work, there have been other investigations into the metric geometry of classes of one-dimensional metric spaces. For example, low-distortion embeddings of metric graphs in Euclidean space were studied by Linial--London--Rabinovich in \cite{LLR:95}. More recently, bi-Lipschitz embeddings of so-called \textit{quasiconformal trees} have also been studied by David--Eriksson-Bique--Vellis in  \cite{DV:22} and \cite{DEV:23}.

    \section{Background}

    \subsection{Metric spaces and bi-Lipschitz maps}

    We review some of the definitions and concepts related to metric spaces. For a detailed account of the subject, see the monographs of Bridson--Haefliger \cite{BH:99} and Burago--Burago--Ivanov \cite{BBI:01}.  

    \begin{defi}
        Given a set $X$, a \textit{metric} (or \textit{distance function}) on $X$ is a function $d \colon X \times X \to [0,\infty)$ satisfying the following properties for all $x,y,z \in X$:
        \begin{enumerate} \itemsep0em 
            \item $d(x,y)=0$ if and only if $x=y$ (positive definiteness)
            \item $d(x,y) = d(y,x)$ (symmetry)
            \item $d(x,y) \leq d(x,z) + d(z,y)$ (triangle inequality)
        \end{enumerate}
        A \textit{metric space} is a pair $(X,d)$, where $X$ is a set and $d$ is a metric on $X$. 
    \end{defi}
    Perhaps the most familiar example of a metric space is $n$-dimensional Euclidean space, which is the set $\mathbb{R}^n$ equipped with the Euclidean metric $d_{\text{Euc}}(x,y) = \|x - y\| = \sqrt{(x_1-y_1)^2 + \cdots + (x_n - y_n)^2}$. For an arbitrary metric space $(X,d)$, a natural question is how different the metric $d$ is from the Euclidean metric, quantitatively. To make this precise, we use the following definition.
    \begin{defi}
        For each $L>0$, a map $f \colon X \to Y$ between metric spaces $(X,d_X)$ and $(Y,d_Y)$ is \textit{$L$-Lipschitz} if $d_Y(f(x),f(y)) \leq Ld_X(x,y)$ for all $x,y \in X$. The map $f$ is \textit{$L$-bi-Lipschitz} if 
        \[L^{-1}d_X(x,y) \leq d_Y(f(x),f(y)) \leq Ld_X(x,y)\]
        for all $x,y \in X$. The map $f$ is \textit{Lipschitz} (resp. \textit{bi-Lipschitz}) if it is $L$-Lipschitz (resp. $L$-bi-Lipschitz) for some $L>0$. 
    \end{defi}

        The \textit{bi-Lipschitz embedding problem} then asks one to characterize when a given metric space can be mapped onto a subset of Euclidean space of some dimension under a bi-Lipschitz map. As discussed in the introduction, this is a difficult open problem in full generality. To refine this problem, we give the following definition. %Here, we note simply that not every metric space is

        \begin{defi}
            Let $f\colon X \to Y$ be a bi-Lipschitz map between metric spaces $(X,d_X)$ and $(Y,d_Y)$. The \textit{distortion} of $f$ is $\lip(f) = L_0\cdot L_1$, where $L_0, L_1$ are the minimum values such that $L_0^{-1}d_X(x,y) \leq d_Y(f(x),f(y)) \leq L_1d_X(x,y)$.

            For metric spaces $(X,d_X)$ and $(Y,d_Y)$, we define 
            \[\lip(X,Y) = \inf \lip(f),\]
            where the infimum is taken over all bi-Lipschitz embeddings $f\colon X \to Y$. Note that $\lip(X,Y) = \infty$ if no such embedding exists. 
        \end{defi}

        A \textit{curve} is a continuous function from an interval into a metric space. Let $(X,d)$ denote this target metric space. A curve $\Gamma \colon [a,b] \to X$ is  \textit{closed} if $\Gamma(a) = \Gamma(b)$ and \textit{simple} if it is injective (that is, it has no self-intersections) except that possibly $\Gamma(a) = \Gamma(b)$. 
        
        Given a curve $\Gamma \colon [a,b] \to X$, the \textit{length} of $\Gamma$ is 
        \[\ell(\Gamma) = \sup \sum_{j=1}^n d(\Gamma(t_{j-1}), \Gamma(t_j)),\] 
        where the supremum is taken over all finite partitions $a = t_0 < t_1 < \cdots < t_n = b$.
        A curve $\Gamma$ between two points $x$ and $y$ is a \textit{geodesic} if $\ell(\Gamma) = d(x,y)$. A metric space $(X,d)$ is a \textit{length space} if 
        \[d(x,y)  = \inf( \Gamma)\]
        for all $x,y \in X$, where the infimum is taken over all curves $\Gamma$ whose image contains $x$ and $y$. If $X$ is a compact length space, then by the Hopf--Rinow Theorem (see \cite[Proposition I.3.7]{BH:99}) every two points are connected by at least one geodesic. Given two points $x,y \in X$, we write $[xy]$ to denote some choice of geodesic between the two points. 

        The examples of metric polygons we consider in this paper are all either length spaces or subsets of length spaces. Topologically, they are graphs, so a length metric can be defined simply by specifying the length of each edge. See \cite[Chapter 2]{BBI:01} for a more detailed overview of length spaces. 

        Finally, the \textit{concatenation} of two curves $\Gamma_1 \colon [a,b] \to X$, $\Gamma_2 \colon [c,d] \to X$ satisfying $\Gamma_1(b) = \Gamma_2(c)$ is denoted by $\Gamma_1 * \Gamma_2$. This is the curve from $[a, b + d-c]$ to $X$ defined by the formula $\Gamma_1(t)$ if $t \leq b$ and $\Gamma_2(t - b + c)$ if $t \geq b$.

        \subsection{Metric polygons}

        Our interest in this paper is in a specific class of metric spaces called metric polygons.

    \begin{defi} \label{defi:polygon}
        Let $n \geq 2$. A \textit{metric $n$-gon} is a metric space that is the union of $n$ subspaces $I_1, \ldots, I_n$ that are each isometric to a closed interval $[0,\ell_n]$ and connected at the endpoints cyclically. More precisely, if we let $\varphi_j \colon [0,\ell_j] \to I_j$ denote the required isometry onto the interval $I_j$, then it holds that $\varphi_j(\ell_j) = \varphi_{j+1}(0)$ for all $j \in \{1, \ldots, n\}$, with the identification $n+1=1$. A metric $3$-gon is called a \textit{metric triangle}, and a metric $4$-gon is called a \textit{metric quadrilateral}, and so forth. More generally, a \textit{metric polygon} is a metric $n$-gon for some $n$.

        Each subspace $I_j$ is called an \textit{edge} of the metric polygon. Each endpoint of one of the edges $I_j$ is called a \textit{vertex} of the metric polygon.
    \end{defi}
    
    Note that this definition allows the sets $I_j$ to intersect at points other than the endpoints, although the nature of such additional intersection points is constrained by the triangle inequality. Also, note that the terms ``edge'' and ``vertex'' here are distinct from the topological meaning of these terms for a graph. For example, graphs (such as $K_5$ or $K_{3,3}$, as discussed in \Cref{exm:pentagon}) can be represented as metric polygons. In these cases, the topological edges and vertices of the graph need not coincide with its edges and vertices as a metric polygon.
    
    Next, we introduce some main examples of metric triangles that will be studied further in \Cref{sec:examples}. These are illustrated in \Cref{fig:examples}. In each of these examples, the space is a length space or subset of a length space. 

\begin{exm} \label{exm:examples}
    \begin{itemize}
        \item[(a)] \textbf{The intrinsic circle.} Let $S$ be the unit circle in $\mathbb{R}^2$, which we give the arc length or angular distance. More precisely, each point in $S$ can be represented by some $\theta \in [0,2\pi)$, with the correspondence $\theta \mapsto (\cos(\theta),\sin(\theta))$. Then $d(\theta,\tau) = \min\{|\theta - \tau|,|\theta - \tau - 2\pi|,|\theta - \tau + 2\pi|\}$. 

        To view this space as a metric triangle, we pick any three points that do not all belong to the interior of the same semicircle to be vertices. For example, we can take $\theta_1 = 0$, $\theta_2 = 2\pi/3$ and $\theta_3 = 4\pi/3$.
        \item[(b)] \textbf{The three-petal rose.} Let $S_1, S_2, S_3$ be three copies of the intrinsic circle as just defined. Let $R$ be the wedge sum of $S_1, S_2, S_3$ with basepoint $0$ in each $S_i$ (that is, the quotient space of the disjoint union of $S_1, S_2, S_3$ given by identifying the point $0$ in each $S_i$). We define $d$ to be the length metric on $R$. More precisely, $d$ is defined as follows. If $\theta, \tau \in S_i$ for the same $i$, then $d(\theta, \tau)$ agrees with the metric in (a). If $\theta \in S_i$ and $\tau \in S_j$ for $i \neq j$, then $d(\theta,\tau) = d(\theta,0) + d(0,\tau)$. 

        To view this space as a metric triangle, we take the antipodal point on each circle as vertices. That is, let $p = \pi \in S_1$, $q = \pi \in S_2$ and $r = \pi \in S_3$. The points $0$ and $p$ divide $S_1$ into two semicircles $S_1^1$, $S_1^2$. Likewise, $S_2$ and $S_3$ are divided into semicircles $S_2^1, S_2^2$ and $S_3^1, S_3^2$, respectively. Then the edges of $R$ are $[pq] = S_1^2 \cup S_2^1$, $[qr] = S_2^2 \cup S_3^1$ and $[rp] = S_3^2 \cup S_1^1$. 
        \item[(c)]  \textbf{Tripods.} Choose values $l_i \geq 0$, $i = 1,2,3$,  and consider the three intervals $T_i = [0, l_i]$, $i = 1,2,3$. We define $T$ to be the wedge sum of $T_1, T_2, T_3$ with basepoint $0$ in each $T_i$. We define $d$ to be the length metric on $T$. That is, if $x,y \in T_i$ for the same $i$, then $d(x,y) = |x-y|$. If $x \in T_i$ and $y \in T_j$ for $i\neq j$, then $d(x,y) = d(x,0) + d(0,y) = x+y$. Such a metric space is called a \textit{tripod}, or a \textit{metric tripod} for emphasis. We call the point $0$ the \textit{center} of the tripod. 
        
        Let $p = l_1 \in T_1$, $q = l_2 \in T_2$, $r = l_3 \in T_3$. We can view $T$ as a metric triangle by taking the points $p,q,r$ as vertices and $[pq] = T_1 \cup T_2$, $[qr] = T_2 \cup T_3$, $[rp] = T_3 \cup T_1$ as edges.
        \item[(d)] \textbf{The twisted heart.} The example is less obvious but turns out to be the extremal example for the tripodal embedding. We start with the graph consisting of fourteen edges of unit length as shown in \Cref{fig:heart}. The distance between two points is the length of the shortest curve between them. We obtain a triangle by taking $p,q,r$ as vertices and removing the edges from $a$ to $c$ and from $b$ to $e$ (the red edges in \Cref{fig:heart}) without changing the metric on the remaining points. Observe that each pair of vertices is at distance $4$ apart. Moreover, $d(a,b) = 4$ while $d(a,c) = d(b,e) = 1$. We denote this example by $H$. 
    \end{itemize}
\end{exm}

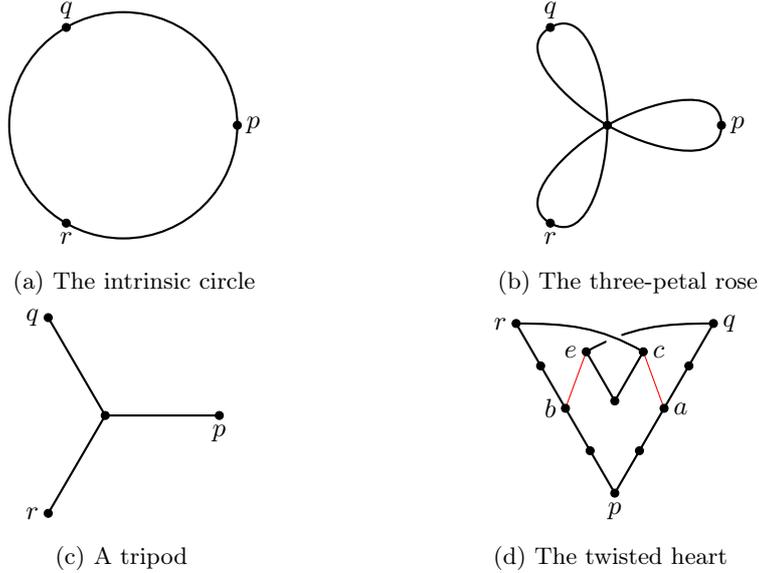
\begin{figure} 
    \centering
    \hfill
        \subfloat[The intrinsic circle]{
             \begin{tikzpicture}[scale=.75]
        \draw[black, thick] (0,0) circle (2);
        \filldraw[black] (2,0) circle (2pt) node[anchor=west]{$p$};
        \filldraw[black] (-1,1.73) circle (2pt) node[anchor=south]{$q$};
        \filldraw[black] (-1,-1.73) circle (2pt) node[anchor=north]{$r$};
        \end{tikzpicture}
    \label{fig:circle}}
    \hfill
        \subfloat[The three-petal rose]{
        \begin{tikzpicture}[scale=.75]
        \draw[black, thick] (0,0) to[out=-30,in=270] (2,0) to[out=90,in=30] (0,0) to[out=90,in=30] (-1,1.73) to[out=210,in=150] (0,0) to[out=210,in=150] (-1,-1.73) to[out=-30,in=270] (0,0);
        \filldraw[black] (2,0) circle (2pt) node[anchor=west]{$p$};
        \filldraw[black] (-1,1.73) circle (2pt) node[anchor=south]{$q$};
        \filldraw[black] (-1,-1.73) circle (2pt) node[anchor=north]{$r$};
        \filldraw[black] (0,0) circle (2pt);
        \end{tikzpicture}
    \label{fig:rose}}
    \hfill \hfill

    \hfill
    \subfloat[A tripod]{
        \begin{tikzpicture}[scale=.75]
        \draw[black, thick] (0,0)--(2,0);
        \draw[black, thick] (-1,1.73)--(0,0) -- (-1,-1.73);
        \filldraw[black] (0,0) circle (2pt);        
        \filldraw[black] (2,0) circle (2pt) node[anchor=north]{$p$};
        \filldraw[black] (-1,1.73) circle (2pt) node[anchor=east]{$q$};
        \filldraw[black] (-1,-1.73) circle (2pt) node[anchor=east]{$r$};
        %\filldraw[black] (5,0) circle (2pt) node[anchor=north]{$\bar{p}$};
        %\filldraw[black] (0,0) circle (2pt) node[anchor=east]{$o$};
        %\draw[gray, thick] (1,-1) -- node[below] {$\frac{1}{\sqrt{3}}d(x,y)$} (3,-1);
        \end{tikzpicture}
    \label{fig:tripod}}
    \hfill
    \subfloat[The twisted heart]{
             \begin{tikzpicture}[scale=.75]
        \draw[black, thick] (0,-2)--(1.73,1) to[out=180,in=30] (-.5,.5) -- (0,-.37);
        \filldraw[white] (0,.7) circle (4pt);
        \draw[black, thick]  (0,-.37) -- (.5,.5) to[out=150,in=0] (-1.73,1) -- (0,-2);
        \draw[red] (.865,-.5) -- (.5,.5);
        \draw[red] (-.865,-.5) -- (-.5,.5);
        \filldraw[black] (0,-2) circle (2pt) node[anchor=north]{$p$};
        \filldraw[black] (1.73,1) circle (2pt) node[anchor=west]{$q$};
        \filldraw[black] (-1.73,1) circle (2pt) node[anchor=east]{$r$};
        \filldraw[black] (.4325,-1.25) circle (2pt);
        \filldraw[black] (.865,-.5) circle (2pt) node[anchor=west]{$a$};
        \filldraw[black] (1.3,.25) circle (2pt);
        \filldraw[black] (-.4325,-1.25) circle (2pt);
        \filldraw[black] (-.865,-.5) circle (2pt) node[anchor=east]{$b$};
        \filldraw[black] (-1.3,.25) circle (2pt);
        \filldraw[black] (-.5,.5) circle (2pt) node[anchor=east]{$e$};
        \filldraw[black] (0,-.37) circle (2pt);
        \filldraw[black] (.5,.5) circle (2pt) node[anchor=west]{$c$};
        %\filldraw[black] (5,0) circle (2pt) node[anchor=north]{$\bar{p}$};
        \end{tikzpicture}
    \label{fig:heart}}
    \hfill \hfill
    \caption{Examples of metric polygons.}
    \label{fig:examples}
\end{figure}

Our final definition, used in the proof of \Cref{thm:embedding}, is the Gromov product, which measures the deficit in the triangle inequality for a triple of points.
\begin{defi}
    Let $X$ be a metric space and $p,q,r \in X$. The \textit{Gromov product} of $p$ and $q$ at $r$ is 
    \[(p \cdot q)_r = \frac{1}{2}(d(p,r)+d(q,r)-d(p,q)).\]
\end{defi}
The usefulness of the Gromov product for our purpose is that it allows us to associate to any metric triangle a tripod with matching edge lengths. This idea forms the basis of our proof that any metric triangle is bi-Lipschitz embeddable in $\mathbb{R}^2$. The following simple lemma was proved as Lemma 3.1 in \cite{NR:23}.
\begin{lemm} \label{lemm:tripod}
    Let $\triangle$ be a metric triangle with vertices $p,q,r$. Let $T$ be the tripod formed as in \Cref{exm:examples} by taking $l_1 = (q \cdot r)_p$, $l_2 = (p \cdot r)_q$ and $l_3 = (p \cdot q)_r$. Define a map $P\colon \triangle \to T$ by setting $P(p) = l_1 \in T_1$, $P(q) = l_2 \in T_2$, $P(r) = l_3 \in T_3$, and mapping each edge of $\triangle$ by arc length onto the corresponding edge of $T$. Then $P$ is $1$-Lipschitz. 
\end{lemm}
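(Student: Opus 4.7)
The plan is first to note that the choice of leg lengths forces $l_1 + l_2 = d(p,q)$, $l_2 + l_3 = d(q,r)$ and $l_1 + l_3 = d(r,p)$; these are immediate algebraic identities from the definition of the Gromov product. Hence the arc-length map $P$ on each edge is well-defined, sending the edge $[pq]$ (of length $d(p,q) = l_1 + l_2$) onto the concatenation $T_1 \cup T_2$ by folding it at the tripod center. In particular, $P$ is a local isometry along each individual edge, so $d_T(P(x), P(y)) = d_\triangle(x, y)$ whenever $x$ and $y$ lie on a common edge of $\triangle$.

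The substance of the proof is the case where $x$ and $y$ lie on different edges. Since every pair of edges in $\triangle$ shares a vertex, by symmetry one may assume $x \in [pq]$ and $y \in [qr]$; let $a = d_\triangle(x, q)$ and $b = d_\triangle(y, q)$. The images $P(x)$, $P(y)$ each lie on $T_2$ or on one of $T_1, T_3$ depending on whether $a \leq l_2$ and $b \leq l_2$. When both images lie on $T_2$, or when exactly one does, a direct computation gives $d_T(P(x), P(y)) = |a - b|$, which is at most $d_\triangle(x, y)$ by the triangle inequality in $\triangle$.

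The remaining configuration, where $a > l_2$ and $b > l_2$ so that $P(x) \in T_1$ and $P(y) \in T_3$, is the crux. Here $d_T(P(x), P(y)) = (a - l_2) + (b - l_2) = a + b - 2l_2$. Substituting $l_2 = \tfrac{1}{2}(d(p,q) + d(q,r) - d(p,r))$ together with $d(p,q) = d(p,x) + a$ and $d(q,r) = b + d(y,r)$ (valid since $x$ and $y$ lie on the respective edges), this expression telescopes to $d(p,r) - d(p,x) - d(y,r)$. Applying the triangle inequality in $\triangle$ along the chain $p \to x \to y \to r$ yields $d(p,r) \leq d(p,x) + d(x,y) + d(y,r)$, hence $d_T(P(x), P(y)) \leq d_\triangle(x, y)$, as required.

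The main obstacle, such as it is, amounts to bookkeeping in this last case: recognising that one must invoke the triangle inequality in $\triangle$ using the fourth vertex $p$ (or $r$) along with $x$ and $y$, rather than only $q$. Once this observation is made the calculation is a one-line reduction, and the lemma follows.
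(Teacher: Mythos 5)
Your proof is correct. The key identities $l_1+l_2 = d(p,q)$, $l_2+l_3 = d(q,r)$, $l_1+l_3 = d(r,p)$ follow directly from the Gromov product definitions, the same-edge case is an isometry since each edge of $\triangle$ is by definition isometric to an interval and $P$ maps it by arc length onto a geodesic of matching length in $T$, the cases where at least one image lands on $T_2$ reduce to $|a-b| \leq d(x,y)$ by the triangle inequality at $q$, and the crux case $P(x)\in T_1$, $P(y)\in T_3$ telescopes correctly to $d(p,r) - d(p,x) - d(y,r) \leq d(x,y)$ by the triangle inequality along $p\to x\to y\to r$. Note that the paper does not actually supply a proof here — it cites Lemma~3.1 of \cite{NR:23} — so there is no in-text argument to compare against; your proof is a reasonable self-contained reconstruction, and I would expect the cited proof to proceed along essentially the same case analysis, since that is the natural direct verification.
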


Each tripod $T$ can in turn be embedded into the plane in a natural way. For a given tripod $T$ with vertices $p,q,r$ and center $o$, let $r_1 = d(o,p)$, $r_2 = d(o,q)$, $r_3 = d(o,r)$. Let $u_1 = (r_1,0)$, $u_2 = r_2(-\frac{1}{2}, \frac{\sqrt{3}}{2})$, and $u_3 = r_3(-\frac{1}{2}, -\frac{\sqrt{3}}{2})$. For each $i = 1,2,3$, let $J_i$ be the union of the straight line segment from $(0,0)$ to $u_i$. Then let $Y = J_1 \cup J_2 \cup J_3$, equipped with the Euclidean metric from $\mathbb{R}^2$. We define a map $\varphi \colon T \to Y$ by mapping the segment $[op]$ onto $J_1$ by arc length, $[oq]$ onto $J_2$ by arc length and $[or]$ onto $J_3$ by arc length. 

    \section{Proof of optimal distortion bound} \label{sec:embedding} 

        In this section, we give the proof of our main result, \Cref{thm:embedding}, giving the optimal distortion bound for the tripodal embedding of an arbitrary metric triangle. 

        We start by reviewing the definition of the tripodal embedding as given in \cite{NR:23}. Let $\triangle$ be an arbitrary metric triangle with edges $I_1, I_2, I_3$. Denote the metric on $\triangle$ by $d$. Let $p$ be the common vertex of $I_1$ and $I_3$, $q$ be the common vertex of $I_1$ and $I_2$,  and $r$ be the common vertex of $I_2$ and $I_3$. For each edge $I_j$, let $\widehat{I}_j$ denote the union of the other two edges of $\triangle$. We can associate to the triangle $\triangle$ a metric tripod $T$ with matching edge lengths and corresponding map $P \colon \triangle \to T$ as in \Cref{lemm:tripod}. To this $T$ we associate an embedded tripod $Y$ and map $\varphi \colon T \to Y$ following the notation of the previous section. For conciseness, we write $\bar{x}$ in place of $(\varphi \circ P)(x)$.
      Let $v_1 = (\frac{1}{2}, \frac{\sqrt{3}}{2})$, $v_2 = (-1,0)$ and $v_3 = (\frac{1}{2}, -\frac{\sqrt{3}}{2})$. Then $F \colon \triangle \to \mathbb{R}^2$ is defined by the formula
        \[ F(x) = \bar{x} + \dist(x, \widehat{I}_j)v_j \text{ if } x \in I_j.\]

        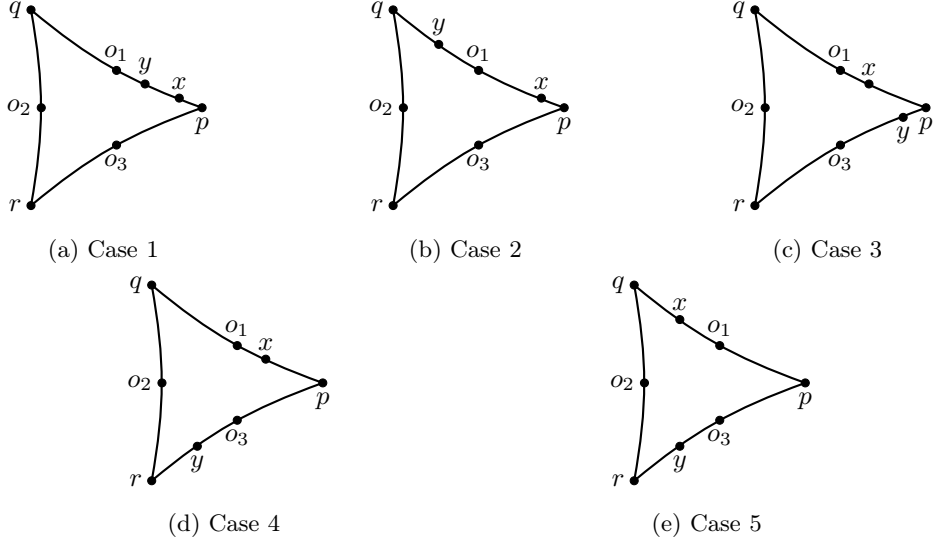
\begin{figure} 
    \centering
    \hfill
    \subfloat[Case 1]{
        \begin{tikzpicture}[scale=.75]
        \draw[black, thick] (2,0) to[out=160,in=320]  (-1,1.73) to[out=280,in=80] (-1,-1.73) to[out=40,in=200] (2,0);        
        \filldraw[black] (2,0) circle (2pt) node[anchor=north]{$p$};
        \filldraw[black] (-1,1.73) circle (2pt) node[anchor=east]{$q$};
        \filldraw[black] (-1,-1.73) circle (2pt) node[anchor=east]{$r$};
        \filldraw[black] (.5,.66) circle (2pt) node[anchor=south]{$o_1$};
        \filldraw[black] (-.82,0) circle (2pt) node[anchor=east]{$o_2$};
        \filldraw[black] (.5,-.66) circle (2pt) node[anchor=north]{$o_3$};
        \filldraw[black] (1,.42) circle (2pt) node[anchor=south]{$y$};
        \filldraw[black] (1.6,.17) circle (2pt) node[anchor=south]{$x$};
        \end{tikzpicture}
    \label{fig:five_cases_a}}
    \hfill
    \subfloat[Case 2]{
             \begin{tikzpicture}[scale=.75]
        \draw[black, thick] (2,0) to[out=160,in=320]  (-1,1.73) to[out=280,in=80] (-1,-1.73) to[out=40,in=200] (2,0);        
        \filldraw[black] (2,0) circle (2pt) node[anchor=north]{$p$};
        \filldraw[black] (-1,1.73) circle (2pt) node[anchor=east]{$q$};
        \filldraw[black] (-1,-1.73) circle (2pt) node[anchor=east]{$r$};
        \filldraw[black] (.5,.66) circle (2pt) node[anchor=south]{$o_1$};
        \filldraw[black] (-.82,0) circle (2pt) node[anchor=east]{$o_2$};
        \filldraw[black] (.5,-.66) circle (2pt) node[anchor=north]{$o_3$};
        \filldraw[black] (1.6,.17) circle (2pt) node[anchor=south]{$x$};
        \filldraw[black] (-.2,1.12) circle (2pt) node[anchor=south]{$y$};
        \end{tikzpicture}
    \label{fig:five_cases_b}}
    \hfill\subfloat[Case 3]{
             \begin{tikzpicture}[scale=.75]
        \draw[black, thick] (2,0) to[out=160,in=320]  (-1,1.73) to[out=280,in=80] (-1,-1.73) to[out=40,in=200] (2,0);        
        \filldraw[black] (2,0) circle (2pt) node[anchor=north]{$p$};
        \filldraw[black] (-1,1.73) circle (2pt) node[anchor=east]{$q$};
        \filldraw[black] (-1,-1.73) circle (2pt) node[anchor=east]{$r$};
        \filldraw[black] (.5,.66) circle (2pt) node[anchor=south]{$o_1$};
        \filldraw[black] (-.82,0) circle (2pt) node[anchor=east]{$o_2$};
        \filldraw[black] (.5,-.66) circle (2pt) node[anchor=north]{$o_3$};
        \filldraw[black] (1,.42) circle (2pt) node[anchor=south]{$x$};
        \filldraw[black] (1.6,-.17) circle (2pt) node[anchor=north]{$y$};
        \end{tikzpicture}
    \label{fig:five_cases_c}}
    \hfill \hfill

    \hfill
    \subfloat[Case 4]{
        \begin{tikzpicture}[scale=.75]
        \draw[black, thick] (2,0) to[out=160,in=320]  (-1,1.73) to[out=280,in=80] (-1,-1.73) to[out=40,in=200] (2,0);        
        \filldraw[black] (2,0) circle (2pt) node[anchor=north]{$p$};
        \filldraw[black] (-1,1.73) circle (2pt) node[anchor=east]{$q$};
        \filldraw[black] (-1,-1.73) circle (2pt) node[anchor=east]{$r$};
        \filldraw[black] (.5,.66) circle (2pt) node[anchor=south]{$o_1$};
        \filldraw[black] (-.82,0) circle (2pt) node[anchor=east]{$o_2$};
        \filldraw[black] (.5,-.66) circle (2pt) node[anchor=north]{$o_3$};
        \filldraw[black] (1,.42) circle (2pt) node[anchor=south]{$x$};
        \filldraw[black] (-.2,-1.12) circle (2pt) node[anchor=north]{$y$};
        \end{tikzpicture}
    \label{fig:five_cases_d}}
    \hfill
    \subfloat[Case 5]{
             \begin{tikzpicture}[scale=.75]
        \draw[black, thick] (2,0) to[out=160,in=320]  (-1,1.73) to[out=280,in=80] (-1,-1.73) to[out=40,in=200] (2,0);        
        \filldraw[black] (2,0) circle (2pt) node[anchor=north]{$p$};
        \filldraw[black] (-1,1.73) circle (2pt) node[anchor=east]{$q$};
        \filldraw[black] (-1,-1.73) circle (2pt) node[anchor=east]{$r$};
        \filldraw[black] (.5,.66) circle (2pt) node[anchor=south]{$o_1$};
        \filldraw[black] (-.82,0) circle (2pt) node[anchor=east]{$o_2$};
        \filldraw[black] (.5,-.66) circle (2pt) node[anchor=north]{$o_3$};
        \filldraw[black] (-.2,1.12) circle (2pt) node[anchor=south]{$x$};
        \filldraw[black] (-.2,-1.12) circle (2pt) node[anchor=north]{$y$};
        \end{tikzpicture}
    \label{fig:five_cases_e}}
    \hfill \hfill
    \caption{The five cases in the proof of \Cref{thm:embedding}.}
    \label{fig:five_cases}
\end{figure}

        Here, given a point $x \in X$ and a set $A \subset X$, we write $\dist(x,A) = \inf_{a \in A} d(x,y)$. 

        We fix some additional notation. Let $I_1^p$ denote the subarc of $I_1$ consisting of those points $x \in I_1$ for which $d(x,p) \leq (q \cdot r)_p$. Define $I_1^q$, $I_2^q$, $I_2^r$, $I_3^r$ and $I_3^p$ similarly. For each edge $I_j$, let $o_j$ denote the unique point such that $\bar{o}_j$ is the origin. Observe that $I_1 = I_1^p \cup I_1^q$, with $I_1^p$ and $I_1^q$ intersecting at the point $o_1$, and likewise for $I_2$ and $I_3$.  

        In this proof, we consider two arbitrary distinct points $x,y \in \triangle$ and show that $\|F(x) - F(y)\|$ satisfies appropriate bounds. By symmetry, we may consider five individual cases in our proof of \Cref{thm:embedding}: (1) $x,y \in I_1^p$; (2) $x \in I_1^p, y \in I_1^q$; (3) $x \in I_1^p, y \in I_3^p$; (4) $x \in I_1^p, y \in I_3^r$; (5) $x \in I_1^q, y \in I_3^r$. See \Cref{fig:five_cases}. Each of these cases is represented by one of the lemmas in this section. Next, given this choice of $x$ and $y$, we let $z \in \triangle$ denote a point satisfying $d(x,z) = \dist(x,\widehat{I}_1)$. This point may fail to be uniquely determined, so we fix a choice of $z$. Similarly, we let $w \in \triangle$ denote a point satisfying $d(y,w) = \dist(y,\widehat{I}_j)$, where $j$ is such that $y \in I_j$.

        \begin{lemm} \label{lemm:distortion_case1}
            Suppose that $x,y \in I_1^p$. Then
            \[  \frac{\sqrt{3}}{2}d(x,y) \leq \|F(x) - F(y)\| \leq \sqrt{3}d(x,y).\]
        \end{lemm}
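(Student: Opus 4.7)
The plan is to reduce the inequality to an elementary one-variable quadratic estimate by writing $F(x)-F(y)$ in coordinates. The first step is to identify $\bar x = \varphi(P(x))$ explicitly. Since $x \in I_1^p$ satisfies $d(x,p) \leq (q\cdot r)_p = l_1$, the map $P$ sends $x$ to the point of $T_1$ at distance $d(x,p)$ from $p$, and then $\varphi$ places it on $J_1 \subset \mathbb{R}^2$. Thus $\bar x = ((q\cdot r)_p - d(x,p),\,0)$, and similarly for $\bar y$. Because $x,y$ lie on the same arc $I_1$ we have $d(x,y) = |d(x,p)-d(y,p)|$, so after relabeling so that $d(x,p)\leq d(y,p)$,
\[
\bar x - \bar y = \bigl(d(y,p)-d(x,p),\,0\bigr) = \bigl(d(x,y),\,0\bigr).
\]

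Next, let $\Delta = d(x,y)$ and $\gamma = \dist(x,\widehat I_1) - \dist(y,\widehat I_1)$. From the definition of $F$,
\[
F(x)-F(y) = \Delta(1,0) + \gamma\, v_1 = \Bigl(\Delta + \tfrac{\gamma}{2},\;\tfrac{\sqrt 3}{2}\gamma\Bigr),
\]
so, using that $(1,0)$ and $v_1$ are unit vectors at $60^\circ$,
\[
\|F(x)-F(y)\|^2 = \bigl(\Delta + \tfrac{\gamma}{2}\bigr)^2 + \tfrac{3}{4}\gamma^2 = \Delta^2 + \Delta\gamma + \gamma^2.
\]

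The lower bound is immediate by completing the square: $\Delta^2+\Delta\gamma+\gamma^2 = (\gamma+\Delta/2)^2 + \tfrac{3}{4}\Delta^2 \geq \tfrac{3}{4}\Delta^2$, giving $\|F(x)-F(y)\|\geq \tfrac{\sqrt 3}{2}\,d(x,y)$. For the upper bound I would invoke the fact that the distance-to-a-set function $\dist(\cdot,\widehat I_1)$ is $1$-Lipschitz, hence $|\gamma|\leq \Delta$. On the interval $[-\Delta,\Delta]$ the quadratic $\Delta^2+\Delta\gamma+\gamma^2$ is maximized at $\gamma = \Delta$, where it equals $3\Delta^2$. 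Therefore $\|F(x)-F(y)\|\leq \sqrt 3\, d(x,y)$.

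There is no real obstacle in this case: since both points end up on the same leg of the embedded tripod, $F(x)-F(y)$ lies in the two-dimensional span of $(1,0)$ and $v_1$ and the estimate reduces to the $60^\circ$ law of cosines. The only point that requires some care is correctly reading off $\bar x$ from the definitions of $P$ and $\varphi$ so as to conclude $\|\bar x - \bar y\| = d(x,y)$; this identification depends on $x$ and $y$ lying in the same sub-arc $I_1^p$, which will fail in the subsequent four cases and force more substantial arguments there.
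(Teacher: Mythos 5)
Your proof is correct and takes essentially the same approach as the paper: you identify $\bar x - \bar y = (d(x,y),0)$, bound $|\dist(x,\widehat I_1)-\dist(y,\widehat I_1)|\le d(x,y)$ by $1$-Lipschitzness of the distance-to-a-set function, and then use the $60^\circ$ angle between the horizontal and $v_1$. The paper phrases the lower bound as the distance between two parallel lines and the upper bound as the furthest point of a segment, but these are the geometric counterparts of your completed-square and endpoint-maximum computations; the content is identical.
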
 
        \begin{proof}
            Assume that $d(x,p) < d(y,p)$. We first verify the lower bound. Note that both $\Bar{x}$ and $\Bar{y}$ lie on the positive real axis, with $\|\Bar{x}-\Bar{y}\| = d(x,y)$. Moreover, $F(x)$ and $F(y)$ lie on the lines $\{\bar{x} + t v_1: t \in \mathbb{R}\}$ and $\{\bar{y} + t v_1: t \in \mathbb{R}\}$, respectively. The distance between these lines is $\frac{\sqrt{3}}{2}d(x,y)$. See \Cref{fig:case1_lower}. This verifies the lower bound.

            Next, we show the upper bound. Observe that $|d(x,\widehat{I}_1) - d(y,\widehat{I}_2)| \leq d(x,y)$. Write $F(y)$ as $F(y) = \bar{y} + t(y)v_1$ for some $t(y) \geq 0$. Then $F(x)$ belongs to the set $\bar{x} + t v_j$, where $t \in [t(y) - d(x,y), t(y) + d(x,y)]$. The point in this set furthest from $F(y)$ is $\bar{x} + (t(y) + d(x,y))v_j$, which is at distance $\sqrt{3}d(x,y)$ from $F(y)$. See \Cref{fig:case1_upper}. This verifies the upper bound.
        \end{proof}

                \begin{figure} 
    \centering
    \hfill
    \subfloat[Lower bound in \Cref{lemm:distortion_case1}]{
        \begin{tikzpicture}[scale=.75]
        \draw[gray, thick] (0,0)--(-2,3.46);
        \draw[gray, thick] (-2.3/2,-3.46/2)--(0,0) -- (5,0);
        \draw[gray, dotted] (1,0) -- (3, 3.46);
        \draw[gray, dotted] (3,0) -- (5,3.46);
        \draw[gray, thick] (3,0) -- (1.5,.87);
        \filldraw[black] (3,0) circle (2pt) node[anchor=north]{$F(x)$};
        \filldraw[black] (1,0) circle (2pt) node[anchor=north]{$\Bar{y}$};
        \filldraw[black] (1.5,.87) circle (2pt) node[anchor=east]{$F(y)$};
        \filldraw[black] (0,0) circle (2pt) node[anchor=east]{$o$};
        \end{tikzpicture}
    \label{fig:case1_lower}}
    \hfill
    \subfloat[Upper bound in \Cref{lemm:distortion_case1}]{
             \begin{tikzpicture}[scale=.75]
        \draw[gray, thick] (0,0)--(-2,3.46);
        \draw[gray, thick] (-2.3/2,-3.46/2)--(0,0) -- (5,0);
        \draw[gray, dotted] (1,0) -- (3, 3.46);
        \draw[gray, dotted] (3,0) -- (5,3.46);
        \draw[gray, thick] (1.5,.87) -- (3.5, .87) -- (4.5,2.6) -- (1.5, .87);
        \filldraw[black] (4.5,2.6) circle (2pt) node[anchor=west]{$F(x)$};
        \filldraw[black] (1,0) circle (2pt) node[anchor=north]{$\Bar{y}$};
        \filldraw[black] (1.5,.87) circle (2pt) node[anchor=east]{$F(y)$};
        \filldraw[black] (3,0) circle (2pt) node[anchor=north]{$\Bar{x}$};
        \filldraw[black] (0,0) circle (2pt) node[anchor=east]{$o$};
        \end{tikzpicture}
    \label{fig:case1_upper}}
    \hfill \hfill
    \caption{}
    \label{fig:case1}
\end{figure}

        \begin{lemm} \label{lemm:distortion_case2}
               Suppose that $x \in I_1^p$ and $y \in I_1^q$. Then
            \[  \frac{\sqrt{3}}{2}d(x,y) \leq \|F(x) - F(y)\| \leq \sqrt{3}d(x,y).\]
        \end{lemm}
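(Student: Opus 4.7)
The approach mirrors that of \Cref{lemm:distortion_case1}, but with the new feature that $\bar{x}$ and $\bar{y}$ land on two \emph{different} branches of the tripod image $Y$: $\bar{x}$ sits on $J_1$ (along the positive $x$-axis) and $\bar{y}$ sits on $J_2$ (along the ray at angle $2\pi/3$). Setting $\sigma = \|\bar{x}\| = (q\cdot r)_p - d(x,p)$ and $\tau = \|\bar{y}\| = (p\cdot r)_q - d(y,q)$, we have $\bar{x} = \sigma(1,0)$ and $\bar{y} = \tau(-1/2,\sqrt{3}/2)$. The identity $d(x,y) = \sigma + \tau$ comes from pairing the arc-length upper bound on $I_1$ with the reverse inequality $d(x,y) \geq d_T(P(x),P(y)) = \sigma + \tau$ supplied by \Cref{lemm:tripod} (applied to points on different branches of $T$).

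The core idea is to decompose $F(x) - F(y)$ in the orthonormal frame $\{v_1, w\}$, where $w = (-\sqrt{3}/2,1/2)$ is the image of $v_1$ under rotation by $\pi/2$. A short calculation gives
\[
\langle \bar{x} - \bar{y},\, v_1 \rangle = \frac{\sigma - \tau}{2}, \qquad \langle \bar{x} - \bar{y},\, w \rangle = -\frac{\sqrt{3}}{2}(\sigma + \tau).
\]
Writing $a = \dist(x, \widehat{I}_1)$ and $b = \dist(y, \widehat{I}_1)$, the passage from $\bar{\cdot}$ to $F$ only alters the $v_1$-component of $\bar{x}-\bar{y}$, and so
\[
\|F(x) - F(y)\|^2 = \left(\frac{\sigma-\tau}{2} + (a-b)\right)^{\!2} + \frac{3(\sigma+\tau)^2}{4}.
\]

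The lower bound $\|F(x) - F(y)\| \geq \tfrac{\sqrt{3}}{2}\,d(x,y)$ is immediate from the $w$-component alone. For the upper bound, the key input is that $\dist(\cdot,\widehat{I}_1)$ is $1$-Lipschitz, which yields $|a - b| \leq d(x,y) = \sigma + \tau$; combined with the trivial estimate $|\sigma - \tau| \leq \sigma + \tau$, this gives $\bigl|(\sigma-\tau)/2 + (a-b)\bigr| \leq \tfrac{3}{2}(\sigma+\tau)$, and hence $\|F(x) - F(y)\|^2 \leq \tfrac{9}{4}(\sigma+\tau)^2 + \tfrac{3}{4}(\sigma+\tau)^2 = 3\,d(x,y)^2$. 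I do not anticipate a genuine obstacle: the perpendicular-projection identity makes the lower bound automatic, and the $1$-Lipschitz control on distances to $\widehat{I}_1$ provides the upper bound with room to spare.
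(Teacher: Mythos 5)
Your proof is correct, and it is tidier than the paper's. For the \emph{lower} bound the two arguments are really the same: you decompose $F(x)-F(y)$ in the orthonormal frame $\{v_1,w\}$ and drop the $v_1$-component, while the paper phrases the identical fact as the distance between the parallel lines $\bar{x}+\mathbb{R}v_1$ and $\bar{y}+\mathbb{R}v_1$. For the \emph{upper} bound the routes diverge: the paper introduces auxiliary ``worst-case'' points $\widetilde{x}$ and $\widetilde{y}$ (pushing $F(x)$ to the far end of the allowed segment and reflecting $\bar{y}$ onto the negative horizontal axis) and then defers to the computation in \Cref{lemm:distortion_case1}; you instead record the closed-form identity
\[
\|F(x)-F(y)\|^2 = \left(\tfrac{\sigma-\tau}{2}+(a-b)\right)^{2} + \tfrac{3}{4}(\sigma+\tau)^2
\]
and bound the $v_1$-component directly by $|a-b|\le d(x,y)$ ($1$-Lipschitzness of $\dist(\cdot,\widehat{I}_1)$) and $|\sigma-\tau|\le\sigma+\tau$. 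Both proofs rest on the same essential estimate $|a-b|\le d(x,y)$, but your version is self-contained (no reduction to \Cref{lemm:distortion_case1}) and, as a bonus, your exact formula for $\|F(x)-F(y)\|^2$ is the same style of expression the paper already computes and relies on in Cases~3--5, so this would unify the exposition. One small remark: your justification of $d(x,y)=\sigma+\tau$ via \Cref{lemm:tripod} is more elaborate than necessary---since $I_1$ is isometric to an interval and $o_1$ separates $I_1^p$ from $I_1^q$, the identity $d(x,y)=d(x,o_1)+d(o_1,y)$ is immediate.
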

        \begin{proof}
            Assume without loss of generality that $d(o_1,y) \leq d(o_1,x)$. We first verify the lower bound. As in the previous case, observe that $F(x)$ and $F(y)$ lie on the lines $\{\bar{x} + tv_1: t \in \mathbb{R}\}$ and $\{\bar{y} + tv_1: t \in \mathbb{R}\}$, respectively. The distance between these lines is $\frac{\sqrt{3}}{2}d(o_1,x) + \frac{\sqrt{3}}{2}d(o_1,y)$. It follows that
            \[\frac{\sqrt{3}}{2}(d(o_1,x) + d(o_1,y)) \leq \|F(x) - F(y)\|.\] See \Cref{fig:case2_lower}. Since $d(x,y) = d(x,o_1) + d(o_1,y)$, we obtain the lower bound. %Note that $\bar{x}$ lies on the positive real axis, while $\bar{y}$ lies in the positive span of $u_2$. 

            We next verify the upper bound. Write $F(y)$ as $F(y) = \bar{y} + t(y)v_1$ for some $t(y) \geq 0$. As in the previous case, $F(x)$ belongs to the set $\bar{x} + tv_1$, where $t \in [t(y) - d(x,y), t(y) + d(x,y)]$. Since $d(o_1,y) \leq d(o_1,x)$, the point $\bar{x} + (t(y) + d(x,y))v_1$ has largest possible distance from $F(y)$ in this set. Denote the latter point by $\widetilde{x}$. Let $\widetilde{y} = (-d(o_1,y),0) + t(y)v_1$; that is, in the definition of $F(y)$, we replace $\bar{y}$ with the point along the negative horizontal axis at the same distance from the origin. The point $\widetilde{y}$ belongs to the same line $\{\bar{y} + tv_1: t \in \mathbb{R}\}$ but is further away from $\widetilde{x}$ than $F(y)$. Thus $\|F(x) - F(y)\| \leq \|\widetilde{x} - \widetilde{y}\|$. But $\|\widetilde{x} - \widetilde{y}\| \leq \sqrt{3}d(x,y)$ by the same analysis as in \Cref{lemm:distortion_case1}. This establishes the upper bound. 
        \end{proof}

                \begin{figure} 
    \centering
    \hfill
    \subfloat[The lower bound in \Cref{lemm:distortion_case2} is the length of the solid line shown]{
        \begin{tikzpicture}[scale=.75]
        \draw[gray, thick] (0,0)--(-2,3.46);
        \draw[gray, thick] (-2.3/2,-3.46/2)--(0,0) -- (5,0);
        \draw[gray, dotted] (3,0) -- (4.5,3.46);
        \draw[gray, dotted] (-1,1.73) -- (0,3.46);
        \draw[gray, thick] (3,0) -- (-.75,2.17);
        %\draw[gray, thick] (3,0) -- (-1,1.73);
        \filldraw[black] (-1,1.73) circle (2pt) node[anchor=east]{$F(y)$};
        \filldraw[black] (3,0) circle (2pt) node[anchor=north]{$F(x)$};
        \filldraw[black] (0,0) circle (2pt) node[anchor=east]{$o$};
        \filldraw[black] (-.75,2.17) circle (2pt);
        \end{tikzpicture}
    \label{fig:case2_lower}}
    \hfill
    \subfloat[The upper bound in \Cref{lemm:distortion_case2} is the length of the solid line shown]{
             \begin{tikzpicture}[scale=.75]
        \draw[gray, thick] (0,0)--(-2,3.46);
        \draw[gray, thick] (-2.3/2,-3.46/2)--(0,0) -- (5,0);
        \draw[gray, dotted] (4/3,0) -- (10/3,3.46);
        \draw[gray, dotted] (-4/3,0) -- (2/3,3.46);
        %\draw[gray, thick] (4/3,0) -- (-4/3,0) -- (2.67,2.3) -- (1.33,0);
        \draw[gray, thick] (-4/3,0) -- (2.67,2.3);
        \draw[gray, dotted] (0,0) -- (-2,0);
        \filldraw[black] (-.67,1.15) circle (2pt) node[anchor=east]{$F(y)$};
        %\filldraw[black] (-.47,1.5) circle (2pt) node[anchor=east]{};
        \filldraw[black] (4/3,0) circle (2pt) node[anchor=north]{$\bar{x}$};
        \filldraw[black] (-4/3,0) circle (2pt) node[anchor=north]{$\widetilde{y}$};
        \filldraw[black] (2.27,1.6) circle (2pt) node[anchor=west]{$F(x)$};
        \filldraw[black] (2.67,2.3) circle (2pt) node[anchor=west]{$\widetilde{x}$};
        \filldraw[black] (0,0) circle (2pt) node[anchor=east]{$o$};
        \end{tikzpicture}
    \label{fig:case2_upper}}
    \hfill \hfill
    \caption{}
    \label{fig:case2}
    \end{figure}

    In the remaining lemmas, it is more convenient to phrase the inequalities in terms of the square of the distortion rather than the distortion itself. We introduce some more notation. Note that $x \in I_1$ and $y \in I_3$ in all the remaining cases. We let $a = d(x,z)$, $b = d(y,w)$, $s = d(x,o_1)$ and $t = d(y,o_3)$. Recall that $z$ be a point in $\widehat{I}_1$ such that $d(x,z) = d(x,\widehat{I}_1)$, and $w$ a point in $\widehat{I}_3$ such that $d(y,w) = d(y,\widehat{I}_3)$. 

        \begin{lemm} \label{lemm:distortion_case3}
            Suppose that $x \in I_1^p$ and $y \in I_3^p$. Then 
        \[\frac{3}{16} \leq \frac{\|F(x) - F(y)\|^2}{d(x,y)^2} \leq 4.\]
        \end{lemm}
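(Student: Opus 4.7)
The plan is to compute $\|F(x) - F(y)\|^2$ in closed form and then bound it above and below in terms of $d(x,y)^2$. Since $x \in I_1^p$ and $y \in I_3^p$, both $\bar{x}$ and $\bar{y}$ lie on the segment $J_1$ of the positive $x$-axis, at distances $s$ and $t$ from the origin. Using $v_1 = (\tfrac{1}{2}, \tfrac{\sqrt{3}}{2})$ and $v_3 = (\tfrac{1}{2}, -\tfrac{\sqrt{3}}{2})$, a direct computation yields
\[
    \|F(x) - F(y)\|^2 = \left(s - t + \tfrac{a - b}{2}\right)^2 + \tfrac{3(a+b)^2}{4} = (s-t)^2 + (s-t)(a-b) + a^2 + ab + b^2.
\]

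For the upper bound, I will use the three constraints $a \leq d(x,y)$ (since $y \in \widehat{I}_1$), $b \leq d(x,y)$ (since $x \in \widehat{I}_3$), and $|s - t| \leq d(x,y)$ (since the tripod map $P$ of \Cref{lemm:tripod} is $1$-Lipschitz). Optimizing the above quadratic expression in $a$, $b$, and $s - t$ subject to these bounds shows that its maximum equals $4 d(x,y)^2$, attained at $a = b = |s - t| = d(x,y)$.

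For the lower bound, I will fix realizer points $z \in \widehat{I}_1$ of $a$ and $w \in \widehat{I}_3$ of $b$ and split into three cases based on which edges contain $z$ and $w$. If $z \in I_3$, then $z$ and $y$ lie on a common edge, so $d(z,y)$ is an arc length; the $1$-Lipschitz property of $P$ gives $|d(p,z) - \alpha| \leq a$, hence $d(z,y) \leq |\alpha - \beta| + a = |s-t| + a$, so $d(x,y) \leq d(x,z) + d(z,y) \leq 2a + |s-t|$. A symmetric argument when $w \in I_1$ gives $d(x,y) \leq 2b + |s-t|$. In the remaining case $z, w \in I_2$, the images $P(z)$ and $P(w)$ lie outside the arm $T_1$ containing $P(x)$ and $P(y)$, so the $1$-Lipschitz property of $P$ forces $s \leq a$ and $t \leq b$; it further restricts the arc-length positions of $z$ and $w$ on $I_2$ (measured from $q$) to the intervals $[l_2 - (a - s), l_2 + (a - s)]$ and $[l_2 - (b - t), l_2 + (b - t)]$ respectively, yielding $d(z,w) \leq (a - s) + (b - t)$ and hence $d(x,y) \leq 2(a+b) - (s+t)$.

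In each of the three cases, substituting the corresponding upper bound on $d(x,y)$ reduces the desired inequality $\|F(x)-F(y)\|^2 \geq \tfrac{3}{16}\, d(x,y)^2$ to a polynomial non-negativity statement in $s-t$, $a$, and $b$ (and in $s+t$ in the third case). I expect the main obstacle to be carrying out these algebraic verifications cleanly, especially in the third case, where the constraint $s + t \leq a + b$ is essential to close the gap. The lower bound is saturated in the limit of a ``thin'' triangle with $l_2, l_3 \to 0$ and $x = o_1$, $y = o_3$, which makes the ratio $\|F(x)-F(y)\|^2/d(x,y)^2$ tend to exactly $3/16$.
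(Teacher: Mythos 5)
Your upper-bound argument is correct, and normalizing so that $d(x,y)=1$ rather than $a=1$ (as the paper does) is a genuine simplification: it forces $a,b,|s-t|\in[0,1]$ simultaneously and avoids the paper's two-regime split ($|e|\le 1$ vs.\ $|e|\ge 1$) handled by Lemmas A.1 and A.2. The formula for $\|F(x)-F(y)\|^2$ and the deduction that the maximum is $4$ both check out. Your third case of the lower bound ($z,w\in I_2$) also matches the paper: the tripod constraints give $s\le a$, $t\le b$, and $d(z,w)\le (a-s)+(b-t)$, leading to $d(x,y)\le 2(a+b)-(s+t)$, which is exactly the paper's Case~2.

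The lower bound has a genuine gap, however, in the cases $z\in I_3$ and (symmetrically) $w\in I_1$. You derive $d(x,y)\le 2a+|s-t|$ and propose to finish by optimizing the resulting quadratic over $a,b,s-t\ge 0$. That optimization does not close. After scaling to $a=1$, take $b=0$ and $s-t=-\tfrac12$: then $\|F(x)-F(y)\|^2 = (s-t+\tfrac{a-b}{2})^2 + \tfrac34(a+b)^2 = \tfrac34$, while $(2a+|s-t|)^2 = \tfrac{25}{4}$, so the inequality $\|F(x)-F(y)\|^2 \ge \tfrac{3}{16}(2a+|s-t|)^2$ would require $\tfrac34 \ge \tfrac{75}{64}$, which is false. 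This configuration is excluded not by the constraints you list but by a hidden coupling: since $p\in\widehat{I}_1$ one always has $a\le d(x,p)=l_1-s$, and likewise $b\le l_1-t$, so $a$, $b$, $s$, $t$ cannot be varied independently once the unscaled parameter $l_1=(q\cdot r)_p$ is in play. The paper sidesteps this by splitting on the sign of $s-t$: when $s\ge t$ it uses the signed bound $d(x,y)\le 2a+(s-t)$ and $M_2\le 4$; when $s<t$ it proves the dichotomy ``$b\ge t-s$, or $w\in I_1$ with $d(p,w)\le d(p,x)$,'' the first alternative restricting $\widetilde M_2$ to the region $-b\le e\le 0$ where it stays $\le 13/3$, and the second reducing to the symmetric case. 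You anticipate that an extra constraint is ``essential'' only in the $z,w\in I_2$ case, but it is in the $z\in I_3$ and $w\in I_1$ cases that the extra structure is indispensable, and without it your reduction to a polynomial non-negativity statement breaks down.
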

        \begin{proof}
            Observe that $\bar{x}$ and $\bar{y}$ belong to the same leg of the embedded tripod $Y$. %Moreover, by rescaling and symmetry, we may assume that $a = d(x, \widehat{I}_1) = 1$ and $b = d(y,\widehat{I}_3) \leq a = 1$. Recall that $|s-t| = \|\bar{x} - \bar{y}\|$; see \Cref{fig:case3_upper}. 
            First we show the upper bound. Note that $d(x,y) \geq |t-s|$, since by \Cref{lemm:tripod} the map $\varphi \circ P$ is $1$-Lipschitz. It follows that $d(x,y) \geq \max\{a,b,|t-s|\}$. Let $e = |t-s|$. %Then $e = t-s$ if $s \leq t$ and let $e = s-t$ if $t \leq s$. 
            Observe that $F(x) = (s + \frac{1}{2}a, \frac{\sqrt{3}}{2}a)$ and $F(y) = (t + \frac{1}{2}b, - \frac{\sqrt{3}}{2}b)$, from which we calculate
\[\|F(x) - F(y)\|^2 = \frac{1}{4}\left(a - b + 2s - 2t \right)^2 + \frac{3}{4}\left(a + b\right)^2.\]
By rescaling and symmetry, we assume now that $a=1$ and $b \leq 1$. If $|e| \leq 1$, then $d(x,y) \geq \max\{a,b\} = 1$ and hence 
\[\frac{\|F(x) - F(y)\|^2}{d(x,y)^2} \leq M_1(b,e)\]
for
\[M_1(b,e) = \frac{1}{4}\left(1 - b + 2e \right)^2 + \frac{3}{4} \left(1 + b\right)^2.\]
The function $M_1(b,e)$ is maximized on the set $\{(b,e): 0 \leq b,e \leq 1\}$ by taking $b=e=1$, which gives a value of $4$. Since this statement is plausible enough (for example, by plotting the function), and checking it is unrelated to the rest of the proof, we defer the verification of this claim to the Appendix. See \Cref{lemm:M1} for the details. We do the same with similar inequalities arising in this section. 

If $|e| \geq 1$, then 
\[\frac{\|F(x) - F(y)\|^2}{d(x,y)^2} \leq \frac{1}{e^2}M_1(b,e).\]
%\[\frac{\|F(x) - F(y)\|^2}{d(x,y)^2} \leq \frac{1}{e^2}\left(\left(\frac{1}{2} - \frac{b}{2} + e \right)^2 + \frac{3}{4}\left(1 + b\right)^2\right).\]
On the set $\{(b,e): 0 \leq b \leq 1, 1 \leq e <\infty\}$, the function $e^{-2}M_1(b,e)$ is maximized by again taking $b= e= 1$, which gives a value of $4$. See \Cref{lemm:M1b}. This establishes the upper bound. %Similarly, on the set $\{(b,e): 0 \leq b \leq 1, -\infty < e  \leq -1\}$, the right-hand side is maximized by taking $b= 1$, $e=-1$, again giving a value of $4$.

Now we show the lower bound. At this point, we separate into two cases. \newline 

\noindent \textit{Case 1.} Assume first that $z \in I_3$ or $w \in I_1$. By symmetry, we may assume that $z \in I_3$. If $d(p,z) \leq d(p,y)$, the triangle inequality implies that
\begin{align*}
  d(p,o_1) & \leq  d(p,z) + d(z,x) + d(x,o_1)  \\
        & = d(p,z) + a + s  = d(p,o_3) - d(o_3,y) - d(y,z) + a + s\\
        & = d(p,o_3) - t - d(y,z) + a + s. 
\end{align*}
Since $d(p,o_1) = d(p,o_3)$, this rearranges to $d(y,z) \leq a + s - t$, from which we conclude that
\[d(x,y) \leq d(y,z) + d(z,x) \leq 2a + s - t.\]
Assume without loss of generality that $a=1$. Set $e = s-t$. Then $d(x,y)^2/\|F(x) - F(y)\|^2$ is bounded by the function
\[M_2(b,e) = \frac{(2 + e)^2}{\frac{1}{4}\left(1 - b + 2e\right)^2 + \frac{3}{4}(1+b)^2}. \]
In \Cref{lemm:M2}, we show that $M_2(b,e)$ is bounded on the set $\{(b,e): b \geq 0\}$ by $4$. 

On the other hand, if $d(p,y) \leq d(p,z)$, the triangle inequality implies that
\begin{align*}
  d(p,o_3) & \leq d(o_3,z) + d(z,x) + d(x,p) \\ 
     & = d(o_3,z) +a + d(x,p) = d(o_3,y) - d(y,z) + a + d(p,o_1) - d(o_1,y) \\
     & = t - d(z,y) + a + d(p,o_1) - s,  
\end{align*}
which gives $d(z,y) \leq a + t-s$ and hence 
\[d(x,y) \leq d(y,z) + d(z,x) \leq 2a + t - s.\]
%We conclude in both cases that $d(x,y) \leq 2a + |s - t|$. This yields the inequality
In this case, $d(x,y)^2/\|F(x) - F(y)\|^2$ is bounded by the function 
\[\widetilde{M}_2(b,e) = \frac{(2 - e)^2}{\frac{1}{4}\left(1 - b + 2e\right)^2 + \frac{3}{4}(1+b)^2}. \]
%\[\frac{d(x,y)^2}{\|F(x) - F(y)\|^2} \leq \frac{(2a + |s-t|)^2}{\frac{1}{4}\left(a - b + 2s - 2t\right)^2 + \frac{3}{4}\left(a + b\right)^2}. \]
%If $s \geq t$, then $|s-t| = s-t$ and the required upper bound will be established momentarily.
If $s \geq t$, then $e \geq 0$ and we have the relation $\widetilde{M}_2(b,e) \leq M_2(b,e)$. If $s < t$, we claim that one of two possibilities necessarily holds: either $b \geq t-s$, or $w \in I_1$ with $d(p,w) \leq d(p,x)$. If $w \in I_2$, then necessarily $b \geq t$ and hence the first alternative holds. If $w \in I_1$ with $d(p,w) > d(p,x)$, then the triangle inequality gives 
\[d(p,o_1) \leq d(p,y) + d(y,w) + d(w,o_1) = d(p,o_3) - t + b + d(w,o_1),\]
and hence $t - d(w,o_1) \leq b$. Since $d(w,o_1) < s$, this establishes the claim. 

By \Cref{lemm:M2b}, the function $\widetilde{M}_2(b,e)$ is bounded on the set $\{(b,e): b \geq 0, -b \leq e \leq 0\}$ by $13/3$. This covers the case where $b \geq t-s$.  

%If $b \geq t-s$, then. 

%In particular, this covers the case where $s \geq t$ as well. We may assume without loss of generality that $a=1$. Let $e = s-t$. Then $d(x,y)^2/\|F(x) - F(y)\|^2$ is bounded by the function
%\[M_2(b,e) = \frac{(2 + |e|)^2}{\frac{1}{4}\left(1 - b + 2e\right)^2 + \frac{3}{4}(1+b)^2}. \]
%We must maximize $M_2(b,e)$ over the set $\{(b,e): b \geq 0, b \geq -e\}$. In \Cref{lemm:M2}, we show that $M_2(b,e) \leq 13/3$. 

Assume next that $w \in I_1$ with $d(p,w) \leq d(p,x)$. Since $w \in I_1$, we can follow the previous argument with roles of $x$ and $y$ reversed. Since $d(p,w) \leq d(p,x)$, we get the inequality $d(x,y) \leq 2b + t - s$ analogously to the first alternative above, which is handled by \Cref{lemm:M2}. \newline  

\noindent \textit{Case 2.} As the second case, assume now that both $w,z \in I_2$. Let $d_1 = d(r,w) - d(r,o_2)$ and let $d_2 = d(q,z) - d(q,o_2)$. Note that $d_1$ and $d_2$ may be negative. The triangle inequality yields the inequalities
\[ d(p,q) \leq d(p,y) + d(y,w) + d(w,q) = d(p,o_3) - t + b + d(o_2,q) - d_1 \]
and
\[ d(p,r) \leq d(p,y) + d(y,w) + d(w,r) = d(p,o_3) - t + b + d(o_2,r) + d_1. \]
If $d_1 \geq 0$, we use the first inequality and the relation $d(p,q) = d(p,o_3) + d(o_2,q)$ to conclude that $d_1 \leq b-t$. If $d_1 \leq 0$, we use the second inequality and the relation $d(p,r) = d(p,o_3) + d(o_2,r)$ to conclude that $-d_1 \leq b-t$. Together, these show that $|d_1| \leq b-t$. Similarly, we have $|d_2| \leq a-s$. 

This gives 
\[d(z,w) \leq |d_1| + |d_2| \leq a + b - s-t\]
and hence \[d(x,y) \leq d(x,z) + d(z,w) + d(w,y) \leq a + b + |d_1| + |d_2| \leq  2a + 2b - s-t.\] 

We compute
\[\frac{d(x,y)^2}{\|F(x) - F(y)\|^2} \leq \frac{(2a+2b- s-t)^2}{\frac{1}{4}(a - b + 2s -2t)^2 + \frac{3}{4}(a + b)^2} .\]
Assume without loss of generality that $a=1$ and $b \leq 1$. Then $d(x,y)^2/\|F(x) - F(y)\|^2$ is bounded by the function
\[M_3(b,s,t) = \frac{(2+2b- s-t)^2}{\frac{1}{4}(1 - b + 2s -2t)^2 + \frac{3}{4}(1 + b)^2}.\]
Note as well that $s \leq a=1$ and $t \leq b \leq 1$. Thus we must maximize the function $M_3(b,s,t)$ on the set $0 \leq b,s,t \leq 1$. We show in \Cref{lemm:M3} that the maximum value is $16/3$, which is attained when $b=1$ and $s=t=0$.
        \end{proof}

        The remaining two cases are handled similarly to \Cref{lemm:distortion_case3}, and so our proofs omit some details. 

        \begin{lemm} \label{lemm:distortion_case4}
            Suppose that $x \in I_1^p$ and $y \in I_3^r$. Then 
        \[\frac{3}{16} \leq \frac{\|F(x) - F(y)\|^2}{d(x,y)^2} \leq 7.\]
        \end{lemm}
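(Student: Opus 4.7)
The plan is to follow the structure of \Cref{lemm:distortion_case3}, adapted to the geometry of Case 4, where $\bar{x} = (s, 0) \in J_1$ and $\bar{y} = t(-\tfrac{1}{2},-\tfrac{\sqrt{3}}{2}) \in J_3$ sit on different legs of the embedded tripod. Using $F(x) = \bar{x} + a v_1$ and $F(y) = \bar{y} + b v_3$, a direct calculation yields
\[
\|F(x) - F(y)\|^2 = \tfrac{1}{4}(2s + a - b + t)^2 + \tfrac{3}{4}(a + b + t)^2.
\]

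For the upper bound $\|F(x) - F(y)\|^2/d(x,y)^2 \leq 7$, the 1-Lipschitz property of $\varphi \circ P$ gives $d(x,y) \geq \|\bar{x} - \bar{y}\| = \sqrt{s^2 + st + t^2}$ in addition to $d(x,y) \geq \max\{a,b\}$. After normalizing $d(x,y) = 1$, the problem reduces to showing $f(a,b,s,t) = (2s + a - b + t)^2 + 3(a + b + t)^2 \leq 28$ on the region $\{a,b \in [0,1],\, s,t \geq 0,\, s^2 + st + t^2 \leq 1\}$. I would observe that $\partial f/\partial a = 8a + 4b + 4s + 8t \geq 0$, so the maximum occurs at $a = 1$; at $a = 1$ one has $\partial f/\partial b = 4(1 + 2b + t - s) \geq 0$ (using $s \leq 1$), pushing $b$ to $1$. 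One then parameterizes the quarter ellipse $s^2 + st + t^2 = 1$ via $(s,t) = (\cos\varphi - \tfrac{\sin\varphi}{\sqrt{3}},\, \tfrac{2\sin\varphi}{\sqrt{3}})$, $\varphi \in [0,\pi/3]$, and computes $f|_{a=b=1} = 16 + 8\sqrt{3}\sin\varphi$, maximized at $\varphi = \pi/3$ with value $28$. I would defer this optimization to an appendix lemma in the style of $M_1$.

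For the lower bound $\|F(x) - F(y)\|^2/d(x,y)^2 \geq 3/16$, I would mirror the case analysis of \Cref{lemm:distortion_case3} based on the positions of $z$ and $w$. In Case~1 ($z \in I_3$ or $w \in I_1$), the 1-Lipschitz property combined with triangle inequalities along the relevant edges yields a linear upper bound on $d(x,y)$; for instance, $z \in I_3^p$ gives $d(z, o_3) \leq s + a$ and hence $d(x,y) \leq 2a + s + t$, while $z \in I_3^r$ forces $s \leq a$ via the quadratic 1-Lipschitz inequality $s^2 + s\cdot d(z,o_3) + d(z,o_3)^2 \leq a^2$ and produces an analogous bound. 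In Case~2 ($z,w \in I_2$), letting $d_1 = d(r,w) - l_3$ and $d_2 = d(q,z) - l_2$ and applying triangle inequalities at $p$ (for $z$) and at $r$ (for $w$) gives $|d_2| \leq a - s$ and $d_1 \leq b - t$, and combined with $d(z,w) = |d_1 + d_2|$ produces a linear upper bound on $d(x,y)$. An appendix lemma analogous to $M_3$ then verifies that the resulting ratio $d(x,y)^2 / \|F(x) - F(y)\|^2$ is bounded by $16/3$, with extremum attained at $a = b = 1$, $s = t = 0$.

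The main obstacle lies in Case~2: the matching lower bound $d_1 \geq t - b$ used in \Cref{lemm:distortion_case3} relied on $d(p,y) = l_1 - t$ (valid when $y \in I_3^p$), but here $y \in I_3^r$ gives $d(p,y) = l_1 + t$ and the analogous triangle inequality at $p$ yields only $d_1 \geq -(b+t)$. Combining this with $d_1 \leq b - t$ forces $|d_1| \leq b + t$ (rather than $b - t$), producing the asymmetric bound $d(x,y) \leq 2a + 2b + t - s$ in place of the symmetric $2a + 2b - s - t$ of Case~3. The corresponding appendix lemma must handle this asymmetric expression; fortunately the maximum of the ratio still occurs at $s = t = 0$, where the asymmetry disappears, giving the same constant $16/3$.
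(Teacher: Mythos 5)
Your proposal is correct and follows the paper's overall case-analysis structure, but you make a genuinely different (and mildly cleaner) argument for the upper bound. The paper uses the intrinsic tripod distance bound $d(x,y)\geq s+t$ coming from the $1$-Lipschitzness of $P$ alone, then splits into three sub-cases according to which of $a$, $b$, $s+t$ is largest, and needs three separate appendix lemmas ($M_4$, $M_4/(s+t)^2$, $M_4/b^2$). You instead use the Euclidean chordal bound $d(x,y)\geq\|\bar x-\bar y\|=\sqrt{s^2+st+t^2}$ coming from the $1$-Lipschitzness of $\varphi\circ P$, which after normalizing $d(x,y)=1$ packages all constraints into a single region $\{a,b\in[0,1],\ s,t\geq 0,\ s^2+st+t^2\leq 1\}$ and a single optimization. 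Your bound on $d(x,y)$ is weaker (since $s+t\geq\sqrt{s^2+st+t^2}$), so your feasible region is strictly larger than the paper's, but the extremum $a=b=1$, $(s,t)=(0,1)$ still yields exactly $28$, so the conclusion $\|F(x)-F(y)\|^2\leq 7\,d(x,y)^2$ is recovered; the monotonicity checks in $a$ and $b$ and the parameterization of the elliptical boundary are all valid. For the lower bound, your case analysis (whether $z\in I_3$ or $w\in I_1$, versus $z,w\in I_2$) mirrors the paper's, and you correctly identify the asymmetry compared with \Cref{lemm:distortion_case3}: with $y\in I_3^r$ rather than $I_3^p$ one only gets $|d_1|\leq b+t$, producing $d(x,y)\leq 2a+2b-s+t$, matching the paper's inequality and giving the extremal value $16/3$, i.e.\ the lower bound $3/16$. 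One small stylistic remark: in the sub-case $z\in I_3$ you invoke the quadratic inequality $s^2+s\,d(z,o_3)+d(z,o_3)^2\leq a^2$ to get $s\leq a$, but the linear $1$-Lipschitz estimate $s+d(z,o_3)\leq a$ already gives $d(z,o_3)\leq a-s$ directly, which is what you actually need to close the estimate $d(x,y)\leq 2a+s+t$; spelling that step out would tighten the write-up.
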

        \begin{proof}
            We begin with the upper bound. Observe as before that $d(x,y) \geq \max\{a,b,s+t\}$ and that $F(x) = (s + \frac{1}{2}a, \frac{\sqrt{3}}{2}a)$. On the other hand, now $F(y) = (\frac{1}{2}(-t+b), -\frac{\sqrt{3}}{2} (t+b))$, and we obtain
            \[\|F(x) - F(y)\|^2 = \frac{1}{4}\left(a - b + 2s  + t\right)^2 + \frac{3}{4}\left(a + t + b\right)^2 .\]
            Assume now without loss of generality that $a= 1$. If $\max\{a,b,s+t\} = a = 1$, then $b \leq 1$ and $s + t \leq 1$ and so
    \[\frac{\|F(x) - F(y)\|^2}{d(x,y)^2} \leq M_4(b,s,t),\]
    where 
    \[M_4(b,s,t) = \frac{1}{4}\left(1 - b + 2s + t\right)^2 + \frac{3}{4}\left(1 + t + b \right)^2. \]
    Over the region $\{(b,s,t): 0 \leq b,s,t \leq 1, s+t \leq 1\}$, the expression on the right of the previous inequality is maximized by taking $s = 0$, $b = t = 1$ by \Cref{lemm:M4}. This gives the desired upper bound $1/4 + 3/4 \cdot 3^2 = 7$. 

    If $\max\{a,b,s + t\} = s+t$, then
    \[\frac{\|F(x) - F(y)\|^2}{d(x,y)^2} \leq \frac{M_4(b,s,t)}{(s+t)^2}. \]
    Over the region $\{(b,s,t): 0 \leq s,t \leq 1, s+t \geq 1, 0 \leq b \leq s+t\}$, the expression on the right of the previous inequality is again maximized by taking $s = 0$, $b = t = 1$ by \Cref{lemm:M4b}.

    If $\max\{a,b,s+t\} = b$, then 
    \[\frac{\|F(x) - F(y)\|^2}{d(x,y)^2} \leq \frac{M_4(b,s,t)}{b^2}. \]
    Over the region $\{(b,s,t): 0 \leq s,t \leq 1, b \geq 1, s + t\leq b\}$, the expression on the right of the previous inequality is again maximized by taking $s = 0$, $b=t = 1$ by \Cref{lemm:M4c}.

    Next, we show the lower bound. We separate into three cases. \newline

    \noindent \textit{Case 1.}
    Assume first that $z \in I_3$. We claim that $d(x,y) \leq 2a + s + t$. If $d(z,p) \leq d(y,p)$, then we bound $d(x,y)$ as follows. By the triangle inequality, 
    \[d(o_1,p) = d(o_1,x) + d(x,p) = s + d(x,p) \leq s + a + d(z,p).\]
    Since $d(z,p) = d(o_3,p) - d(o_3,z) = d(o_1,p) - d(o_3,z)$, this gives
    \[d(o_3,z) \leq a+s.\]
    Thus $d(x,y) \leq t + d(o_3,z) + a \leq 2a + s + t$. 
    If $d(z,p) > d(y,p)$, we obtain the same bound as follows. By the triangle inequality, 
    \[d(r,p) = d(r,z) + d(z,y) + t + d(o_3,p) \leq d(r,z) + a + d(x,p).\]
    Since $d(o_3,p) = d(o_1,p) = d(x,p) + s$, the previous inequality implies that $d(z,y) \leq a - s - t$ and hence $d(x,y) \leq 2a - s -t \leq 2a + s + t$. It follows that
    \[\frac{d(x,y)^2}{\|F(x) - F(y)\|^2} \leq \frac{(2a + s + t)^2}{\frac{1}{4}\left(a - b + 2s  + t\right)^2 + \frac{3}{4}\left(a + t + b\right)^2}.\] 
    A bound on the right-hand side quantity will be found together with the next case. \newline

    \noindent \textit{Case 2.}
    Assume in the second case that $w \in I_1$. We claim now that $d(x,y) \leq 2b + s + t$. If $d(w,q) \leq d(o_1,q)$, then we have
    \[d(q,r) \leq d(y,q) + b + d(w,r) = d(r,o_3) - t + b + d(q,o_1) - d(w,o_1).\]
    This implies that $t + d(w,o_1) \leq b$. Then 
    \[d(x,y) \leq b + d(w,o_1) + s \leq 2b + s -t.\]
    Next, if $w$ lies between $o_1$ and $x$, then $d(x,y) \leq b+ s \leq 2b+s-t$, since $b \geq t$. If $w$ lies between $x$ and $p$, then \[d(p,r) \leq d(p,o_3) - t + b + d(w,p) \leq d(p,o_3) - t + b + d(o_1,p) - s - d(x,w),\]
    which implies $d(x,y) \leq d(x,w) + d(w,y) \leq 2b - s-t \leq 2b - s + t$.  
    
    It follows in each case that $d(x,y) \leq 2b+s+t$ and hence that
    \[\frac{d(x,y)^2}{\|F(x) - F(y)\|^2} \leq \frac{(2b + s + t)^2}{\frac{1}{4}\left(a - b + 2s  + t\right)^2 + \frac{3}{4}\left(a + t + b\right)^2}.\]
    %The right-hand side is bounded by $M_5(b,s,t)$, and so we obtain the same upper bound of $4$.

    Without loss of generality, assume that $a=1$. Note as well that $s \leq a =1$. In both of the first two cases, $d(x,y)^2/\|F(x) - F(y)\|^2$ is bounded by
    \[M_5(b,s,t) = \frac{(\max\{2,2b\} + s + t)^2}{\frac{1}{4}\left(1 - b + 2s  + t\right)^2 + \frac{3}{4}\left(1 + t + b\right)^2}.\]
    By \Cref{lemm:M5}, the function $M_5(b,s,t)$ is bounded by $13/3$ on the set $\{(b,s,t): 0 \leq b,s,t, 1 \geq s\}$. \newline

    \noindent \textit{Case 3.}
    In the final case, we assume that $z \in I_2$ and $w \in I_2$. Proceeding similarly to Case 2 of \Cref{lemm:distortion_case3}, we obtain the relation $d(x,y) \leq 2a+2b-s+t$. Assume without loss of generality that $a=1$. Then $d(x,y)^2/\|F(x) - F(y)\|^2 \leq M_6(b,s,t)$, where
    \[M_6(b,s,t) = \frac{(2+2b+2t)^2}{\frac{1}{4}\left(1 - b + 2s  + t\right)^2 + \frac{3}{4}\left(1 + t + b\right)^2} .\]
    Note that the numerator of this formula uses the weaker bound $2+2b+2t$ instead of $2+2b-s+t$, but this turns out to simplify the calculations. By \Cref{lemm:M6}, this is bounded on the set $\{(b,s,t): 0 \leq b,s,t\}$ by $16/3$. 
        \end{proof}

        \begin{lemm} \label{lemm:distortion_case5}
            Suppose that $x \in I_1^q$ and $y \in I_3^r$. Then 
        \[\frac{3}{16} \leq \frac{\|F(x) - F(y)\|^2}{d(x,y)^2} \leq 7.\]
        \end{lemm}
        \begin{proof}
            We prove the upper bound. This is similar to the proof of \Cref{lemm:distortion_case4}. As before, we have the lower bound $d(x,y) \geq \max\{a,b,s+t\}$. However, in this case we have $F(x) = (\frac{1}{2}(-s+a), \frac{\sqrt{3}}{2} (s+a))$ $F(y) = (\frac{1}{2}(-t+b), -\frac{\sqrt{3}}{2} (t+b))$, and so
            \[\|F(x) - F(y)\|^2 = \frac{1}{4}\left(a - b - s + t\right)^2 + \frac{3}{4}\left(a + s + t + b \right)^2.\]
            Assume without loss of generality that $a = 1$ and $b \leq 1$. If $s+t \leq 1$, then
            \[\frac{\|F(x) - F(y)\|^2}{d(x,y)^2} \leq M_7(b,s,t),\]
            where
            \[ M_7(b,s,t) = \frac{1}{4}\left(1 - b - s + t\right)^2 + \frac{3}{4}\left(1 + s + t + b \right)^2.\]
            Over the region $\{(b,s,t): 0 \leq b,s,t \leq 1, s+t \leq 1\}$, $M_7(b,s,t)$ is maximized by taking $s = 0$, $t = 1$, $b = 1$ or $s = 1$, $t = 0$, $b = 1$. By \Cref{lemm:M7}, this gives the value $1/4 + 3/4 \cdot 3^2 = 7$, which gives the desired bound. 

            If $s+t \geq 1$, then 
            \[\frac{\|F(x) - F(y)\|^2}{d(x,y)^2} \leq \frac{M_7(b,s,t)}{(s+t)^2}.\]
            Over the region $\{(b,s,t): 0 \leq b,s,t, 1 \leq s+t\}$, the function $M_7(b,s,t)/(s+t)^2$ is maximized by taking  $s = 0$, $t = 1$, $b = 1$ or $s = 1$, $t = 0$, $b = 1$, which by \Cref{lemm:M7b} again gives the desired bound. 

            Next, we show the lower bound. Assume first that $z \in I_3$. Then we can show similarly to in \Cref{lemm:distortion_case4} that $d(y,z) \leq a + t - s$. Actually, for the remainder of the proof it is slightly more convenient to use the weaker bound $d(y,z) \leq a+ t + s$. From this we conclude that $d(x,y) \leq 2a + s+t$. Assume without loss of generality that $a=1$. Then $d(x,y)^2/\|F(x) - F(y)\|^2$ is bounded by the function
            \[M_8(b,s,t) = \frac{(2+s+t)^2}{\frac{1}{4}\left(1 - b - s + t\right)^2 + \frac{3}{4}\left(1 + s + t + b \right)^2}, \]
            where $b,s,t \geq 0$. By \Cref{lemm:M8}, this function is bounded on the given region by $4$. 

            Next, the case that $w \in I_1$ is symmetric to the case that $z \in I_3$, and so we obtain the same bound.
            
            Finally, assume that both $z$ and $w$ are in $I_2$. We can show similarly to Case 2 in \Cref{lemm:distortion_case3} that $d(w,z) \leq a + b + s+t$. From this we conclude that $d(x,y) \leq 2a + 2b + s+t$. Assume without loss of generality that $a=1$. Then $d(x,y)^2/\|F(x) - F(y)\|^2$ is bounded by the function
            \[M_9(b,s,t) = \frac{(2+2b+2s+2t)^2}{\frac{1}{4}\left(1 - b - s + t\right)^2 + \frac{3}{4}\left(1 + s + t + b \right)^2},\]
            where $b,s,t \geq 0$. As in Case 3 of \Cref{lemm:distortion_case4}, we use a weaker bound in the numerator in order to simplify the calculations. By \Cref{lemm:M9}, this function is bounded above by $16/3$. 
        \end{proof}

    \section{Examples} \label{sec:examples}

    We give a detailed analysis of the examples introduced in \Cref{exm:examples}: the intrinsic circle, the three-petal rose, tripods and the twisted heart. Theses are denoted respectively by $S$, $R$, $T$ and $H$.

        \subsection{The intrinsic circle} \label{sec:circle}

    This example represents the ``fattest'' possible metric triangle. 
    The obvious embedding from $S$ into $\mathbb{R}^2$ is the identity map, which has distortion $\pi/2$. This embedding is indeed distortion-minimizing; this follows from a result of Linial--Magen  \cite[Claim 2.1]{LinMag:00} on embedding $n$-cycles into Euclidean space. See also Theorem 35 in \cite{CIM:24}.

        \subsection{The three-petal rose} \label{sec:rose}

    This example provides a lower bound on $\lip(\triangle)$, the minimal distortion required to embed every metric triangle in $\mathbb{R}^2$. 

    \begin{prop} \label{prop:rose_lower_bound}
        Let $G$ be an arbitrary embedding of $R$ into $\mathbb{R}^2$. Then $\lip(G) \geq 2$.  
    \end{prop}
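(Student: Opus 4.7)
The plan is to exhibit six points in $R$ whose mutual distance pattern cannot be realized by any planar embedding with distortion strictly less than $2$. For each circle $S_i$ and each of its two semicircles $S_i^j$ (as in \Cref{exm:examples}(b)), let $m_i^j \in S_i^j$ denote the midpoint, that is, the unique point at arclength $\pi/2$ from both $0$ and the corresponding antipode $v_i \in \{p,q,r\}$ along $S_i^j$. A direct check with the length metric on $R$ gives $d(0, m_i^j) = \pi/2$ for every $(i,j)$, together with $d(m_i^j, m_{i'}^{j'}) = \pi$ for every pair of distinct indices. Indeed, two midpoints on a common $S_i$ can be joined by a length-$\pi$ path either through $0$ or through the shared antipode, while two midpoints on different circles can only be joined by passing through $0$, which again has length $\pi/2 + \pi/2 = \pi$.

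Suppose toward contradiction that $G \colon R \to \mathbb{R}^2$ is a bi-Lipschitz embedding with $\lip(G) < 2$. After rescaling we may assume $G$ is $1$-Lipschitz, so $\|G(x) - G(y)\| \leq d(x,y)$ holds for all $x,y$ and $\|G(x) - G(y)\| > d(x,y)/2$ holds whenever $x \neq y$. Applied to the six midpoints, these inequalities say that the six distinct images $G(m_i^j)$ all lie in the closed disk $\overline{B(G(0), \pi/2)}$, while the pairwise Euclidean distances between them are strictly greater than $\pi/2$.

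It then suffices to establish the elementary planar packing fact that any six points in a closed Euclidean disk of radius $\rho$ contain at least one pair at distance at most $\rho$. I would verify this by sorting the six points by their angular coordinate relative to the disk center $O$; since the six angular gaps sum to $2\pi$, some consecutive gap is at most $\pi/3$. Applying the law of cosines to that consecutive pair, using that each point has distance at most $\rho$ from $O$ and that $\cos(\pi/3) = 1/2$, yields a squared distance bound of $2\rho^2 (1 - \cos(\pi/3)) = \rho^2$. Collinear ties are dispensed with by the observation that two points on a common ray through $O$ both within distance $\rho$ of $O$ are automatically within $\rho$ of each other. Specializing to $\rho = \pi/2$ directly contradicts the strict lower bound from the previous paragraph, which forces $\lip(G) \geq 2$. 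I expect the verification of the packing lemma to be the only mildly delicate step; everything else is straightforward bookkeeping with the length metric on $R$.
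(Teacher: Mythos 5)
Your proof is correct, and it is a genuinely cleaner variant of the argument the paper gives. The paper normalizes $G$ to be non-expanding, fixes a small $\varepsilon>0$, and uses a connectedness/intermediate-value argument to pick, on each of the six arcs $S_i^j$, a point whose \emph{image} lies on the circle of radius $\varepsilon$ about $G(o)$; the angular pigeonhole (some consecutive gap $\leq\pi/3$) then forces a pair at image distance $\leq\varepsilon$ while the preimages are at distance $\geq 2\varepsilon$. You instead use the six \emph{canonical} midpoints $m_i^j$, observe that they all sit at distance exactly $\pi/2$ from $o$ and pairwise at distance exactly $\pi$, and run the same $\pi/3$ pigeonhole as a packing statement in the disk $\overline{B(G(0),\pi/2)}$. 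The two arguments are the same in spirit, but yours has a couple of advantages: the witness points are explicit rather than extracted from the image, and the domain distances are computed once and for all ($d(m_i^j,m_{i'}^{j'})=\pi$ for \emph{every} distinct pair, whether on the same or different circles), so you never need the claim $d(x_i,x_{i+1})=d(x_i,o)+d(o,x_{i+1})$, which for points on the same circle but different semicircles would require checking that the shortest path does not route through the shared antipode. One small point to tighten in your write-up of the packing lemma: the law-of-cosines expression $r_1^2+r_2^2-2r_1r_2\cos\theta$ for $r_1,r_2\le\rho$, $\theta\le\pi/3$ is not in general bounded by $2\rho^2(1-\cos\theta)$ (take $r_2=0$); what is true is that the form is convex in $(r_1,r_2)$, so its maximum over the box $[0,\rho]^2$ is attained at a vertex, and all three nonzero vertex values $\rho^2$, $\rho^2$, $2\rho^2(1-\cos\theta)$ are $\leq\rho^2$ once $\cos\theta\geq 1/2$. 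With that sentence added, the argument is complete.
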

    \begin{proof}
      Without loss of generality, we may assume that $G(o) = 0$ and that $G$ is non-expanding, but that any rescaling of $G$ by a factor $r>1$ is no longer non-expanding. In particular, for all $\varepsilon >0$, we can find points $x,y \in R$ such that $d(x,y)/\|G(x) - G(y)\| \leq 1 + \varepsilon$. 

      The triangle $R$ is the union of six arcs $S_i^1, S_i^2$, $i \in \{1,2,3\}$ with one endpoint $o$ and other endpoint the antipodal point of $S_i$, which we denote by $y_i$.  
      
      Choose a value $\varepsilon \in (0, 1/(2\lip(G)))$. Observe that $\|G(y_i)\| \geq \varepsilon$ for each $i$. In particular, since each set $G(S_i^j)$ is connected, we can find a point $x_i^j \in S_i^j$ such that $\|G(x_i^j)\|= \varepsilon$. 

      Enumerate the points $x_i^j$ based on the order of $G(x_i^j)$ along the circle $S(0,\varepsilon)$ (with the counterclockwise orientation) from the positive $x$-axis as $x_1,x_2, \ldots, x_6$. We can find consecutive points $x_i, x_{i+1}$ (with the identification $x_7= x_1$) such that the angle $\angle(0; G(x_i), G(x_{i+1})) \leq \pi/3$. In particular, $\|G(x_i) - G(x_{i+1})\| \leq \varepsilon$. On the other hand, since $G$ is non-expanding, $d(x_i,x_{i+1}) = d(x_i,o) + d(0,x_{i+1}) \geq 2\varepsilon$. 

      We conclude that \[\frac{d(x_i, x_{i+1})}{\|G(x_i) - G(x_{i+1})\|} \geq \frac{2\varepsilon}{\varepsilon} = 2,\] 
      and hence that the distortion of $G$ is at least $2\varepsilon/\varepsilon = 2$.
    \end{proof}

The tripodal embedding $F$ for the three-petal rose has distortion $\lip(F) = 2$. Alternatively, let $X$ be the union of three equilateral triangles with the same side length and sharing a common vertex, each separated by an angle of $\pi/3$. Then any arc length-preserving homeomorphism from $R$ onto $X$ also has distortion $2$. 

\begin{comment}
\begin{tikzpicture}
    \filldraw[gray] (0,0)circle (2pt);
    \filldraw[gray] (4,0)circle (2pt);
    \filldraw[gray] (-2,-3.46)circle (2pt);
    \filldraw[gray] (-2,3.46)circle (2pt);
    \draw[color=gray] (0,0) to [bend right=30] (4,0);
    \draw[color=gray] (0,0) to [bend left=30] (4,0);
    \draw[color=gray] (0,0) to [bend right=30] (-2,-3.46);
    \draw[color=gray] (0,0) to [bend left=30] (-2,-3.46);
    \draw[color=gray] (0,0) to [bend right=30] (-2,3.46);
    \draw[color=gray] (0,0) to [bend left=30] (-2,3.46);
    \draw (2,.5) node[anchor=south]{$S_1^1$};
    \draw (2,-.5) node[anchor=north]{$S_1^2$};
    \draw (-1.43,1.48) node[anchor=east]{$S_2^1$};
    \draw (-.57,1.98) node[anchor=west]{$S_2^2$};
    \draw (-.57,-1.98) node[anchor=west]{$S_3^1$};
    \draw (-1.43,-1.48) node[anchor=east]{$S_3^2$};
    \filldraw[gray](.6,.3) circle (2pt) node[anchor=south]{$x_i$};
    \filldraw[gray](-.5,.34) circle (2pt) node[anchor=east]{$x_{i+1}$};
\end{tikzpicture}
\end{comment} 
    
	\subsection{Tripods} \label{sec:tripod}

    As we have seen, tripods play a key role in the proof of \Cref{thm:embedding}. The class of tripods represents the ``thinnest'' possible metric triangles. We verify the natural expectation that the embedding $F$ from the previous section is distortion minimizing for this class of triangles. Note in this case that $d(x,\widehat{I}_j) = 0$ for all $x \in I_j$ ($j = 1,2,3$), and so $F(x) = \bar{x}$ for all points $x$. 

    \begin{prop}
        Let $T$ be a tripod that is non-degenerate, meaning that $p_i>0$ for all $i \in \{1,2,3\}$. Then $\lip(T,\mathbb{R}^2) = 2/\sqrt{3}$, and this is obtained by the tripodal embedding.  
    \end{prop}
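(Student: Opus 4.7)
The plan is to prove both the upper bound (achieved by the tripodal embedding) and the matching lower bound (over all embeddings) separately.

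For the upper bound, I would first observe that the formula for the tripodal embedding simplifies drastically on a tripod. Since each edge $I_j$ of the metric triangle structure on $T$ is the union of two legs sharing the center $o$, we have $I_j \cup \widehat{I}_j = T$, which forces $\dist(x,\widehat{I}_j)=0$ for every $x$ and every relevant $j$. Thus $F = \varphi \circ P$, and since $P$ is an isometry here (the Gromov products reproduce the leg lengths), the tripodal embedding is just the "Y-shape" embedding sending each leg $T_i$ isometrically onto the segment from $0$ to $u_i$. On a single leg $F$ is an isometry. For $x\in T_i$ and $y\in T_j$ with $i\neq j$ and $s=d(x,o)$, $t=d(y,o)$, one computes $d(x,y)=s+t$ and, using the law of cosines with angle $2\pi/3$ between the legs,
\[
\|F(x)-F(y)\|=\sqrt{s^2+st+t^2}.
\]
Since $(s+t)^2 \geq s^2+st+t^2$, the map $F$ is $1$-Lipschitz; maximizing $(s+t)/\sqrt{s^2+st+t^2}$ over $s,t>0$ (homogeneous, symmetric, max at $s=t$) gives the value $2/\sqrt{3}$, so $\lip(F)=2/\sqrt{3}$.

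For the lower bound, let $f\colon T \to \mathbb{R}^2$ be a bi-Lipschitz embedding. After rescaling we may assume $f$ is $1$-Lipschitz, so it suffices to produce two points whose distance ratio is at least $2/\sqrt{3}$. Since all $l_i>0$, fix any $\varepsilon\in(0,\min_i l_i]$ and choose $x_i\in T_i$ with $d(x_i,o)=\varepsilon$ for $i=1,2,3$, so that $d(x_i,x_j)=2\varepsilon$ whenever $i\neq j$. Set $v_i = f(x_i)-f(o)$; by $1$-Lipschitz-ness, $|v_i|\leq \varepsilon$. The key observation is planar: three non-zero vectors in $\mathbb{R}^2$ emanating from a common point partition the full angle $2\pi$, so some pair $(v_i,v_j)$ subtends an angle $\theta\leq 2\pi/3$, i.e.\ $\cos\theta \geq -1/2$.

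For this pair, the law of cosines gives
\[
\|f(x_i)-f(x_j)\|^2 = |v_i|^2+|v_j|^2 - 2|v_i||v_j|\cos\theta
\leq |v_i|^2+|v_j|^2+|v_i||v_j|
\leq 3\varepsilon^2,
\]
using the elementary bound $a^2+ab+b^2 \leq 3\max(a,b)^2$. Hence $\|f(x_i)-f(x_j)\|\leq \varepsilon\sqrt{3}$, while $d(x_i,x_j)=2\varepsilon$, so
\[
\lip(f) \geq \frac{d(x_i,x_j)}{\|f(x_i)-f(x_j)\|} \geq \frac{2}{\sqrt{3}}.
\]
Combined with the upper bound from the first paragraph, this yields $\lip(T,\mathbb{R}^2)=2/\sqrt{3}$, attained by the tripodal embedding.

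The argument is essentially a pigeonhole-in-the-plane, and I do not anticipate any real obstacle. The only delicate points are the routine verification that the tripodal embedding formula collapses to the Y-embedding on a tripod, and ensuring non-degeneracy so that points $x_i$ at distance $\varepsilon$ from $o$ exist on every leg; the hypothesis $l_i>0$ handles the latter.
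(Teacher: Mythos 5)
Your proposal is correct. The upper-bound half is essentially the paper's computation: on a tripod the $\dist(\cdot,\widehat I_j)$ term vanishes because $\widehat I_j = T$ for every edge, so $F=\varphi\circ P$ is the isometric Y-embedding, and the worst ratio $(s+t)/\sqrt{s^2+st+t^2}=2/\sqrt3$ is attained at $s=t$ (the paper's function $M(s)=(s+1)^2/(s^2+s+1)$, maximized at $s=1$). For the lower bound, the paper only sketches the argument, stating it is the analogue of the three-petal rose lower bound (Proposition 4.2) with three arcs instead of six; that argument normalizes $G$, uses connectedness plus the intermediate value theorem to find a point on each arc whose \emph{image} lies exactly on the circle $S(0,\varepsilon)$, and then pigeonholes. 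Your route is a mild but genuine simplification: you instead pick points $x_i$ at a fixed small distance $\varepsilon$ from $o$ in the \emph{domain}, which exist by the non-degeneracy hypothesis, and let the $1$-Lipschitz normalization give $|v_i|\le\varepsilon$. The pigeonhole on three vectors (some pair has angle $\le 2\pi/3$, hence $\cos\theta\ge -1/2$) and the law-of-cosines bound $|v_i|^2+|v_i||v_j|+|v_j|^2\le 3\varepsilon^2$ then yield $\lip(f)\ge 2/\sqrt3$ directly, with no IVT step and no need for a delicate choice of normalization. This sidesteps, in particular, the issue of controlling how far from $o$ the preimages of circle points may lie. The core combinatorial idea is the same as the paper's, but your version is cleaner and self-contained.
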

    \begin{proof}
        We show that the tripodal embedding $F$ has distortion $2/\sqrt{3}$. 
        If $x,y \in T_i$ for the same $i \in \{1,2,3\}$, then the tripodal embedding satisfies $\|F(x) - F(y)\| = d(x,y)$. Next, assume that $x \in T_i$ and $y \in T_j$ for some $i \neq j$. Assume that $i=1$ and $j=2$. Let $s = d(x,o)$ and $t = d(y,o)$. Then 
        \[\frac{\|F(x) - F(y)\|}{d(x,y)} \leq \frac{\|F(x) - F(o)\|}{d(x,y)} + \frac{\|F(o) - F(y)\|}{d(x,y)} = \frac{s}{s+t} + \frac{t}{s+t}\ = 1,\]
        and
        \[\frac{d(x,y)}{\|F(x) - F(y)\|} = \frac{s+t}{\|(s+\frac{t}{2}, \frac{\sqrt{3}t}{2}) \|} = \frac{s+t}{\sqrt{(s+\frac{t}{2})^2 + \frac{3}{4}t^2 }}.\]
        For fixed $t$, the expression on the right-hand side attains a maximum when $s=t$. To check this, assume without loss of generality that $t=1$ and let \[M(s) = \frac{(s+1)^2}{(s+\frac{1}{2})^2 + \frac{3}{4}},\]
        defined for $s \geq 0$. We compute $M'(s) = (1 - s^2)/(1 + s + s^2)^2$. Thus we have a single critical point at $s=1$. We evaluate $M(1) = 4/3$. Also, $M(0) = 1$ and $\lim_{s \to \infty} M(s) = 1$, which establishes that $M(1)$ is a maximum. 

        The proof that no embedding can have smaller distortion is similar to the proof of \Cref{prop:rose_lower_bound} but with three arcs emanating from the center point $o$ rather than six. We omit the details.
    \end{proof}

    \subsection{The twisted heart} \label{sec:heart}

    This example shows the sharpness of \Cref{thm:embedding}. Observe first that $d(b,e) = d(c,a) = 1$ and $d(a,b) = 4$. Moreover, $\bar{a} = \bar{b} = (0,0)$ and $\bar{c} = (-\frac{1}{2}, \frac{\sqrt{3}}{2})$. We now compute that $F(a) = \bar{a} + v_1 = (\frac{1}{2}, \frac{\sqrt{3}}{2})$, $F(b) = \bar{b} + \bar{v}_3 = (\frac{1}{2}, -\frac{\sqrt{3}}{2})$ and $F(c) = \bar{c} + v_2 =  (-\frac{3}{2},-\frac{\sqrt{3}}{2})$. This yields the equalities
    \[\frac{\|F(a) - F(b)\|}{d(a,b)} = \frac{\|(0,\sqrt{3})\|}{4} = \frac{\sqrt{3}}{4}\]
    and 
    \[\frac{\|F(a) - F(c)\|}{d(a,c)} = \|(2,\sqrt{3})\| = \sqrt{7}.\]
    Thus the tripodal embedding on the twisted heart has distortion $4\sqrt{7/3}$.

    The natural follow-up is to what extent this value can be reduced by using a different embedding. One candidate is to map $H$ to an equilateral triangle in the obvious way; this gives a distortion of $2 \sqrt{7} \approx 5.29$. By perturbing the image points, it seems possible from computations to reduce the distortion to roughly $4$. We leave this example as an area for further exploration. 

        \begin{ques}
        What is $\lip(H,\mathbb{R}^2)$, and what embedding attains it?
    \end{ques}

 \section{Metric polygons} \label{sec:quadrilaterals}

 In this section, we prove \Cref{prop:bad_quadrilaterals}, which shows that the embedding result for metric triangles does not extend to the case of metric quadrilaterals.

 We define for all $\varepsilon>0$ a quadrilateral $Q = Q(\varepsilon)$ as follows. First we define a corresponding superset $\widetilde{Q}$. Start with a circle of length $4$ divided into four subarcs of length $1$, enumerated in cyclic order as $I_1, I_2, I_3, I_4$. For each $j = 1,\ldots, 4$, let $p_j$ denote the initial point of $I_j$ and $m_j$ denote the midpoint. Let $I_5$ be an interval of length $\varepsilon$ connecting the point $p_1$ to $p_3$, and let $I_6$ be an interval of length $\varepsilon$ connecting the point $p_2$ to $p_4$. Let $I_7$ be an interval of length $\varepsilon$ connecting $m_1$ and $m_3$. We now define $\widetilde{Q}$ to be the union of $I_1, \ldots, I_{7}$. We equip $\widetilde{Q}$ with the length metric, denoted by $d$. We then take $Q$ to be the subspace $I_1 \cup \cdots \cup I_4$. One checks that $Q$ is a metric quadrilateral with vertices $p_1,\ldots, p_4$.

 \Cref{prop:bad_quadrilaterals} is a consequence of the following proposition. 

 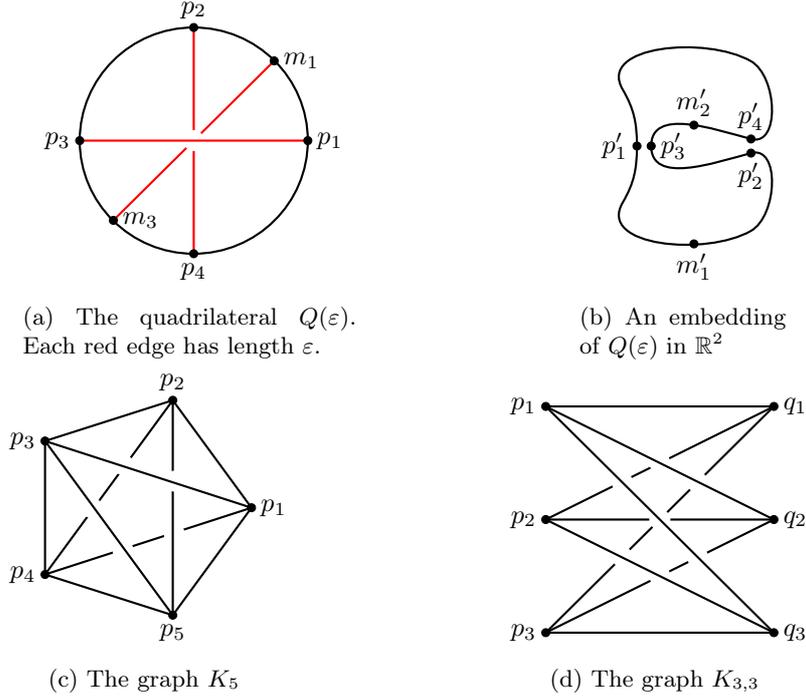
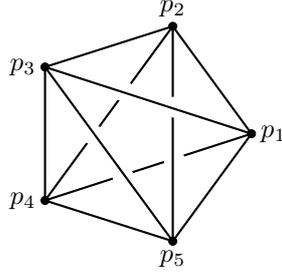
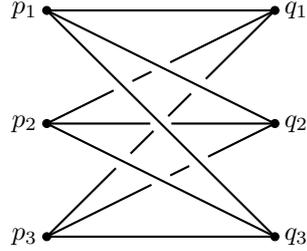
\begin{figure} 
    \centering
    \hfill
    \subfloat[The quadrilateral $Q(\varepsilon)$. Each red edge has length $\varepsilon$.]{
        \begin{tikzpicture}[scale=.75]
        \draw[black, thick] (0,0) circle (2);
        \draw[red,thick] (0,2) -- (0,-2);
        \draw[red,thick] (1.41,1.41) -- (-1.41,-1.41);
        \filldraw[white] (0,0) circle (5pt);
        \draw[red, thick] (2,0)--(-2,0);        
        \filldraw[black] (2,0) circle (2pt) node[anchor=west]{$p_1$};
        \filldraw[black] (0,2) circle (2pt) node[anchor=south]{$p_2$};
        \filldraw[black] (-2,0) circle (2pt) node[anchor=east]{$p_3$};
        \filldraw[black] (0,-2) circle (2pt) node[anchor=north]{$p_4$};
        \filldraw[black] (1.41,1.41) circle (2pt) node[anchor=west]{$m_1$};
        \filldraw[black] (-1.41,-1.41) circle (2pt) node[anchor=west]{$m_3$};
        %\filldraw[black] (5,0) circle (2pt) node[anchor=north]{$\bar{p}$};
        %\filldraw[black] (0,0) circle (2pt) node[anchor=east]{$o$};
        %\draw[gray, thick] (1,-1) -- node[below] {$\frac{1}{\sqrt{3}}d(x,y)$} (3,-1);
        \end{tikzpicture}
    \label{fig:Q}}
    \hfill
    \subfloat[An embedding of $Q(\varepsilon)$ in $\mathbb{R}^2$ and curve $S$ (in blue)]{
        \begin{tikzpicture}[scale=.75]
        \draw[black, thick] (-1.25,0) to[out=270,in=60] (-1.5,-1) to[out=240, in=210] (1,-1.75) to[out=30,in =15] (1,-.375) to[out=195,in=270] (-.75,0) to[out=90,in=165] (1,.375) to[out=-15,in=-30] (1,1.75) to[out=150, in=120] (-1.5,1) to[out=300,in=90] (-1.25,0);
        \draw[blue, very thick] (1,-.375) to (1,.375) to[out=-15,in=-30] (1,1.75) to[out=150, in=120] (-1.5,1) to[out=300,in=90] (-1.25,0);
        \draw[blue, very thick] (1,-.375) to[out=195,in=270] (-.75,0) to (-1.25,0);
        \filldraw[black] (-1.25,0) circle (2pt) node[anchor=east]{$j_1$};
        \filldraw[black] (1,-.375) circle (2pt) node[anchor=north]{$j_2$};
        \filldraw[black] (-.75,0) circle (2pt) node[anchor=west]{$j_3$};
        \filldraw[black] (1,.375) circle (2pt) node[anchor=south]{$j_4$};
        \filldraw[black] (0,-1.98) circle (2pt) node[anchor=north]{$m_1'$};
        \filldraw[black] (0,.53) circle (2pt) node[anchor=south]{$m_3'$};
        \end{tikzpicture}
    \label{fig:Qembed}}
    \hfill \hfill

    \hfill
    \subfloat[The graph $K_5$]{
        \begin{tikzpicture}[scale=.75]
        \filldraw[black] (2,0) circle (2pt) node[anchor=west]{$p_1$};
        \filldraw[black] (.62,1.9) circle (2pt) node[anchor=south]{$p_2$};
        \filldraw[black] (-1.62,1.18) circle (2pt) node[anchor=east]{$p_3$};
        \filldraw[black] (-1.62,-1.18) circle (2pt) node[anchor=east]{$p_4$};
        \filldraw[black] (.62,-1.9) circle (2pt) node[anchor=north]{$p_5$};
        \draw[black, thick] (2,0) to (.62,1.9) to (-1.62,1.18) to (-1.62,-1.18) to (.62, -1.9) to (2,0);
        \draw[black, thick] (-1.62, -1.18) to (2,0);
        \filldraw[white] (-.25,-.76) circle (5pt);
        \filldraw[white] (.65,-.47) circle (5pt);
        \draw[black, thick] (.62, -1.9) to (.62,1.9) to (-1.62,-1.18);
        \filldraw[white] (.65,.47) circle (5pt);
        \filldraw[white] (-.25,.76) circle (5pt);
        \filldraw[white] (-.8,0) circle (5pt);
        \draw[black, thick] (2,0) to (-1.62,1.18) to (.62, -1.9);
        %\draw[black, thick] (0,0) to[out=-30,in=270] (2,0) to[out=90,in=30] (0,0) to[out=90,in=30] (-1,1.73) to[out=210,in=150] (0,0) to[out=210,in=150] (-1,-1.73) to[out=-30,in=270] (0,0);
        \end{tikzpicture}
    \label{fig:K5}}
    \hfill
    \subfloat[The graph $K_{3,3}$]{
             \begin{tikzpicture}[scale=.75]
        \filldraw[black] (-2,2) circle (2pt) node[anchor=east]{$p_1$};
        \filldraw[black] (-2,0) circle (2pt) node[anchor=east]{$p_2$};
        \filldraw[black] (-2,-2) circle (2pt) node[anchor=east]{$p_3$};
        \filldraw[black] (2,2) circle (2pt) node[anchor=west]{$q_1$};
        \filldraw[black] (2,0) circle (2pt) node[anchor=west]{$q_2$};
        \filldraw[black] (2,-2) circle (2pt) node[anchor=west]{$q_3$};
        \draw[black, thick] (-2,-2)--(2,2);
        \draw[black, thick] (-2,-2)--(2,0);
        \draw[black, thick] (-2,-2)--(2,-2);
        \filldraw[white] (0,-1) circle (5pt);
        \filldraw[white] (.66,-.66) circle (5pt);
        \filldraw[white] (-.66,-.66) circle (5pt);
        \draw[black, thick] (-2,0)--(2,2);
        \draw[black, thick] (-2,0)--(2,0);
        \draw[black, thick] (-2,0)--(2,-2);
        \filldraw[white] (0,1) circle (5pt);
        \filldraw[white] (0,0) circle (5pt);
        \filldraw[white] (-.66,.66) circle (5pt);\filldraw[white] (.66,.66) circle (5pt);
        \draw[black, thick] (-2,2)--(2,2);
        \draw[black, thick] (-2,2)--(2,0);
        \draw[black, thick] (-2,2)--(2,-2);
        %\filldraw[black] (5,0) circle (2pt) node[anchor=north]{$\bar{p}$};
        \end{tikzpicture}
    \label{fig:K33}}
    \hfill \hfill
    \caption{The metric polygons in \Cref{sec:quadrilaterals}.}
    \label{quadrilaterals}
\end{figure}

 \begin{prop}
     Fix $L \geq 1$ and choose $0 < \varepsilon < 1/(6L^4)$. Then the quadrilateral $Q = Q(\varepsilon)$ is not $L$-bi-Lipschitz embeddable in the plane. 
 \end{prop}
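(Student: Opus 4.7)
The plan is to suppose for contradiction that $f\colon Q \to \mathbb{R}^2$ is an $L$-bi-Lipschitz embedding and derive an inequality contradicting $\varepsilon < 1/(6L^4)$. Since $f$ is injective, $\gamma = f(Q)$ is a Jordan curve in $\mathbb{R}^2$. I will label the six distinguished points on $\gamma$, in cyclic order, as $v_1 = f(p_1)$, $v_2 = f(m_1)$, $v_3 = f(p_2)$, $v_4 = f(p_3)$, $v_5 = f(m_3)$, $v_6 = f(p_4)$. The three short diagonals $I_5, I_6, I_7$ of $\widetilde{Q}$ have length $\varepsilon$, so the bi-Lipschitz property yields $\|v_1 - v_4\|, \|v_2 - v_5\|, \|v_3 - v_6\| \leq L\varepsilon$, while any two consecutive $v_i$'s lie at plane distance at least $1/(2L)$ since their preimages have $Q$-distance at least $1/2$.

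The key structural observation is that the drawing obtained by joining the three antipodal pairs $(v_1, v_4)$, $(v_2, v_5)$, $(v_3, v_6)$ by straight segments $\sigma_{14}, \sigma_{25}, \sigma_{36}$, together with the six-cycle inherited from $\gamma$, realizes the graph $K_{3,3}$ with bipartition $\{v_1, v_3, v_5\}$ and $\{v_2, v_4, v_6\}$. Since $K_{3,3}$ is non-planar by Kuratowski's theorem, at least two edges of this drawing must intersect at a non-vertex point; since $\gamma$ is simple, no two sub-arcs cross, so the intersection must involve at least one of the three straight diagonals.

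If two diagonals cross at a point $y$, then $y$ lies within $L\varepsilon$ of each of the four relevant endpoints, so $\|v_i - v_j\| \leq 2L\varepsilon$ for some pair of endpoints on different diagonals; combined with $\|v_i - v_j\| \geq 1/(2L)$ this already forces $\varepsilon \geq 1/(4L^2)$. The substantive case is a straight diagonal $\sigma$ crossing the interior of a sub-arc of $\gamma$ at a point $y = f(x)$. Writing $v$ for one of the endpoints of $\sigma$ and $p = f^{-1}(v)$, the bound $\|y - v\| \leq L\varepsilon$ converts via bi-Lipschitz to $d(x, p) \leq L^2\varepsilon$, and inspection of the $Q$-metric (using $d(p_1, p_3) = \varepsilon$ and similar identities) forces $x$ onto an arc incident to $p$ of $Q$-length at most $(L^2-1)\varepsilon$, so that $y$ lies at $\gamma$-arc-length at most $L^3\varepsilon$ from $v$.

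The main obstacle will be handling this sub-arc crossing case, which I plan to resolve by rerouting. I will replace each straight diagonal $\sigma$ by a Jordan arc $\tilde\sigma$ with the same endpoints whose interior avoids $\gamma$, inserting small detours along curves offset slightly from $\gamma$ near the endpoints where crossings occur; each detour has length of order $L^3\varepsilon$, so the rerouted arc has total length at most $CL^3\varepsilon$ for a small absolute constant $C$. Reapplying $K_{3,3}$ non-planarity to the three rerouted arcs forces two of them to cross, and the resulting crossing point lies within $CL^3\varepsilon$ of an endpoint on each, yielding $\|v_i - v_j\| \leq 2CL^3\varepsilon$ for some pair of endpoints across two different diagonals. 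Combined with the adjacency bound $\|v_i - v_j\| \geq 1/(2L)$, this produces $\varepsilon \geq 1/(4CL^4)$; the delicate remaining task is the explicit length accounting in the rerouting step to achieve $C \leq 3/2$ and thereby the stated bound $\varepsilon \geq 1/(6L^4)$.
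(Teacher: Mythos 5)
Your $K_{3,3}$ observation is correct and elegant: the superset $\widetilde{Q}$ is exactly a subdivision of $K_{3,3}$ (vertices $p_1, m_1, p_2, p_3, m_3, p_4$ on the six-cycle, with the three diagonals $I_5, I_6, I_7$), and this is a genuinely different route from the paper's argument, which builds two short auxiliary arcs $K_1, K_2$ near the images of $p_1p_3$ and $p_2p_4$, shows they are disjoint, and then uses a Jordan-curve separation theorem (that $K_1\cup K_2\cup f(I_2)\cup f(I_4)$ contains a simple closed curve separating $f(m_1)$ from $f(m_3)$) to contradict the closeness of $f(m_1)$ and $f(m_3)$. Both proofs ultimately exploit the same structural fact — that the preimages of nearby crossing points in $\gamma=f(Q)$ are forced to lie near specific vertices — but the paper never needs the diagonal arcs to avoid $\gamma$; its $K_1$ deliberately runs along two short subarcs of $\gamma$ and one chord, which is harmless for a separation argument but fatal for a topological graph drawing.

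The gap in your argument is exactly the rerouting step you flagged, and it is not merely a matter of length accounting. To apply $K_{3,3}$ non-planarity you need each diagonal $\tilde\sigma$ to be an arc whose interior is genuinely disjoint from $\gamma$; the straight chord enters and exits both complementary components near its endpoints, so the rerouted arc must be confined to the closure of a single component, and the short pieces of $\gamma$ that you must bypass lie on the boundary. Making this precise requires the kind of construction the paper carries out for $K_1$ (maximal chord sub-interval $\widetilde J_1$ with endpoints $j_1^1\in f(I_1\cup I_4)$, $j_1^2\in f(I_2\cup I_3)$, plus two boundary subarcs of length $\le L^3\varepsilon$), followed by an offset into the interior. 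That offset is plausible but itself needs justification (the boundary is only a bi-Lipschitz curve), and in any case it yields length $\approx 2L^3\varepsilon + L\varepsilon \le 3L^3\varepsilon$, i.e., $C\approx 3$, not $C\le 3/2$. Moreover, the non-planarity argument only tells you that \emph{some} two diagonals cross; the crossing may well involve the $(m_1, m_3)$ diagonal, whose endpoints are at $\widetilde{Q}$-distance $1/2$ from the nearest endpoints of the other two diagonals, so the resulting bound is $1/(2L)\le 6L^3\varepsilon$, i.e., $\varepsilon\ge 1/(12L^4)$, not the stated $1/(6L^4)$. Your route therefore proves a quantitatively weaker version of the proposition (still sufficient for \Cref{prop:bad_quadrilaterals}), and closing the gap to the stated constant would require either sharpening the rerouting or forcing the crossing to involve only $p_1p_3$ and $p_2p_4$, neither of which is addressed in the proposal.
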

 \begin{proof}
    Assume that there is an $L$-bi-Lipschitz embedding $f \colon Q \to \mathbb{R}^2$. Given a point $x \in Q$, we write $x'$ in place of $f(x)$. 
    Observe that $f(Q)$ is a simple closed curve that, by the Jordan curve theorem, separates the plane and has two complementary components.

    Let $\widetilde{J}_1$ be the straight line segment connecting $p_1'$ to $p_3'$, which has length at most $L\varepsilon$. We can find a maximal closed subinterval $J_1$ with interior contained in $\mathbb{R}^2 \setminus f(Q)$ such that one endpoint $j_1 \in \widetilde{J}_1$ belongs to $f(I_1 \cup I_4)$ and the other endpoint $j_3 \in \widetilde{J}_1$ belongs to $f(I_2 \cup I_3)$. Observe that necessarily $\|p_1' - j_1\| < L\varepsilon$ and $\|p_3' - j_3\| < L\varepsilon$. We define $J_2$ and points $j_2$ and $j_4$ likewise, with $\|p_2' - j_2\| < L \varepsilon$ and $\|p_4' - j_4\| < L \varepsilon$. We further observe that $J_1$ and $J_2$ do not intersect; if there were a point $z \in J_1 \cup J_2$, then we would have $\|p_1' - p_2'\| \leq \|p_1' - z\| + \|z - p_2'\| \leq 2L\varepsilon$, which contradicts the statement that $\|p_1' - p_2'\| \geq 1/L$. Thus $J_1$ and $J_2$ lie in different complementary components of $f(Q)$. 

    We claim that the points $j_1$, $m_1'$, $j_2$, $j_3$, $m_3'$, $j_4$ are arranged cyclically along the simple closed curve $f(Q)$ in this order. To see this, let $H_i$ denote the shorter subarc of $f(Q)$ from $p_i'$ to $j_i$ for each $1 \leq i \leq 4$. Each $H_i$ has length at most $L^3\varepsilon$, since
    \[\ell(H_i) \leq L \ell(f^{-1}(H_i)) = Ld(p_1, f^{-1}(j_1)) \leq L^2\|p_1' - j_1\| \leq L^3\varepsilon.\]
    On the other hand, $\|m_1' - p_1'\| \geq 1/(2L) > L^3 \varepsilon$, which implies that $m_1'$ cannot lie on the subarc $H_1$. Similar relations hold for other pairs of points. We conclude that the points $j_1$, $m_1'$, $j_2$, $j_3$, $m_3'$, $j_4$ are ordered the same as $p_1'$, $m_1'$, $p_2'$, $p_3'$, $m_3'$, $p_4'$, which establishes the claim.

    Now let $K_1$ be the subarc of $f(Q)$ from $j_2$ to $j_3$ and $K_2$ be the subarc of $f(Q)$ from $j_4$ to $j_1$ given by the previous ordering, and let $S$ be the concatenation of $K_1$, $J_1$, $K_2$ and $J_2$. Then $S$ is a simple closed curve for which $m_1'$ and $m_3'$ lie in different complementary components. See \Cref{fig:Qembed}. Now $\|m_1' - m_3'\| \leq L\varepsilon$, which implies that $m_1'$ and $m_3'$ lie at distance at most $L\varepsilon < (6L^3)^{-1} \leq (6L)^{-1}$ from $S$.

    On the other hand, the distance from $m_1'$ to $K_1$ is at least 
    \[\text{dist}_{\text{Euc}}(m_1', f(I_2)) - \max\{\ell(H_2), \ell(H_3)\} \geq \frac{1}{2L} - L^3\varepsilon \geq \frac{1}{2L} - \frac{1}{6L} \geq \frac{1}{3L}.\] Likewise, the distance from $m_1'$ to $J_1$ is at least 
    \[\|m_1' -p_1'\| - \ell(\widetilde{J}_1) \geq \frac{1}{2L} - L\varepsilon \geq \frac{1}{3L}.\] 
    The same bound holds for $K_2$ and $J_2$, and so $d_{\text{Euc}}(m_1',S) \geq (3L)^{-1}$. This gives a contradiction, and the proposition follows.  
 \end{proof}

\begin{rem}
    One might initially assume that taking $\varepsilon = 0$ in the previous construction gives a metric quadrilateral that cannot be bi-Lipschitz embedded in $\mathbb{R}^2$. While indeed we do get a metric quadrilateral in this way, it is possible to bi-Lipschitz embed it in $\mathbb{R}^2$. The previous proof relies on the fact that a simple closed curve separates the plane. However, if $\varepsilon=0$, the corresponding quadrilateral is not a simple closed curve, and there is more flexibility for where to map each edge.
\end{rem}

 \begin{exm} \label{exm:pentagon}
    A metric pentagon may fail to even be topologically embeddable in $\mathbb{R}^2$. There are two basic examples of non-planar graphs: $K_5$, the complete graph on five vertices, and $K_{3,3}$, the complete bipartite graph on two sets of three vertices. Each of these can be realized as a metric pentagon.

    We provide the details for $K_5$. Denote its topological vertices in cyclic order by $p_1, p_2, \ldots, p_5$. Denote the topological edge from $p_i$ to $p_j$ by $I_{ij}$. We make $K_5$ into a metric space by assigning each edge length $1$ and taking the length metric. For each $i = 1,\ldots, 5$, let $m_i$ be the midpoint of $I_{i(i+1)}$ (with the usual identification of $I_{56}$ with $I_{51}$). The space $K_5$ can be viewed as a metric pentagon by taking $m_1, m_3, m_5, m_2, m_4$ as its vertices (as a metric polygon) in cyclic order. The edges (as a metric polygon) are $[m_1p_1] * I_{14} * [p_4m_3]$, $[m_3p_3] * I_{31} * [p_1m_5]$, $[m_5p_5] * I_{53} * [p_3m_2]$, $[m_2p_2] * I_{25} * [p_5m_4]$, $[m_4p_4] * I_{42} * [p_2m_1]$. Each of these edges has length $2$.

    Note that, for both metric spaces $K_5$ and $K_{3,3}$ (with the standard length metric just described), any two points are at distance at most $2$ apart. This implies that any edge (as a metric polygon) has length at most $2$. In particular, these spaces cannot be realized as metric quadrilaterals. 
 \end{exm}

 \section{Appendix} \label{sec:appendix}

 In this appendix, we provide the proofs of the various inequalities used in \Cref{sec:embedding}. These are mostly straightforward calculus optimization problems, but we include the details for completeness. We recall the following functions defined in \Cref{sec:embedding}:
\begin{align*}
    & M_1(b,e) = \frac{1}{4}\left(1 - b + 2e \right)^2 + \frac{3}{4} \left(1 + b\right)^2;  \\
    & M_2(b,e) = \frac{(2 + e)^2}{\frac{1}{4}\left(1 - b + 2e\right)^2 + \frac{3}{4}(1+b)^2};  \\
    & \widetilde{M}_2(b,e) = \frac{(2 - e)^2}{\frac{1}{4}\left(1 - b + 2e\right)^2 + \frac{3}{4}(1+b)^2}; \\
    & M_3(b,s,t) = \frac{(2+2b-s-t)^2}{\frac{1}{4}(1 - b + 2s -2t)^2 + \frac{3}{4}(1 + b)^2}; \\
    & M_4(b,s,t) = \frac{1}{4}\left(1 - b + 2s + t\right)^2 + \frac{3}{4}\left(1 + t + b \right)^2; \\
    & M_5(b,s,t) = \frac{(\max\{2,2b\} + s + t)^2}{\frac{1}{4}\left(1 - b + 2s  + t\right)^2 + \frac{3}{4}\left(1 + t + b\right)^2}; \\
    & M_6(b,s,t) = \frac{(2+2b+2t)^2}{\frac{1}{4}\left(1 - b + 2s  + t\right)^2 + \frac{3}{4}\left(1 + t + b\right)^2}; \\
    & M_7(b,s,t) = \frac{1}{4}\left(1 - b - s + t\right)^2 + \frac{3}{4}\left(1 + s + t + b \right)^2; \\
    & M_8(b,s,t) = \frac{(2+s+t)^2}{\frac{1}{4}\left(1 - b - s + t\right)^2 + \frac{3}{4}\left(1 + s + t + b \right)^2}; \\
    & M_9(b,s,t) = \frac{(2+2b+2s+2t)^2}{\frac{1}{4}\left(1 - b - s + t\right)^2 + \frac{3}{4}\left(1 + s + t + b \right)^2}.
\end{align*}

 \begin{lemm} \label{lemm:M1}
    The function $M_1(b,e)$ is maximized on the set $\{(b,e): 0 \leq b,e \leq 1\}$ by taking $b=e=1$, which gives a value of $4$.
 \end{lemm}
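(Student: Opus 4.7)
The plan is to reduce the two-variable optimization to a pair of one-variable monotonicity checks by examining the partial derivatives of $M_1$ on the closed box $[0,1]^2$.

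First I would compute $\partial M_1/\partial e$. A direct differentiation gives
\[
\frac{\partial M_1}{\partial e}(b,e) = 1 - b + 2e,
\]
which is nonnegative for every $(b,e)$ in the admissible region (since $b \leq 1$ and $e \geq 0$). Thus $M_1$ is nondecreasing in $e$ for fixed $b$, and the maximum on the box must occur on the edge $e = 1$.

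Next I would restrict to this edge and study the one-variable function
\[
g(b) := M_1(b,1) = \tfrac{1}{4}(3-b)^2 + \tfrac{3}{4}(1+b)^2.
\]
Differentiating yields $g'(b) = -\tfrac{1}{2}(3-b) + \tfrac{3}{2}(1+b) = 2b$, which is nonnegative for $b \in [0,1]$. So $g$ is nondecreasing on $[0,1]$ and attains its maximum at $b = 1$. Finally, plugging in gives $M_1(1,1) = \tfrac{1}{4}(2)^2 + \tfrac{3}{4}(2)^2 = 1 + 3 = 4$, completing the argument.

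There is no real obstacle here; the only thing to be careful about is the sign of $\partial M_1/\partial e$, which depends on the constraint $b \leq 1$ (otherwise the monotonicity could fail). Since both partial-derivative computations reduce to linear expressions whose sign is immediate on the box, the argument is entirely elementary.
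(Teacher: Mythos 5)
Your argument is correct and follows essentially the same route as the paper: the paper computes both partials $\partial M_1/\partial b = 1+2b-e$ and $\partial M_1/\partial e = 1-b+2e$, observes both are nonnegative on $[0,1]^2$, and concludes the maximum is at $(1,1)$; you do the same monotonicity check, just sequentially (first in $e$ over the whole box, then in $b$ along the edge $e=1$). The computations are all verified and the conclusion $M_1(1,1)=4$ matches.
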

 \begin{proof}
     We compute $\frac{\partial M_1}{\partial b}(b,e) = 1+2b-e$ and $\frac{\partial M_1}{\partial e}(b,e) = 1-b + 2e$. These are both non-negative on the set $\{(b,e): 0 \leq b,e \leq 1\}$. Thus $M_1$ attains a maximum value by making $b,e$ as large as possible, that is, $b=e=1$. We compute $M_1(1,1)=4$.
 \end{proof}

  \begin{lemm} \label{lemm:M1b}
    The function $\frac{1}{e^2}M_1(b,e)$ is maximized on the set $\{(b,e): 0 \leq b \leq 1, 1 \leq e <\infty\}$ by taking $b=e=1$, which gives a value of $4$.
 \end{lemm}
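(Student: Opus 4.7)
The plan is to substitute $t = 1/e$, which maps $e \in [1,\infty)$ onto $t \in (0,1]$ and turns the unbounded region into a bounded one; I will show that the resulting expression, viewed as a polynomial in $t$ for each fixed $b \in [0,1]$, is monotonically increasing on $[0,1]$. This reduces the problem to evaluating $M_1(b,1)$ over $b \in [0,1]$, which is then handled directly or by appealing to the preceding Lemma \ref{lemm:M1}.

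More precisely, one computes, using the identity $(1-b)^2 + 3(1+b)^2 = 4(1+b+b^2)$, that
\[
\frac{M_1(b,e)}{e^2} = \frac{1}{4}\bigl((1-b)t + 2\bigr)^2 + \frac{3}{4}(1+b)^2 t^2 = (1+b+b^2)\,t^2 + (1-b)\,t + 1.
\]
Since $1+b+b^2 > 0$ and $1-b \geq 0$ for $b \in [0,1]$, this quadratic in $t$ opens upward and its vertex $t^* = -(1-b)/(2(1+b+b^2))$ is non-positive. Hence the expression is non-decreasing on $[0,1]$, so its supremum over $t \in (0,1]$ is attained at $t = 1$, i.e., at $e = 1$.

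On the slice $e = 1$, the expression simplifies to $M_1(b,1) = 3 + b^2$, which on $b \in [0,1]$ is maximized at $b = 1$ with value $4$; this also follows from the boundary case of Lemma \ref{lemm:M1}. Combining the two steps, the global maximum of $M_1(b,e)/e^2$ on $\{(b,e) : 0 \leq b \leq 1,\ 1 \leq e < \infty\}$ equals $4$ and is attained at $(b,e) = (1,1)$.

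There is no real obstacle in the proof; the only substantive observation is that the reciprocal substitution $t = 1/e$ converts the unbounded tail $e \geq 1$ into a bounded interval on which $M_1(b,e)/e^2$ becomes a visibly monotone convex quadratic in $t$. All remaining steps reduce to the routine one-variable optimization already carried out in Lemma \ref{lemm:M1}.
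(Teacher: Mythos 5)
Your proof is correct and follows essentially the same strategy as the paper: establish monotonicity in $e$ on $[1,\infty)$, then reduce to the $e=1$ slice handled by Lemma \ref{lemm:M1}. The reciprocal substitution $t=1/e$ and the identity $(1-b)^2 + 3(1+b)^2 = 4(1+b+b^2)$ make the monotonicity especially transparent (the paper instead computes $\frac{\partial}{\partial e}\bigl(M_1/e^2\bigr)$ directly and checks the sign of its numerator), but the underlying argument is the same.
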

 \begin{proof}
    We first compute
    \[\frac{\partial }{\partial e}\frac{M_1(b,e)}{e^2} = -\frac{2 + 2 b + 2 b^2 + e - b e}{e^3}.\]
    Observe that since $b \leq 1$, the numerator is always positive, and we conclude that $\frac{\partial M_1}{\partial e}(b,e) < 0$. This implies that $\frac{1}{e^2}M_1(b,e)$ attains a maximum when $e=1$. The previous lemma now implies that a maximum value of $4$ is attained when $b=1$.
 \end{proof}

 \begin{lemm} \label{lemm:M2}
    The function $M_2(b,e)$ is maximized on the set $\{(b,e): b \geq 0\}$ by taking $b= e = 0$, which gives a value of $4$.     
 \end{lemm}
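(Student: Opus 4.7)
The plan is to reduce the claim $M_2(b,e) \leq 4$ to an elementary polynomial inequality. First I would observe that the denominator of $M_2$ is strictly positive for $b \geq 0$ (in fact bounded below by $3/4$), so it is safe to cross-multiply. Doing so converts the desired bound into
\[
(2+e)^2 \leq (1-b+2e)^2 + 3(1+b)^2.
\]

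Next I would expand both sides and cancel common terms. A short computation should reduce the inequality to
\[
3e^2 - 4be + 4b(b+1) \geq 0.
\]
The key observation is then that the left-hand side, viewed as a quadratic in $e$ with leading coefficient $3 > 0$, has discriminant $16b^2 - 48b(b+1) = -16b(2b+3)$, which is non-positive for every $b \geq 0$. Hence this quadratic is everywhere non-negative, establishing the claimed bound. Equality forces $b = 0$ (so that the discriminant vanishes) and then $e = 0$ (the unique root of $3e^2$), so the maximum value $M_2(0,0) = 4$ is attained uniquely at the origin.

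Because the argument is routine algebra, the main obstacle is simply arithmetic care in the expansion step; the conceptual content is minimal. I prefer this ``quadratic in $e$'' viewpoint to a direct calculus approach (solving $\nabla M_2 = 0$ and analyzing limits as $e \to \pm \infty$), which would also work but would require separately ruling out critical points other than the origin and checking behavior at infinity. Note also that, since there are no constraints on $e$ in the lemma's domain, the analysis really does need to cover all real $e$, which the discriminant argument handles uniformly.
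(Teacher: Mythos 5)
Your proof is correct, and it takes a genuinely different route from the paper. The paper's argument is a two-step calculus optimization: it computes $\partial M_2/\partial b$, observes the sign changes across the line $2b+1=e$, and thereby reduces the problem to two one-parameter slices ($b$ along $e=2b+1$, and $e$ along $b=0$), each of which is then handled by further differentiation together with a limit computation as $e\to-\infty$ to confirm that the local maximum at the origin is global. Your approach instead clears the (positive) denominator, reducing the claim $M_2(b,e)\leq 4$ to the polynomial inequality $3e^2-4be+4b(b+1)\geq 0$, and then closes it with a discriminant computation: for $b\geq 0$ the discriminant $-16b(2b+3)$ is nonpositive, so the quadratic in $e$ is everywhere nonnegative, with equality exactly at $b=e=0$. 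I checked the expansion and the discriminant; both are right. Your method is arguably cleaner and more self-contained: it avoids the need to check behavior at infinity, it dispenses with reasoning about critical points, and it delivers uniqueness of the maximizer essentially for free. The paper's calculus route has the virtue of fitting the uniform template used for the other appendix lemmas $M_3$ through $M_9$, several of which do not rationalize as nicely, but for this particular lemma the algebraic argument is the more elegant one.
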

 \begin{proof}
    We compute 
     \[\frac{\partial M_2}{\partial b}(b,e) = -\frac{(2 + e)^2 (1 + 2 b - e) }{(\frac{1}{4}\left(1 - b + 2e\right)^2 + \frac{3}{4}(1+b)^2)^2}.\]
 %    and 
  %   \[\frac{\partial M_2}{\partial e}(b,e) = \frac{(2 + e) (2 b^2 + 4b- eb - 3 e)}{(\frac{1}{4}\left(1 - b + 2e\right)^2 + \frac{3}{4}(1+b)^2)^2}.\]
    Observe that $\frac{\partial M_2}{\partial b}(b,e)>0$ if $2b<e-1$ and $\frac{\partial M_2}{\partial b}(b,e)<0$ if $2b > e-1$. This implies that $M_2(b,e)$ is greatest if $2b + 1 = e$, or if $b=0$. 

    Now 
    \[\frac{\partial}{\partial b} M_2(b,2b+1) = -\frac{2(3+2b)}{3 (1 + b)^3},\]
    which is negative. Thus $M_2(b,2b+1)$ is maximized when $b=0$, which gives a value of $M(0,1) = 4/3$. 

    Similarly, if $b=0$, then we compute
    \[\frac{\partial}{\partial e} M_2(0,e) = -\frac{3 e (2 + e)}{(1 + e + e^2)^2},\]
    which is negative if $e>0$ or $e<-2$ and positive if $-2 < e < 0$. Thus $M_2(0,e)$ has a local maximum at $e=0$, which gives the value $M_2(0,0)=4$. We also check that $\lim_{e \to - \infty} M_2(0,e) = 1$, and so $e=0$ is a global maximum. This completes the proof.     
 \end{proof}

 \begin{lemm} \label{lemm:M2b}
     The function $\widetilde{M}_2(b,e)$ is maximized on the set $\{(b,e): b \geq 0, -b \leq e \leq 0\}$ by taking $b= -e = 1/6$, which gives a value of $13/3$.
 \end{lemm}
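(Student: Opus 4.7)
The plan is to reduce the two-variable optimization to a one-variable problem on the boundary edge $\{b = -e\}$, on which the claimed maximum lies. First I would establish that $\widetilde{M}_2$ is strictly decreasing in $b$ throughout the domain. Letting $D(b,e) = \frac{1}{4}(1-b+2e)^2 + \frac{3}{4}(1+b)^2$ denote the denominator, a direct computation yields
\[\frac{\partial \widetilde{M}_2}{\partial b}(b, e) = -\frac{(2-e)^2 (1 + 2b - e)}{D(b,e)^2}.\]
Since $b \geq 0$ and $e \leq 0$ force $1 + 2b - e \geq 1 > 0$, this partial derivative is strictly negative on the whole domain. Consequently, for each fixed $e \in [-b,0]$ the maximum in $b$ is attained at the smallest admissible value $b = -e$, so the supremum of $\widetilde{M}_2$ on $\{b \geq 0,\ -b \leq e \leq 0\}$ is achieved on the diagonal $\{b = -e : b \geq 0\}$.

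Next I would substitute $e = -b$. The linear terms in the denominator cancel to give $\frac{1}{4}(1-3b)^2 + \frac{3}{4}(1+b)^2 = 1 + 3b^2$, reducing the problem to maximizing
\[g(b) = \frac{(2+b)^2}{1 + 3b^2}\]
on $b \geq 0$. Differentiating,
\[g'(b) = \frac{2(2+b)(1-6b)}{(1+3b^2)^2},\]
so the unique nonnegative critical point is $b = 1/6$, and a direct evaluation gives $g(1/6) = (13/6)^2/(13/12) = 13/3$.

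Finally I would verify this is a global maximum by comparing with the two endpoint values $g(0) = 4$ and $\lim_{b \to \infty} g(b) = 1/3$, both of which are strictly less than $13/3 \approx 4.33$. I do not expect any substantial conceptual obstacle here; the only step requiring a modicum of care is the cancellation of the linear terms in the reduction to $g$, which is what makes the one-variable expression clean enough to optimize by hand.
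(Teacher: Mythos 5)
Your proof is correct and follows essentially the same route as the paper's: show $\partial \widetilde{M}_2/\partial b < 0$ on the domain (since $1+2b-e \geq 1 > 0$), reduce to the diagonal $b = -e$, and then solve the one-variable problem by setting the derivative to zero at $b = 1/6$. The only superficial difference is that you parameterize the diagonal by $b$ and arrive at $g(b) = (2+b)^2/(1+3b^2)$, whereas the paper parameterizes by $e$; the resulting computation is identical.
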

 \begin{proof}
     We compute %in this case that
     \[\frac{\partial \widetilde{M}_2}{\partial b}(b,e) = -\frac{(1 + 2 b - e) (e-2)^2}{(\frac{1}{4}\left(1 - b + 2e\right)^2 + \frac{3}{4}(1+b)^2)^2}.\]
     This is strictly negative over the given region. It follows that $\widetilde{M}_2(b,e)$ is maximized if $b=-e$. Now
 %    \[M_2(-e,e) = \frac{(2 - e)^2}{\frac{3}{4} (1 - e)^2 + \frac{1}{4} (1 + 3 e)^2}.\]
  %   The $e$-derivative of the previous expression is
     \[\frac{\partial}{\partial e}\widetilde{M}_2(-e,e)  = \frac{2 (e-2) (1 + 6e)}{(1 + 3 e^2)^2}.\]
    This is positive if $e<-1/6$ and negative if $e > -1/6$. Thus there is a maximum at $e = -1/6$. We evaluate 
    $\widetilde{M}_2(1/6,-1/6) = 13/3$.    
%    Thus the maximum occurs when $e=0$, $e=-1/6$ or $e = -1$. We evaluate 
%    $M_2(1/6,-1/6) = 13/3$, $M_2(0,0)=4$ and $M_2(1,-1) = 9/4$. We conclude that the maximum value is $13/3$. 
 \end{proof}

 \begin{lemm} \label{lemm:M3}
     The function $M_3(b,s,t)$ is maximized on the set $\{(b,s,t): 0 \leq b,s,t \leq 1\}$ by taking $b=1$, $s=t=0$, which gives a value of $16/3$. 
 \end{lemm}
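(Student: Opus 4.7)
\textbf{Proof proposal for \Cref{lemm:M3}.} The natural approach is to change variables to $v = s+t$ and $u = s-t$. Under this substitution, the numerator $(2+2b-v)^2$ depends only on $b$ and $v$, while in the denominator the variable $u$ appears only in the first squared term, namely $\frac{1}{4}(1-b+2u)^2$. The feasible region for $(u,v)$ coming from $0 \le s, t \le 1$ is $\max(-v, v-2) \le u \le \min(v, 2-v)$, with $0 \le v \le 2$.

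The first step is to minimize the denominator over $u$ with $b$ and $v$ held fixed. The unconstrained minimizer is $u^{*} = (b-1)/2 \in [-1/2, 0]$ for $b \in [0,1]$, and it is feasible precisely when $v \ge (1-b)/2$ (the upper-side constraint is automatic since $|u^{*}| \le 1/2 \le 2-v$). When $u^{*}$ is feasible, the denominator drops to $\frac{3}{4}(1+b)^2$ and the ratio becomes $\frac{(2+2b-v)^2}{(3/4)(1+b)^2}$. For fixed $b$, this is maximized over $v$ by taking $v$ as small as possible, namely $v = (1-b)/2$, which yields $\frac{(3+5b)^2}{3(1+b)^2}$. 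A one-variable derivative computation (the derivative factors as a positive multiple of $(3+5b)$) shows this is increasing in $b$, so the case-A maximum is attained at $b=1$ with value $16/3$; this corresponds to $v=0$ and hence $s=t=0$.

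In the remaining case $v < (1-b)/2$, the best available $u$ is $-v$ (equivalently $s=0,\ t=v$). The ratio becomes $h(v) = \frac{(2+2b-v)^2}{\frac{1}{4}(1-b-2v)^2 + \frac{3}{4}(1+b)^2}$. Differentiating, and using that $2+2b-v > 0$ throughout this region, the sign of $h'(v)$ equals the sign of $g(v) = -2D(v) + (2+2b-v)(1-b-2v)$, where $D(v)$ is the denominator. A direct computation gives $g'(v) = -3-5b < 0$ and $g(0) = (2+2b)(1-b) - 2(1+b+b^2) = -2b(1+2b) \le 0$. Hence $g \le 0$ on $[0, (1-b)/2]$, so $h$ is decreasing and attains its maximum at $v=0$, i.e.\ $s=t=0$. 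At that point, using $\frac{1}{4}(1-b)^2 + \frac{3}{4}(1+b)^2 = 1+b+b^2$, the ratio simplifies to $\frac{4(1+b)^2}{1+b+b^2}$, whose derivative factors as a positive multiple of $(1-b)$ and is therefore increasing on $[0,1]$, attaining $16/3$ at $b=1$.

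The main obstacle is the monotonicity argument in the second case; the analysis simplifies dramatically because the quadratic $g(v)$ turns out to have constant slope $-3-5b$, so only the sign of $g(0)$ needs to be checked. Combining the two cases, the global maximum over $\{(b,s,t) : 0 \le b,s,t \le 1\}$ is $16/3$, attained exactly at $(b,s,t) = (1,0,0)$, which is the claim.
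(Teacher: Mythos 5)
Your proof is correct and reaches the stated conclusion, but it takes a genuinely different route from the paper. The paper argues by sign analysis of iterated partial derivatives in the original coordinates: it computes $\partial M_3/\partial s$ and shows it is negative on the interior, reducing to $s=0$; then shows $\partial M_3(b,0,t)/\partial t < 0$, reducing to $t=0$; then shows $\partial M_3(b,0,0)/\partial b > 0$, forcing $b=1$. Your argument instead exploits the separable structure revealed by the substitution $u = s-t$, $v = s+t$: the numerator depends only on $(b,v)$ and the denominator only on $(b,u)$, so one can minimize the denominator over $u$ first and then handle a one-variable optimization in $v$ for each $b$. The case split between feasible and infeasible $u^{*}$, the computation that $g'(v) = -(3+5b)$ is constant, and the final one-variable maximization in $b$ are all correct. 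Your method is a cleaner dimensional reduction; the paper's is a more brute-force but perhaps more uniform technique (similar partial-derivative arguments recur throughout the appendix).

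One small imprecision worth flagging: the feasibility characterization ``$u^{*}$ is feasible precisely when $v \ge (1-b)/2$'' and the parenthetical ``$|u^{*}| \le 1/2 \le 2-v$'' are not quite right when $v > 3/2$. The correct lower constraint is $u \ge \max(-v,\, v-2)$, which for $v > 1$ reads $u \ge v-2$; this can exclude $u^{*}$ when $v > (3+b)/2$. However, this does not affect your conclusion: in Case A you only use the inequality $D \ge \tfrac{3}{4}(1+b)^2$, which is a valid lower bound on the denominator regardless of whether $u^{*}$ is feasible, so the resulting expression $\frac{(2+2b-v)^2}{(3/4)(1+b)^2}$ is still an upper bound on the ratio and the maximization over $v$ goes through as written.
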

 \begin{proof}
         We compute
     \[\frac{\partial M_3}{\partial s}(b,s,t) = -\frac{(2 + 2 b - s - t)(s (5 - 4 t) + b (2 + 3 s - t) + 4 (t-\frac{7}{8})^2 + \frac{15}{16})}{(\frac{1}{4}(1 - b + 2 s - 2 t)^2 + \frac{3}{4} (1 + b)^2)^2},\]
     which is negative for all $0 < b,s,t < 1$.   It follows that $M_3(b,s,t)$ is maximized if $s=0$. Next, we evaluate
    \[\frac{\partial}{\partial t}M_3(b,0,t) = -\frac{(2 + 2 b - t) (4b^2 + b(5t+2) + 3t)}{2(\frac{1}{4}(1 - b - 2t)^2 + \frac{3}{4} (1 + b)^2)^2}, \]
   %\[\frac{\partial}{\partial t}M_3(b,0,t) = -\frac{(2 + 2 b - t) (2 + 3 b + 3 b^2 - t)}{2(\frac{1}{4}(1 - b - 2t)^2 + \frac{3}{4} (1 + b)^2)^2}, \]
   which is also negative for all $0 < b,t < 1$. Thus $M_3(b,0,t)$ is maximized if $t=0$. Finally, we evaluate 
   \[\frac{\partial M_3}{\partial b}(b,0,0) = \frac{4(1-b^2)}{(\frac{1}{4}(1 - b)^2 + \frac{3}{4} (1 + b)^2)^2},\]
   which is positive if $0 < b< 1$. We conclude that $M_3(b,s,t)$ is maximized when $b=1$, $s=t=0$. Evaluate $M_3(1,0,0) = 16/3$. 
 \end{proof}

 \begin{lemm} \label{lemm:M4}
     The function $M_4(b,s,t)$ is maximized on the set $\{(b,s,t): 0 \leq b,s,t \leq 1, s+t \leq 1\}$ by taking $s = 0$, $b= t = 1$, which gives a value of $7$. 
 \end{lemm}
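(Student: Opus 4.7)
The plan is to proceed by showing that $M_4$ is monotone in each of its three variables (with respect to a suitable ordering of optimization), so the maximum is attained at the asserted corner. Since $M_4$ is a sum of squares of affine functions with nonnegative coefficients in $b, s, t$ outside the $(1-b)$ term, I expect all three partial derivatives to have simple signs on the feasible region.

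First I would compute
\[
\frac{\partial M_4}{\partial b}(b,s,t) = -\tfrac{1}{2}(1-b+2s+t) + \tfrac{3}{2}(1+t+b) = 1 + 2b - s + t.
\]
Using $s \leq 1$ together with $b,t \geq 0$, this is nonnegative on the whole region, so $M_4$ is nondecreasing in $b$ and the maximum must occur at $b = 1$.

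Next I would restrict to $b=1$, obtaining
\[
M_4(1,s,t) = \tfrac{1}{4}(2s+t)^2 + \tfrac{3}{4}(2+t)^2,
\]
and compute $\partial_s M_4(1,s,t) = 2s + t \geq 0$. Hence for each fixed $t$ the maximum of $M_4(1,\cdot,t)$ is attained at the largest admissible $s$, namely $s = 1-t$ (the constraint $s+t \leq 1$ is binding). Substituting gives the single-variable function
\[
f(t) = M_4(1, 1-t, t) = \tfrac{1}{4}(2-t)^2 + \tfrac{3}{4}(2+t)^2, \qquad t \in [0,1].
\]
A direct computation yields $f'(t) = -\tfrac{1}{2}(2-t) + \tfrac{3}{2}(2+t) = 2 + 2t > 0$, so $f$ is strictly increasing on $[0,1]$ and the maximum is attained at $t=1$, which forces $s = 0$. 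Finally, evaluating $f(1) = \tfrac{1}{4} \cdot 1 + \tfrac{3}{4} \cdot 9 = 7$ confirms both the location and the value of the maximum.

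There is no serious obstacle here: all three partial derivatives have obvious signs once written down, and the argument reduces to three one-variable monotonicity checks. The only step requiring minor care is verifying that $\partial_b M_4 \geq 0$ holds uniformly on the region, which uses the hypothesis $s \leq 1$ in an essential way.
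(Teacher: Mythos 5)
Your proof is correct and takes essentially the same approach as the paper — both rely on sign checks of partial derivatives to push the optimization to a corner. The paper first increases $s$ and $t$ simultaneously to reach the face $s+t=1$ and then reduces to the single variable $u=b-s$, whereas you optimize sequentially in $b$, then $s$, then $t$; the two orderings are interchangeable here and lead to the same monotonicity computations.
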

 \begin{proof}
     Since $1-b+2s+t \geq 0$, we see that $M_4(b,s,t)$ is increasing in $s$ and $t$. Thus the maximum is attained when $s+t = 1$. Now 
     \[M_4(b,s,1-s) = \frac{3}{4} (2 + b - s)^2 + \frac{1}{4} (2 - b + s)^2.\]
     Let $u = b-s$, so the previous expression becomes $h(u) = \frac{3}{4}(2+u)^2 + \frac{1}{4}(2-u)^2$. Then $h'(u) = 2u+2$. So $M_4$ is maximized when $b-s$ is greatest, that is, $b=1$ and $s=0$. Then $t=1$. We evaluate $M_4(1,0,1) = 7$. 
 \end{proof}

 \begin{lemm} \label{lemm:M4b}
    The function $\frac{1}{(s+t)^2}M_4(b,s,t)$ is maximized on the set
     $\{(b,s,t): 0 \leq b, s,t, 1 \leq s+t, b \leq s+t\}$ by taking $s = 0$, $b= t = 1$, which gives a value of $7$.
 \end{lemm}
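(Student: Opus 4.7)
My plan is to reparametrize so that the three-variable optimization decouples into a pair of one-variable problems, avoiding any boundary case analysis. Introduce $\sigma = s + t$ and $u = b - s$. The two affine quantities appearing inside $M_4$ then become
\[
1 - b + 2s + t = (1 + \sigma) - u, \qquad 1 + t + b = (1 + \sigma) + u,
\]
so that
\[
M_4(b, s, t) = \tfrac{1}{4}\bigl((1+\sigma) - u\bigr)^2 + \tfrac{3}{4}\bigl((1+\sigma) + u\bigr)^2.
\]
The elementary identity $\tfrac{1}{4}(A - u)^2 + \tfrac{3}{4}(A + u)^2 = A^2 + Au + u^2$ (applied with $A = 1 + \sigma$) collapses this to
\[
M_4(b, s, t) = (1+\sigma)^2 + (1+\sigma)\, u + u^2.
\]
Remarkably, the variable $s$ has dropped out except through its influence on the admissible range of $u$.

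Since $b, s \in [0, \sigma]$, we have $u \in [-\sigma, \sigma]$. The expression above is a convex quadratic in $u$, so its maximum on this interval is attained at one of the endpoints; because the linear coefficient $1 + \sigma$ is positive, the larger value occurs at $u = \sigma$. The corresponding maximizer in the original coordinates is $s = 0$, $b = \sigma$, $t = \sigma$, and
\[
\max M_4 = (1+\sigma)^2 + \sigma(1+\sigma) + \sigma^2 = 3\sigma^2 + 3\sigma + 1.
\]
Dividing by $\sigma^2$ gives
\[
\frac{M_4(b,s,t)}{(s+t)^2} \leq 3 + \frac{3}{\sigma} + \frac{1}{\sigma^2},
\]
which is strictly decreasing in $\sigma$. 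On the region $\sigma \geq 1$ the supremum is therefore attained at $\sigma = 1$ and equals $3 + 3 + 1 = 7$; substituting back, the maximizer is $s = 0$, $b = t = 1$, matching the statement.

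The only non-routine step is spotting the linear substitution $u = b - s$ that simultaneously symmetrizes the two squared terms; once that is in hand the rest reduces to convexity in $u$ and monotonicity in $\sigma$. Consequently there is no real obstacle, and one avoids the polynomial boundary analysis that a direct approach would require.
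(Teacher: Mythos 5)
Your proof is correct, and it takes a genuinely cleaner route than the paper's. You extend the substitution $u = b - s$ (which the paper uses only in the companion \Cref{lemm:M4}) to this lemma, pair it with $\sigma = s + t$, and observe that $M_4$ collapses to the single-variable expression $(1+\sigma)^2 + (1+\sigma)u + u^2$. Convexity in $u$ then hands you the maximizer $u = \sigma$ (i.e.\ $b = \sigma$, $s = 0$, $t = \sigma$) with value $3\sigma^2 + 3\sigma + 1$, and dividing by $\sigma^2$ gives a manifestly decreasing function of $\sigma$, so $\sigma = 1$ is optimal. The paper instead differentiates $M_4(b,s,t)/(s+t)^2$ in $b$, splits into the boundary cases $b = s+t$ and $b = 0$, and handles each with further partial-derivative sign analysis; your version avoids that case split and the sub-case $b = 0$ (which the paper dispatches separately with a rather terse directional-derivative computation) entirely. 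The only point worth stating explicitly, which you implicitly use, is that every $u \in [-\sigma,\sigma]$ is attainable by some admissible $(b,s)$ with $b,s \in [0,\sigma]$ (e.g.\ $b = u, s = 0$ for $u \ge 0$ and $b=0, s=-u$ for $u < 0$), so the reduction to a one-variable problem in $u$ at fixed $\sigma$ is legitimate.
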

 \begin{proof}
     We evaluate
     \[\frac{\partial}{\partial b} \frac{M_4(b,s,t)}{(s+t)^2} = \frac{1 + 2 b - s + t}{(s + t)^2},\]
     which is positive when $2b > s-t-1$. If $2b > s-t-1$, then $\frac{1}{(s+t)^2}M_4(b,s,t)$ increases as $b$ increases. If $2b < s - t-1$, then $\frac{1}{(s+t)^2}M_4(b,s,t)$ increases as $b$ decreases.    
     From this we conclude that $\frac{1}{(s+t)^2}M_4(b,s,t)$ is greatest when either $b=s+t$ or $b= 0$. When analyzing the latter case, note that $t+1 < s$. 
     Then 
     \[M_4(s+t,s,t) = \frac{s^2 + 2s + 3ts + 3 t^2 + 3 t + 1}{(s + t)^2}\]
     and we compute
     \[\frac{\partial}{\partial s} \frac{M_4(s+t,s,t)}{(s+t)^2} = -\frac{2 + 2s + st + 4 t + 3 t^2}{(s + t)^3},\]
     which is negative over the same region. Thus the function is greatest when $s+t = 1$. The previous lemma now implies that a maximum value of $7$ is attained when $s = 0$ and $b= t = 1$. 

     On the other hand,
      \[M_4(0,s,t) = \frac{s^2 + 2s + 3ts + 3 t^2 + 3 t + 1}{(s + t)^2}.\]
      Let $u = (0,\frac{1}{\sqrt{2}},-\frac{1}{\sqrt{2}})$.
     We compute
     \[D_u \frac{M_4(0,s,t)}{(s+t)^2} = \frac{s-t-1}{\sqrt{2}(s + t)^2},\]
     which is positive. From this, we can assume that $t=0$. Then $M_4(0,s,0) = 1 + \frac{1}{s} + \frac{1}{s^2}$, which is at most $3$.  
 \end{proof}

  \begin{lemm} \label{lemm:M4c}
    The function $\frac{1}{b^2}M_4(b,s,t)$ is maximized on the set
     $\{(b,s,t): 0 \leq b,s,t, 1 \leq b, s+t \leq b\}$ by taking $s = 0$, $b= t = 1$, which gives a value of $7$.
 \end{lemm}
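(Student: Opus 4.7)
The plan is to reduce this lemma to \Cref{lemm:M4} via a homogenization argument, exploiting the fact that the $b^2$ in the denominator sets up a natural rescaling in $b$. Concretely, I would introduce new variables
\[
u = \frac{1}{b} \in (0,1], \qquad \tilde{s} = \frac{s}{b}, \qquad \tilde{t} = \frac{t}{b}.
\]
Under this substitution, the constraint region $\{b \geq 1,\ s,t \geq 0,\ s+t \leq b\}$ transforms to $\{u \in (0,1],\ \tilde{s},\tilde{t} \geq 0,\ \tilde{s}+\tilde{t} \leq 1\}$, and a direct algebraic substitution gives
\[
\frac{M_4(b,s,t)}{b^2} = \frac{1}{4}\bigl(u - 1 + 2\tilde{s} + \tilde{t}\bigr)^2 + \frac{3}{4}\bigl(u + \tilde{t} + 1\bigr)^2 =: g(u,\tilde{s},\tilde{t}).
\]

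Next, I would show that $g$ is strictly increasing in $u$ on the admissible region, which forces the maximum to be attained at $u=1$, i.e.\ $b=1$. A short computation gives
\[
\frac{\partial g}{\partial u}(u,\tilde{s},\tilde{t}) = \tfrac{1}{2}(u-1+2\tilde{s}+\tilde{t}) + \tfrac{3}{2}(u+\tilde{t}+1) = 2u + 1 + \tilde{s} + 2\tilde{t},
\]
which is positive throughout the region. Therefore $\sup g = \sup_{\tilde{s},\tilde{t}} g(1,\tilde{s},\tilde{t})$.

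Finally, observe that $g(1,\tilde{s},\tilde{t}) = M_4(1,\tilde{s},\tilde{t})$, and that the remaining optimization over $\{\tilde{s},\tilde{t} \geq 0,\ \tilde{s}+\tilde{t} \leq 1\}$ is precisely the $b=1$ slice of the region in \Cref{lemm:M4} (note $\tilde{s},\tilde{t} \leq 1$ automatically, so the hypotheses of that lemma are met). Invoking \Cref{lemm:M4} yields the maximum value $7$, attained at $\tilde{s}=0$, $\tilde{t}=1$, which translates back under the substitution to $s=0$ and $b=t=1$.

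I do not anticipate any serious obstacle: the scaling is dictated by the homogeneity of the denominator, and once that observation is made, the argument is just a partial-derivative sign check followed by a citation to the previous lemma. The only point requiring care is verifying that the image of the rescaled region sits inside the hypotheses of \Cref{lemm:M4}, which is immediate from $\tilde{s}+\tilde{t} \leq 1$ and $\tilde{s},\tilde{t} \geq 0$.
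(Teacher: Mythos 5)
Your proof is correct, and it takes a genuinely different route from the paper. You exploit the degree-two homogeneity of the numerator: substituting $u = 1/b$, $\tilde{s} = s/b$, $\tilde{t} = t/b$ turns the unbounded region $\{b \geq 1,\ s,t \geq 0,\ s+t \leq b\}$ into the bounded slab $\{u \in (0,1],\ \tilde{s},\tilde{t} \geq 0,\ \tilde{s}+\tilde{t} \leq 1\}$, and a clean monotonicity check ($\partial g / \partial u = 2u + 1 + \tilde{s} + 2\tilde{t} > 0$) pins the maximum to $u = 1$, i.e.\ $b = 1$, after which \Cref{lemm:M4} finishes the job. The paper instead takes a directional derivative in the direction $(0, 1/\sqrt{2}, 1/\sqrt{2})$ (increasing $s$ and $t$ together while leaving $b$ fixed), finds it equals $3(1+s+t)/(\sqrt{2}b^2) > 0$, concludes the maximum lies on the face $s + t = b$, and then cites \Cref{lemm:M4b}. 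Both reductions are correct, but they funnel into different previously-proved lemmas: you land on \Cref{lemm:M4}, which has a short and self-contained proof, while the paper lands on \Cref{lemm:M4b}, whose proof is itself more elaborate. Your route is arguably cleaner in that sense, and it makes the role of the $b^2$ normalization in the denominator transparent; the paper's route has the minor virtue of reusing exactly the same kind of directional-derivative computation it employs in the neighboring lemmas, keeping the appendix stylistically uniform.
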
 
 \begin{proof}
     Let $u = (0,\frac{1}{\sqrt{2}}, \frac{1}{\sqrt{2}})$. The directional derivative of $\frac{1}{b^2}M_4(b,s,t)$ in the direction of $u$ is $3 (1 + s + t)/(\sqrt{2}b^2)$, which is positive. We conclude that $\frac{1}{b^2}M_4(b,s,t)$ is greatest when $s+t = b$. This reduces to \Cref{lemm:M4b}, and so we obtain the same conclusion. 
 \end{proof}

 \begin{lemm} \label{lemm:M5}
     The function $M_5(b,s,t)$ is maximized on the set $\{(b,s,t): 0 \leq b,s,t,\, s \leq 1\}$ by taking $b=6$, $s=1$, $t=0$, which gives a value of $13/3$. %$b=s=t=0$, which gives a value of $4$.
 \end{lemm}
 \begin{proof}
    Let $M_5^a(b,s,t)$ denote the function defined like $M_5$ but replacing ``$\min\{2,2b\}$'' with ``$2$'', and similarly for $M_5^b$. It suffices to consider the functions $M_5^a$ and $M_5^b$ separately.

     Let $u = (0, \frac{1}{\sqrt{2}}, -\frac{1}{\sqrt{2}})$. Then the directional derivative of $M_5^a$ in the $u$-direction is
     \[D_u M_5^a(b,s,t) = \frac{(1 + 2 b - s + t)(2+s+t)^2}{\sqrt{2}(\frac{1}{4} (1 - b + 2 s + t)^2 + \frac{3}{4} (1 + b + t)^2)^2}.\]
     If $s <1+2b+t$, then $D_u M_5^a$ is positive; if $s > 1+2b+t$, then $D_u M_5^a$ is negative. Thus along a given curve in the $u$-direction, $M_5^a(b,s,t)$ is greatest when $s = 1+2b+t$. Note that this point lies outside the interior of the target region $\{(b,s,t): 0 \leq b,s,t,\, s \leq 1\}$. From this we deduce that the maximum necessarily occurs when $t=0$ or when $s=1$.  

     In the case that $t=0$, we evaluate
     \[M_5^a(b,s,0)= \frac{(2+s)^2}{\frac{1}{4} (1 - b + 2 s)^2 + \frac{3}{4} (1 + b)^2}.\]
     Compute
     \[\frac{\partial}{\partial b} \frac{(2+s)^2}{\frac{1}{4} (1 - b + 2 s)^2 + \frac{3}{4} (1 + b)^2} = -\frac{(1 + 2 b - s)(2 + s)^2}{(\frac{1}{4} (1 - b + 2 s)^2 + \frac{3}{4} (1 + b)^2)^2}.\]
     Since $s \leq 1$, this is negative, so we conclude that the maximum occurs when $b=0$. Now
     \[M_5^a(0,s,0) = \frac{(2+s)^2}{\frac{1}{4} (1 + 2 s)^2+ \frac{3}{4} }.\]
     This function is decreasing in $s$ for $s>0$, so we conclude that $s=0$. We compute $M_5^a(0,0,0)=4$, so $M_5^a$ attains a maximum value for $b=s=t=0$.

     In the case that $s=1$, we evaluate
     \[M_5^a(b,1,t) = \frac{(3 + t)^2}{\frac{1}{4} (3 - b + t)^2 + \frac{3}{4} (1 + b + t)^2}.\]
     A computation shows that the $b$-derivative of the right-hand side is negative, and thus any maximum occurs when $b=0$. We then deduce that $t=0$. Evaluate $M_5^a(0,1,0) = 3$.  

     The function $M_5^b$ is handled similarly and we highlight the differences. We compute
    \[D_u M_5^b(b,s,t) = \frac{(1 + 2 b - s + t)(2b+s+t)^2}{\sqrt{2}(\frac{1}{4} (1 - b + 2 s + t)^2 + \frac{3}{4} (1 + b + t)^2)^2}.\]
    From this, we conclude that the maximum occurs when $s = 1 + 2b + t$, when $t=0$ or when $s=1$. As before, the line $s=1+2b+t$ lies outside the target domain, so we assume that $t=0$ or $s=1$. 

    If $t=0$, we compute
    \[\frac{\partial}{\partial s} M_5^b(b,s,0) =  \frac{(2 b + s) (2 + 4 b^2 + s - 5 b s)}{(\frac{1}{4} (1 - b + 2 s + t)^2 + \frac{3}{4} (1 + b + t)^2)^2}.\]
    This is positive for $b \geq 0$, $0 < s< 1$, so we conclude that $s=1$. 

    Thus we can consider the second case that $s=1$. We compute
    \[\frac{\partial}{\partial b} M_5^b(b,1,t) = - \frac{(1 + 2 b + t) (-12 + 2 b - 11 t - 3 t^2)}{(\frac{1}{4} (1 - b + 2 s + t)^2 + \frac{3}{4} (1 + b + t)^2)^2}\]
    and
    \[\frac{\partial}{\partial t} M_5^b(b,1,t) = - \frac{(1 + 2 b + t) (-3 +  7b - t + 3 bt)}{(\frac{1}{4} (1 - b + 2 s + t)^2 + \frac{3}{4} (1 + b + t)^2)^2}.\]
    The first of these is zero if $b = -(11t+3t^2)/12$, while the second of these is zero if $b = (3+t)/(7+3t)$. There is no $t>0$ satisfying both these relations, and so we conclude that $M_5^b(b,1,t)$ does not have any critical points when $0 < b,t$. Thus any maximum must occur when $t=0$ or $b=0$. 

    Next,
    \[\frac{\partial}{\partial b} M_5^b(b,1,0) =  -\frac{2 (-6 + b) (1 + 2 b)}{(3+b^2)^2},\]
    so $M_5^b(b,1,0)$ has a maximum at $b=6$. Compute $M_5^b(6,1,0) = 13/3$. Similarly, 
    \[\frac{\partial}{\partial b} M_5^b(0,1,t) =  -\frac{(1 + t) (3 + t)}{(3 + 3 t + t^2)^2},\]
    which is positive for $t>0$. So
    $\lim M_5^b(0,1,t) = 1$ is a bound on $M_5^b(0,1,t)$ for $t>0$. 
    
    Finally, $\frac{\partial}{\partial t} M_5^b(b,1,t)$ is negative whenever $b \geq 1$, which implies $M_5^b(b,1,t) \leq M_5^b(b,1,0)$ for all $b,t$ satisfies $b \geq 1$. Also, $\frac{\partial}{\partial b} M_5^b(b,1,t)$ is positive whenever $b \leq 1$. These facts allow us to conclude that $M_5^b(b,1,t)$ does in fact attain a maximum. 
 \end{proof}

 \begin{lemm} \label{lemm:M6}
     The function $M_6(b,s,t)$ is maximized on the set $\{(b,s,t): 0 \leq b,s,t\}$ by taking $s+t = 1+b$, which gives a value of $16/3$. 
 \end{lemm}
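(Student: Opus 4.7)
The plan is to reduce this three-variable optimization to a one-variable problem by first eliminating $s$. The key observation is that the numerator of $M_6$ is independent of $s$, while the denominator is a convex quadratic in $s$ whose unconstrained minimum occurs at $s^{*} = (b - 1 - t)/2$. Hence for fixed non-negative $b$ and $t$, the ratio $M_6$ is maximized over $s \geq 0$ by taking $s = \max\{0, s^{*}\}$, which splits the analysis into two cases according to the sign of $s^{*}$.

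In the first case $b \geq 1 + t$, the optimal value $s^{*}$ is non-negative and zeroes out the first term of the denominator, so a direct substitution yields
\[
M_6(b, s^{*}, t) = \frac{(2 + 2b + 2t)^2}{\tfrac{3}{4}(1 + b + t)^2} = \frac{16}{3}.
\]
In particular, this value is attained along the entire surface $b = 1 + 2s + t$ inside the feasible region.

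In the second case $b \leq 1 + t$, the optimal choice is $s = 0$. Setting $u = 1 + t > 0$ and $r = b/u \in [0,1]$, a short expansion reduces the denominator to $u^2 + ub + b^2$, and the ratio becomes
\[
g(r) = \frac{4(1+r)^2}{1 + r + r^2}.
\]
Differentiating gives $g'(r) = 4(1-r^2)/(1+r+r^2)^2 \geq 0$ on $[0,1]$, so $g$ is increasing there and attains its maximum $g(1) = 16/3$ at $r = 1$, which corresponds to the boundary $b = 1+t$ where this case meets the first. The main obstacle, such as it is, lies in recognizing that eliminating $s$ first (rather than $b$ or $t$) is what makes the problem tractable; once this reduction is in hand, the remaining analysis is an elementary single-variable optimization, and the two cases glue consistently along $b = 1+t$ since both give $16/3$ there.
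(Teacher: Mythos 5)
Your proof is correct and follows essentially the same route as the paper: you eliminate $s$ first by observing that the numerator is independent of $s$ and the denominator is a convex quadratic in $s$ with vertex $s^{*} = (b-1-t)/2$, and then split into the two cases $s^{*} \geq 0$ and $s^{*} < 0$. In the second case the paper computes $\partial M_6/\partial b$ at $s=0$ directly and observes it is positive when $b < 1+t$, whereas you substitute $u = 1+t$, $r = b/u$ to reduce to the one-variable function $g(r) = 4(1+r)^2/(1+r+r^2)$; this is a mild cosmetic variation, not a genuinely different argument. One thing worth flagging: both your derivation and the paper's proof identify the maximizing locus as $b = 1 + 2s + t$ (equivalently $2s + t = b - 1$), on which the first term of the denominator vanishes and $M_6 = 16/3$; the lemma's statement of the locus as ``$s + t = 1 + b$'' is a typo, since substituting, say, $b=1$, $t=0$, $s=2$ into $M_6$ gives $16/7 \neq 16/3$.
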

 \begin{proof}
     Compute
     \[\frac{\partial M_6}{\partial s} (b,s,t) = \frac{4 (-1 + b - 2 s - t) (1 + b + t)^2}{(\frac{1}{4} (1 - b + 2 s + t)^2 + \frac{3}{4} (1 + t + b)^2)^2}.\]
     This is positive if $2s <-1+b-t$ and negative if $2s> -1+b-t$. Thus any maximum of $M_6$ occurs when $2s = -1+b-t$, if $-1+b-t \geq 0$, or $-1+b-t<0$ and $s=0$. In the first case, we evaluate $M_6(b,(-1+b-t)/2,t) = 16/3$. 

    In the second case, evaluate
    \[\frac{\partial M_6}{\partial b} (b,0,t) = -\frac{4 (-1 + b - t) (1 + t) (1 + b + t)}{(b^2 + b (1 + t) + (1 + t)^2)^2}. \]
    If $-1+b-t<0$, then this derivative is positive. We conclude that any maximum for $M_6(b,0,t)$ occurs when $-1+b-t=0$, which returns us to the previous case. 
 \end{proof}

 \begin{lemm} \label{lemm:M7}
     The function $M_7(b,s,t)$ is maximized on the set $\{(b,s,t): 0 \leq b,s,t \leq 1, s+t \leq 1\}$, $M_7(b,s,t)$ by taking $s = 0$, $b=t = 1$, or $t=0$, $b = s = 1$. This gives a value of $7$. 
 \end{lemm}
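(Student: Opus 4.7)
The plan is to observe that $M_7$ is a convex combination of two squared affine functions in $(b,s,t)$ and to show directly that $M_7$ is strictly increasing in each of the three variables on the feasible region. This reduces the three-dimensional optimization to a one-dimensional problem on the ``upper'' boundary.

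First, I would compute the partial derivatives:
\[\partial_b M_7 = \partial_s M_7 = 1 + 2b + 2s + t, \qquad \partial_t M_7 = 2 + b + s + 2t.\]
Both expressions are strictly positive on $\{(b,s,t) : b,s,t \geq 0\}$, so $M_7$ is strictly increasing in each variable on the feasible region. Since the feasible region is the compact set $\{0 \leq b,s,t \leq 1,\ s+t \leq 1\}$, the maximum must occur at a point where no coordinate can be increased without leaving the region. The only binding constraints are $b \leq 1$ and $s+t \leq 1$, so the maximum lies on the edge
\[E = \{(b,s,t) : b=1,\ s+t=1,\ 0 \leq s \leq 1\}.\]

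Next, I would restrict $M_7$ to $E$ by setting $b=1$, $t=1-s$:
\[M_7(1,s,1-s) = \tfrac{1}{4}(1-2s)^2 + \tfrac{3}{4}\cdot 9 = \tfrac{(1-2s)^2}{4} + \tfrac{27}{4}.\]
This is a convex parabola in $s$ on $[0,1]$, so it attains its maximum at one of the endpoints $s=0$ or $s=1$. Either endpoint yields the value $\tfrac{1}{4} + \tfrac{27}{4} = 7$. Combining with the previous step, the maximum of $M_7$ on the given region is exactly $7$, attained at the two stated points $(b,s,t) = (1,0,1)$ and $(b,s,t) = (1,1,0)$.

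There is no serious obstacle here: the main observation is that the symmetric way $b$ and $s$ enter $M_7$ makes the monotonicity argument essentially one-sided, and the remaining one-variable reduction is immediate. This same monotonicity reasoning is precisely the reason why \Cref{lemm:M7b} (with the $(s+t)^{-2}$ rescaling) will require more care, but for $M_7$ itself the argument is self-contained.
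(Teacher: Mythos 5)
Your proof is correct and follows essentially the same route as the paper's: compute the partial derivatives of $M_7$, observe they are positive on the region, conclude the maximum lies on the edge $b=1$, $s+t=1$, and then maximize the resulting one-variable parabola at its endpoints. The only difference is that you additionally record $\partial_t M_7 = 2 + b + s + 2t > 0$, whereas the paper only records $\partial_b M_7 = \partial_s M_7 = 1 + 2b + 2s + t$ and lets the reduction to $s+t=1$ follow from $\partial_s > 0$ together with the constraint $s+t \leq 1$; both reasonings are sound and lead to the same conclusion.
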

 \begin{proof}
 We evaluate 
 \[\frac{\partial}{\partial b} M_7(b,s,t) = \frac{\partial}{\partial s} M_7(b,s,t) =  1 + 2 b + 2 s + t,\]
 which is positive. Thus $M_7$ is greatest when $b=1$ and $s+t = 1$. Now
 \[M_7(1,s,1-s) = \frac{27}{4} + \frac{1}{4} (1 - 2 s)^2,\]
 which attains a maximum of $7$ when $s=0$ or $s=1$. 
 \end{proof}

 \begin{lemm} \label{lemm:M7b}
    The function $\frac{1}{(s+t)^2}M_7(b,s,t)$ is maximized on the set $\{(b,s,t): 0 \leq b,s,t \leq 1, s+t \geq 1\}$, $M_7(b,s,t)$ by taking $s = 0$, $b=t = 1$, or $t = 0$, $b = s = 1$. This gives a value of $7$.
 \end{lemm}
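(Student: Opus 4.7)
The plan is to reduce this calculus optimization to a one-variable problem by exploiting monotonicity in $b$ and symmetry in $(s,t)$. First I would compute $\partial M_7/\partial b = 1+2b+2s+t$, which is already noted in the proof of \Cref{lemm:M7} to be positive on the relevant region. Since the denominator $(s+t)^2$ is independent of $b$, it follows that $M_7(b,s,t)/(s+t)^2$ is strictly increasing in $b$, so the maximum must occur at $b=1$.

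Next, I would observe that $M_7(1,s,t)/(s+t)^2$ is symmetric under the swap $s \leftrightarrow t$, since $(1-1-s+t)^2 = (t-s)^2$ is symmetric and the remaining terms are manifestly symmetric. This motivates the change of variables $u = s+t$ and $v = s-t$, under which
\[
\frac{M_7(1,s,t)}{(s+t)^2} = \frac{\tfrac14 v^2 + \tfrac34(2+u)^2}{u^2}.
\]
For fixed $u \in [1,2]$, this expression is strictly increasing in $v^2$, so it is maximized by making $|v|$ as large as the constraints $0 \leq s,t \leq 1$ allow. The admissible range is $|v| \leq \min(u, 2-u) = 2-u$ (since $u \geq 1$), attained when one of $s,t$ equals $1$ and the other equals $u-1$.

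Plugging $|v| = 2-u$ into the display yields, after expansion, $g(u) = (4 + 2u + u^2)/u^2 = 1 + 2/u + 4/u^2$. This is strictly decreasing in $u$ for $u>0$, so on $u \in [1,2]$ its maximum is attained at $u=1$, where $g(1) = 7$. Tracing back, $u=1$ and $|v|=1$ correspond precisely to $(b,s,t) = (1,1,0)$ or $(1,0,1)$, which are the claimed maximizers with value $7$.

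The only mild obstacle is bookkeeping the constraint $|v| \leq 2-u$ correctly on the boundary of the region (particularly at $u=1$ where this meets the region $s+t \leq 1$ from \Cref{lemm:M7}); but the two lemmas agree at $u=1$, confirming a consistent maximum of $7$ along the shared boundary. No critical point in the interior of the region needs to be checked separately, since the monotonicity arguments in $b$, $v^2$, and $u$ suffice to drive the optimum to the stated corners.
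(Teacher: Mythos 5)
Your proof is correct and reaches the same conclusion as the paper, but by a somewhat different route. The paper first reduces to $b=1$ exactly as you do, and then directly computes $\partial_s\bigl(M_7(1,s,t)/(s+t)^2\bigr) = (-6 + ts - 3s - 3t - t^2)/(s+t)^3$, observes the sign is negative, and hence pushes to the boundary $s+t = 1$, where it invokes Lemma A.8 (\Cref{lemm:M7}). You instead exploit the $s \leftrightarrow t$ symmetry that emerges once $b=1$ is fixed, passing to coordinates $u = s+t$, $v = s-t$ in which the objective becomes $\bigl(\tfrac14 v^2 + \tfrac34(2+u)^2\bigr)/u^2$; maximizing over $v$ at fixed $u$ is then trivial, and the resulting one-variable function $g(u) = 1 + 2/u + 4/u^2$ is visibly decreasing. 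Your route avoids having to sign-check a two-variable rational partial derivative and makes the identification of the extremal pair of corner points transparent from $|v| = 2-u$ at $u=1$. One small bookkeeping point you handle correctly but could state more explicitly: the constraint bound $|v| \le \min(u, 2-u) = 2-u$ uses $u \ge 1$; for the other half of the parameter box ($u \le 1$) one would get $\min(u, 2-u) = u$, which is precisely why the companion Lemma A.8 handles that region separately. Both arguments are sound; yours is marginally cleaner.
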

 \begin{proof}
      We evaluate
     \[\frac{\partial}{\partial b} \frac{M_7(b,s,t)}{(s+t)^2} = \frac{1 + 2 b + 2 s + t}{(s + t)^2},\]
     which is positive for $0 < b,s,t<1$. This implies the function is greatest when $b=1$. We now compute
     \[\frac{\partial}{\partial s} \frac{M_7(1,s,t)}{(s+t)^2} = \frac{-6 + ts-3s - 3 t - t^2}{(s + t)^3},\]
     which is negative over the same region. Thus the function is greatest when $s+t = 1$. The previous lemma now implies that a maximum value of $7$ is attained when $s = 0$ and $b= t = 1$, or $t=0$ and $b = s = 1$.
 \end{proof}

\begin{lemm} \label{lemm:M8}
    The function $M_8(b,s,t)$ is maximized on the set $\{(b,s,t): 0 \leq b,s,t\}$ by taking $b=s=t=0$. This gives a value of $4$. 
\end{lemm}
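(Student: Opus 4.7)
The plan is to bypass a multivariable critical-point analysis entirely and reduce the claim to a single polynomial identity. Since $M_8(0,0,0)=4$ by direct substitution, it suffices to prove that
\[(2+s+t)^2 \;\leq\; \tfrac{1}{4}(1-b-s+t)^2 \cdot 4 + \tfrac{3}{4}(1+b+s+t)^2 \cdot 4 \;=\; (1-b-s+t)^2 + 3(1+b+s+t)^2\]
for all $b,s,t \geq 0$, with equality precisely at $b=s=t=0$. This simultaneously gives both the upper bound $M_8 \leq 4$ and identifies the unique maximizer in the interior of the parameter region.

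The key step is then a straightforward expansion. Writing out
\[(1-b-s+t)^2 = 1+b^2+s^2+t^2 - 2b - 2s + 2t + 2bs - 2bt - 2st\]
and
\[(1+b+s+t)^2 = 1+b^2+s^2+t^2 + 2b + 2s + 2t + 2bs + 2bt + 2st,\]
summing after multiplying the second by $3$, and subtracting $(2+s+t)^2 = 4+4s+4t+s^2+t^2+2st$, the difference collapses to
\[4b^2 + 3s^2 + 3t^2 + 4b + 4t + 8bs + 4bt + 2st.\]
Every coefficient is non-negative, so the expression is non-negative on $\{(b,s,t): b,s,t \geq 0\}$, and it vanishes if and only if each pure quadratic and each linear term vanishes, which forces $b=s=t=0$.

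The only obstacle is bookkeeping: the cross terms in $(1-b-s+t)^2$ carry mixed signs because $b$ and $s$ enter with the opposite sign from $t$, and these have to be combined correctly with the all-positive expansion of $3(1+b+s+t)^2$ before the $(2+s+t)^2$ subtraction. No calculus, and no separate case analysis on which of $b,s,t$ is the ``active'' variable, is required, which keeps this lemma noticeably shorter than several of its neighbors in the appendix.
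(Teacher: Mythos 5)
Your proof is correct, and it takes a genuinely different and more elementary route than the paper's. The paper treats all the appendix inequalities uniformly by differentiating: it computes the directional derivative of $M_8$ along $u = (0,\tfrac{1}{\sqrt2},-\tfrac{1}{\sqrt2})$, locates the line $s = 1 - b + t$ on which it vanishes, and then runs a case analysis (whether that line meets the positive orthant, then further reductions in $b$, then in $s$ and $t$) before arriving at $M_8(0,0,0) = 4$. You instead clear the denominator and verify the single polynomial inequality
\[ (1-b-s+t)^2 + 3(1+b+s+t)^2 - (2+s+t)^2 \;=\; 4b^2 + 3s^2 + 3t^2 + 4b + 4t + 8bs + 4bt + 2st \;\geq\; 0, \]
which is manifestly true term-by-term on $\{b,s,t\geq 0\}$, with equality forcing $b=s=t=0$ via the pure terms $4b$, $3s^2$, $4t$. (I rechecked the expansion; the coefficients are right.) Your argument is shorter, avoids calculus and casework entirely, and pins down the unique maximizer for free. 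The trade-off is that this ``clear denominators and inspect signs'' trick works cleanly here only because the conjectured maximum sits at a corner of the region and the resulting difference polynomial happens to have all nonnegative coefficients; the paper's derivative-based template is less slick but applies uniformly to the neighboring lemmas (e.g.\ \Cref{lemm:M6}, \Cref{lemm:M9}) where the maximum is attained along an interior line and the analogous difference polynomial would not be sign-definite.
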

\begin{proof}
   %We compute
    % \[\frac{\partial}{\partial s} M_8(b,s,t) = \frac{(-2 b^2 - 2 b (-1 + s) - t (1 + t) + s (3 + t)) (2 + s + t)}{(\frac{1}{4} (1 - b - s + t)^2 + \frac{3}{4} (1 + b + s + t)^2)^2}\]
    % and
    % \[\frac{\partial}{\partial t} M_8(b,s,t) = -\frac{(2 b^2 + s^2 - 2 (1 + t) - s (2 + t) + b (3 s + t)) (2 + s + t)}{(\frac{1}{4} (1 - b - s + t)^2 + \frac{3}{4} (1 + b + s + t)^2)^2}.\]
     Let $u = (0, \frac{1}{\sqrt{2}}, -\frac{1}{\sqrt{2}})$. Then the directional derivative of $M_8$ in the $u$-direction is 
     \[D_u M_8(b,s,t) = \frac{(b-1+s-t)(2+s+t)^2}{\sqrt{2}(\frac{1}{4} (1 - b - s + t)^2 + \frac{3}{4} (1 + b + s + t)^2)^2}.\]
     If $s-t < 1-b$, then $D_u M_5$ is positive; if $s-t > 1-b$, then $D_u M_5$ is negative. Thus along a given curve in the $u$-direction, $M_8(b,s,t)$ is maximized when $s = 1 - b + t$. Note that this point may lie outside the target region $\{(b,s,t): 0 \leq b,s,t\}$, in which case the maximum necessarily occurs when $t=0$. We examine each case separately. First, we take $s = 1 - b + t$. Then
     \[M_8(b, 1-b+t,t) = \frac{(-3 + b - 2 t)^2}{3 (1 + t)^2}. \]
     This function is decreasing in $b$ if $b< 3+2t$ and increasing in $b$ if $b > 3+2t$. Moreover, for fixed $t$, since $s\geq 0$, it follows that $b$ satisfies the relation $b \leq 1+t$. Thus the maximum must occur when $b=0$ or $b= 1+t$. This gives
     \[M_8(0,1+t,t) = \frac{4 (3 + 2 t)^2}{3 (2 + 2 t)^2}\]
     and 
     \[M_8(1+t,0,t) = \frac{4 (2 + t)^2}{3 (2 + 2 t)^2} .\]
     Each of these functions is decreasing. So each has a maximum when $t=0$. We evaluate $M_8(0,1,0) = 3$ and $M_8(1,0,0)= 4/3$. 

     Next, we take $t=0$. Then
     \[M_8(b,s,0)= \frac{(2+s)^2}{\frac{1}{4} (1 - b - s)^2 + \frac{3}{4} (1 + b + s)^2}.\]
     Compute
     \[\frac{\partial}{\partial b} \frac{(2+s)^2}{\frac{1}{4} (1 - b - s)^2 + \frac{3}{4} (1 + b + s)^2} = -\frac{((2 + s)^2 (1 + 2 b + 2 s)}{1 + b + b^2 + s + 2 b s + s^2)^2}.\]
     This is negative, so we conclude that the maximum occurs when $b=0$. Now
     \[M_8(0,s,0) = \frac{(2+s)^2}{\frac{1}{4} (1 - s)^2 + \frac{3}{4} (1 + s)^2}.\]
     This function is decreasing in $s$ for $s>0$, so we conclude that $s=0$. We compute $M_8(0,0,0)=4$, so $M_8$ attains a maximum value for $b=s=t=0$. 
\end{proof}

\begin{lemm} \label{lemm:M9}
    The function $M_9(b,s,t)$ is maximized on the set $\{(b,s,t): 0 \leq b,s,t\}$ by taking $t = b+s-1$, which gives a value of $16/3$. 
\end{lemm}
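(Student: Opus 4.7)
The plan is to reduce the three-variable optimization to a one-variable problem by exploiting a convenient algebraic identity in the numerator and denominator. Set $Q = 1+b+s+t$ and $P = 1-b-s+t$. Since $2+2b+2s+2t = 2Q$, the numerator of $M_9$ equals $4Q^2$, and the denominator equals $\tfrac{1}{4}P^2 + \tfrac{3}{4}Q^2$. Thus
\[
M_9(b,s,t) = \frac{4Q^2}{\tfrac{1}{4}P^2 + \tfrac{3}{4}Q^2}.
\]
Because $Q = 1+b+s+t \geq 1 > 0$ on the target region, we may divide the numerator and denominator by $Q^2$ to obtain
\[
M_9(b,s,t) = \frac{16}{(P/Q)^2 + 3}.
\]

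With the expression in this form, maximizing $M_9$ is equivalent to minimizing $(P/Q)^2$, whose minimum possible value is $0$, achieved precisely when $P = 0$. The equation $P = 0$ rearranges to $t = b + s - 1$, and this is compatible with the constraints $b,s,t \geq 0$ as soon as $b+s \geq 1$ (e.g.\ $b = 1$, $s = 0$, $t = 0$). At any such point, $M_9 = 16/Q^2 \cdot Q^2 / \tfrac{3}{4} = 16/3$, and elsewhere we have $(P/Q)^2 > 0$ and hence $M_9 < 16/3$. This establishes both that the supremum is $16/3$ and that it is attained on the set $\{(b,s,t) : b,s \geq 0,\ b+s \geq 1,\ t = b+s-1\}$.

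There is essentially no obstacle once the algebraic identity $2+2b+2s+2t = 2(1+b+s+t)$ is noticed; the only subtle point is verifying that the locus $t = b+s-1$ actually meets the admissible region, which it does whenever $b+s \geq 1$. Unlike the earlier lemmas in the appendix, no calculus or case analysis is needed here, because the whole expression depends on $(b,s,t)$ only through the single ratio $P/Q$.
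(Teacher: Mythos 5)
Your proof is correct, and it takes a genuinely different and cleaner route than the paper. The paper computes the partial derivative
\[
\frac{\partial}{\partial t} M_9(b,s,t) = \frac{4 (b + s) (-1 + b + s - t) (1 + b + s + t)}{\bigl(\tfrac{1}{4} (1 - b - s + t)^2 + \tfrac{3}{4} (1 + b + s + t)^2\bigr)^2},
\]
reads off its sign to conclude that $M_9$ is largest when $t = b+s-1$, and then evaluates. Your algebraic observation that the numerator is exactly $4Q^2$ with $Q = 1+b+s+t$, so that $M_9 = 16/\bigl((P/Q)^2 + 3\bigr)$ with $P = 1-b-s+t$, turns the claim into the tautology $(P/Q)^2 \geq 0$ and delivers the global upper bound $M_9 \leq 16/3$ on the whole region with no calculus at all. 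It also makes transparent exactly where equality holds (namely $P=0$, i.e.\ $t = b+s-1$, which requires $b+s\geq 1$ to lie in the admissible region), a point the paper's statement glosses over slightly since $t=b+s-1$ can be negative. One small blemish: the inline expression ``$M_9 = 16/Q^2 \cdot Q^2 / \tfrac{3}{4}$'' in your write-up does not literally evaluate to $16/3$; the intended computation is simply $16/(0+3) = 16/3$ upon substituting $P=0$ into the simplified formula, and the conclusion is correct.
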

\begin{proof}

    We compute
    \[\frac{\partial}{\partial t} M_9(b,s,t) = \frac{4 (b + s) (-1 + b + s - t) (1 + b + s + t)}{(\frac{1}{4} (1 - b - s + t)^2 + \frac{3}{4} (1 + b + s + t)^2)^2}.\]
    This is positive if $t + 1 < b+s$ and negative if $t+1 > b+s$, so we conclude that $M_9(b,s,t)$ is largest when $t = b+s - 1$. Now $M_9(b,s,b+s-1)$ evaluates to $16/3$.
\end{proof}

    \noindent \textbf{Acknowledgments:} We thank Guy C. David and Dustin Mixon for their feedback and corrections on a draft of this paper. We also thank the anonymous referees for their comments and corrections.

    \text{ }

    \noindent \textbf{Funding information:}    M.R. was partially supported by the National Science Foundation under grant DMS-2413156.

    \bibliographystyle{abbrv}
\bibliography{bibliography}

\begin{thebibliography}{10}

\bibitem{BH:99}
M.~R. Bridson and A.~Haefliger.
\newblock {\em Metric spaces of non-positive curvature}, volume 319 of {\em Grundlehren der mathematischen Wissenschaften [Fundamental Principles of Mathematical Sciences]}.
\newblock Springer-Verlag, Berlin, 1999.

\bibitem{BBI:01}
D.~Burago, Y.~Burago, and S.~Ivanov.
\newblock {\em A course in metric geometry}, volume~33 of {\em Graduate Studies in Mathematics}.
\newblock American Mathematical Society, Providence, RI, 2001.

\bibitem{CIM:24}
J.~Cahill, J.~W. Iverson, and D.~G. Mixon.
\newblock Towards a bilipschitz invariant theory.
\newblock {\em Appl. Comput. Harmon. Anal.}, 72:Paper No. 101669, 27, 2024.

\bibitem{DEV:23}
G.~C. David, S.~Eriksson-Bique, and V.~Vellis.
\newblock Bi-{L}ipschitz embeddings of quasiconformal trees.
\newblock {\em Proc. Amer. Math. Soc.}, 151(5):2031--2044, 2023.

\bibitem{DV:22}
G.~C. David and V.~Vellis.
\newblock Bi-{L}ipschitz geometry of quasiconformal trees.
\newblock {\em Illinois J. Math.}, 66(2):189--244, 2022.

\bibitem{Hei:03}
J.~Heinonen.
\newblock {\em Geometric embeddings of metric spaces}, volume~90 of {\em Report. University of Jyv\"{a}skyl\"{a} Department of Mathematics and Statistics}.
\newblock University of Jyv\"{a}skyl\"{a}, Jyv\"{a}skyl\"{a}, 2003.

\bibitem{LP:01}
U.~Lang and C.~Plaut.
\newblock Bilipschitz embeddings of metric spaces into space forms.
\newblock {\em Geom. Dedicata}, 87(1-3):285--307, 2001.

\bibitem{LLR:95}
N.~Linial, E.~London, and Y.~Rabinovich.
\newblock The geometry of graphs and some of its algorithmic applications.
\newblock {\em Combinatorica}, 15(2):215--245, 1995.

\bibitem{LinMag:00}
N.~Linial and A.~Magen.
\newblock Least-distortion {E}uclidean embeddings of graphs: products of cycles and expanders.
\newblock {\em J. Combin. Theory Ser. B}, 79(2):157--171, 2000.

\bibitem{Mat:02}
J.~Matou\v{s}ek.
\newblock {\em Lectures on discrete geometry}, volume 212 of {\em Graduate Texts in Mathematics}.
\newblock Springer-Verlag, New York, 2002.

\bibitem{NN:12}
A.~Naor and O.~Neiman.
\newblock Assouad's theorem with dimension independent of the snowflaking.
\newblock {\em Rev. Mat. Iberoam.}, 28(4):1123--1142, 2012.

\bibitem{NR:23}
D.~Ntalampekos and M.~Romney.
\newblock Polyhedral approximation of metric surfaces and applications to uniformization.
\newblock {\em Duke Math. J.}, 172(9):1673--1734, 2023.

\bibitem{Ost:13}
M.~I. Ostrovskii.
\newblock {\em Metric embeddings}, volume~49 of {\em De Gruyter Studies in Mathematics}.
\newblock De Gruyter, Berlin, 2013.
\newblock Bilipschitz and coarse embeddings into Banach spaces.

\bibitem{Sem:99}
S.~Semmes.
\newblock Bilipschitz embeddings of metric spaces into {E}uclidean spaces.
\newblock {\em Publ. Mat.}, 43(2):571--653, 1999.

\end{thebibliography}

%{\small \mbox{ } 

%\noindent Department of Mathematical Sciences, Stevens Institute of Technology, 1 Castle Point Terrace, Hoboken NJ 07030, USA.

%\noindent \textit{Email address:} xluo15@stevens.edu, mromney@stevens.edu, atao@stevens.edu}

\end{document}